\newtheorem{conj}{Conjecture}[section]
\newtheorem{thm}{Theorem}[section]
\newtheorem{remark}[conj]{Remark}
\newtheorem{lem}[conj]{Lemma}
\newtheorem{defn}[conj]{Definition}
\newtheorem{cor}[conj]{Corollary}
\newtheorem{ex}[conj]{Example}
\newcommand\independent{\protect\mathpalette{\protect\independent}{\perp}} 
\def\independent#1#2{\mathrel{\rlap{$#1#2$}\mkern2mu{#1#2}}}
\newcommand{\rank}{\mathrm{rank}}
\newcommand{\bc}{\begin{center}}
\newcommand{\ec}{\end{center}}
\newcommand{\bt}{\begin{tabular}}
\newcommand{\et}{\end{tabular}} 
\newcommand{\bea}{\begin{eqnarray}}
\newcommand{\eea}{\end{eqnarray}}
\newcommand{\bean}{\begin{eqnarray*}}
\newcommand{\eean}{\end{eqnarray*}}
\newcommand{\ba}{\begin{array}}
\newcommand{\ea}{\end{array}}
\def\be{\begin{eqnarray}}
\def\ee{\end{eqnarray}}
\def\ben{\begin{eqnarray*}}
\def\een{\end{eqnarray*}}
\newcommand{\RL}{{\mathbb R}}
\newcommand{\esssup}{\mathop{\rm ess\, sup}}
\def\elabel#1{\label{e:#1}}
\def\sq{$\Box$}
\def\qed{\ifmmode\sq\else{\unskip\nobreak\hfil
\penalty50\hskip1em\null\nobreak\hfil\sq
\parfillskip=0pt\finalhyphendemerits=0\endgraf}\fi\par\medbreak}
\def\rank{{\rm rank\,}}
\newsavebox{\junk}
\savebox{\junk}[1.6mm]{\hbox{$|\!|\!|$}}
\def\limsup{\mathop{\rm lim\ sup}}
\def\liminf{\mathop{\rm lim\ inf}}
\def\til={{\widetilde =}}
\def\half{{\mathchoice{\textstyle \frac{1}{2}}%
{\frac{1}{2}}%
{\hbox{\tiny $\frac{1}{2}$}}%
{\hbox{\tiny $\frac{1}{2}$}} }}
 \def\eq#1/{(\ref{#1})}
\def\eq#1/{(\ref{e:#1})}
\newcommand{\beqn}[1]{\notes{#1}%
\begin{eqnarray} \elabel{#1}}
\newcommand{\eeqn}{\end{eqnarray} }
\newcommand{\beq}[1]{\notes{#1}%
\begin{equation}\elabel{#1}}
\newcommand{\eeq}{\end{equation}} 
\def\bdes{\begin{description}}
\def\edes{\end{description}}
\def\notes#1{}
\def\phi{\varphi}
\def\bee{\begin{eqnarray*}}
\def\ene{\end{eqnarray*}}
\begin{document}


\title{Rogozin's convolution inequality for locally compact groups}
\author{Mokshay Madiman, James Melbourne, Peng Xu\thanks{All the authors are with the 
Department of Mathematical Sciences, University of Delaware.
E-mail: {\tt madiman@udel.edu, jamesm@udel.edu, xpeng@udel.edu}.
This work was supported in part by the U.S. National Science Foundation through grants DMS-1409504 (CAREER) and CCF-1346564.}}
\date{}
\maketitle

\begin{abstract}
General extensions of an inequality due to Rogozin, concerning the essential supremum of a convolution of probability density functions on the real line, 
are obtained.  While a weak version of the inequality is proved in the very general context of Polish $\sigma$-compact groups, 
particular attention is paid to the group \(\mathbb{R}^d\), where the result can combined with rearrangement inequalities for 
certain linear images for a strong generalization. As a consequence, we obtain a unification and sharpening of both the 
\(\infty\)-Renyi entropy power inequality for sums of independent random vectors, due to Bobkov and Chistyakov, 
and the bounds on marginals of projections of product measures due to Rudelson and Vershynin (matching and extending the sharp improvement
of  Livshyts, Paouris and Pivovarov). The proof is elementary and relies on a characterization of extreme points of a class of 
probability measures in the general setting of Polish measure spaces, as well as the development of a generalization of 
Ball's cube slicing bounds for products of \(d\)-dimensional Euclidean balls (where the ``co-dimension 1" case had been recently settled by Brzezinski).
\end{abstract}

\tableofcontents

\newpage




\section{Introduction}
\label{sec:intro}

Throughout we will reserve $E$ to denote a Polish measure space with its Borel $\sigma$-algebra, which admits a 
regular\footnote{By this we mean that Borel sets can be approximated in measure both by compact sets from within, and by surrounding open sets.}
$\sigma$-finite measure $\alpha$. Fix this measure, which we will call the 
{\it reference measure} on $E$. The law\footnote{By this we mean $\mathbb{P}(X \in A) = \mu(A)$} of a given $E$-valued random variable $X$ defines a Borel probability measure $\mu$ on $E$.  We denote by $\mathcal{P}(E)$ the space of all such $\mu$, and give the space the topology induced by weak-* convergence, and more generally for measurable $S \subseteq E$, $\mathcal{P}(S) = \{ \mu \in \mathcal{P}(E) : supp(\mu) \subseteq S \}$. We define $M: \mathcal{P}(E) \to [0,\infty]$ for by 
\ben
	M(\mu) = \sup \left\{\frac{\mu(A)}{\alpha(A)}: \alpha(A)>0 \right\} .
\een
When $X$ is a random variable distributed according to $\mu$, we will often use $M(X)$ in place of $M(\mu)$.
We will write $\mathcal{P}_m(E) = M^{-1}([0,m])$, and for $S \subseteq E$, $\mathcal{P}_m(S)$ will denote $\{ \mu \in \mathcal{P}_m(E) : supp(\mu) \subseteq S \}$.
If $M(X)$ is finite, the Radon-Nikodym theorem guarantees the existence of a density $f$ with respect to $\alpha$, in which case
\ben
M(X) =    \esssup_x f(x)=\|f\|_{L^\infty(E)}.
\een

Often we will take $E$ to be a locally compact group, denoted $G$ in this case, and the reference measure $\alpha$ to be 
the left Haar measure.  
Our first main result is cast in the case $G=\mathbb{R}^d$, with the 
reference measure being Lebesgue measure. 

\begin{thm}\label{thm:Generalized infinity EPI}
	For a random variable $X = (X_1, \dots X_n)$, composed of $X_i$ independent $\mathbb{R}^d$ valued vectors, $P$ a projection matrix from $\mathbb{R}^n$ to a $k$ dimensional subspace, $I_d$ the $d$-dimensional identity matrix, and $P \otimes I_d$ denoting their Kronecker product, there exists $\gamma_i \in [0,1]$  dependent only on $P$ such that $\sum_{i=1}^n \gamma_i = k$ and
	\[
	M(P \otimes I_{d}(X)) \leq c(d,k) \prod_{i=1}^n M^{\gamma_i}(X_i)
	\]
	where 
	\begin{equation*}
	c(d,k) = \begin{cases}
	\min\left\{\left(\frac{(1 + \frac d 2 )^{\frac{d}{2}}}{\Gamma(1 + \frac d 2)} \right)^k , \left(\frac{n} {n-k} \right)^{\frac{d(n-k)}{2}} \right\} &\text{$d \geq 2$} \\
	\min \left\{2^{\frac k 2}, \left(\frac{n}{n-k} \right)^{\frac{n-k}{2}}\right\} &\text{$d=1$}.
	\end{cases}
	\end{equation*}
\end{thm}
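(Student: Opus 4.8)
The plan is to strip the problem, by two soft reductions, down to a section inequality for products of $d$-dimensional Euclidean balls, and then to bound that section volume in two complementary ways --- one sharp in the number $k$ of linear constraints, the other in the complementary dimension $n-k$ --- keeping the better constant. \emph{Step 1 (reduction to extreme points).} Write $T=P\otimes I_d$, so that $\ker T=(\ker P)\otimes\mathbb{R}^d$ has dimension $(n-k)d$, and let $\mu_i$, with density $f_i$, be the law of $X_i$. The density of the pushforward of $\mu_1\otimes\cdots\otimes\mu_n$ under $T$, taken on the range of $T$, is $y\mapsto\int_{T^{-1}(y)}\prod_i f_i\rd\mathcal{H}^{(n-k)d}$, which is multilinear in $(\mu_1,\dots,\mu_n)$. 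Hence, with $m_i:=M(X_i)$ fixed, the functional $(\mu_i)\mapsto M\big(T_{\#}(\mu_1\otimes\cdots\otimes\mu_n)\big)$ is, in each argument $\mu_i\in\mathcal{P}_{m_i}(\mathbb{R}^d)$ separately, a supremum of affine functionals, hence convex and weak-$*$ lower semicontinuous. By the characterization of the extreme points of $\mathcal{P}_m$ together with a Choquet-type barycentric decomposition applied one coordinate at a time, I may therefore assume each $X_i$ is uniform on a set $A_i\subset\mathbb{R}^d$ with $\vol(A_i)=1/m_i$.

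\emph{Step 2 (rearrangement to balls; reduction to a section bound).} Fixing an orthonormal basis $v^{(1)},\dots,v^{(n-k)}$ of $\ker P$ identifies $\ker T$ with $(\mathbb{R}^d)^{n-k}$ via $x_i=\sum_{\ell} v^{(\ell)}_i w_\ell$ (with $v^{(\ell)}_i$ the $i$-th coordinate of $v^{(\ell)}$ and $w_\ell\in\mathbb{R}^d$), so the pushforward density at $y$ equals $\big(\prod_i\vol(A_i)\big)^{-1}\int_{(\mathbb{R}^d)^{n-k}}\prod_{i=1}^{n}\mathbf 1_{A_i}\big(c_i+\sum_{\ell} v^{(\ell)}_i w_\ell\big)\rd w$ for a suitable shift $c=(c_i)$, and the integral is exactly the object controlled by the multidimensional Brascamp--Lieb--Luttinger rearrangement inequality. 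Replacing each $A_i$ by the centered ball $r_iB_d$ of the same volume ($\omega_d r_i^d=\vol(A_i)$, $\omega_d=\pi^{d/2}/\Gamma(1+d/2)$) only increases that integral (translation invariance of the symmetric decreasing rearrangement absorbs the shift $c$), whence
\[
M(T(X))\ \le\ \frac{\vol_{(n-k)d}\!\left((\ker P\otimes\mathbb{R}^d)\cap\prod_{i=1}^{n}(r_iB_d)\right)}{\omega_d^{\,n}\prod_{i=1}^{n} r_i^{\,d}}\,,\qquad m_i=\frac{1}{\omega_d r_i^{\,d}}\,.
\]
Now set $\gamma_i:=P_{ii}$, the diagonal entries of the orthogonal projection $P$; then $\gamma_i\in[0,1]$ and $\sum_i\gamma_i=\tr P=k$, and $\gamma_i$ depends only on $P$. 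Since $\omega_d^{\,n}\prod_i r_i^{\,d}=\prod_i m_i^{-1}$ and $\prod_i m_i^{-(1-\gamma_i)}=\omega_d^{\,n-k}\prod_i r_i^{\,d(1-\gamma_i)}$, the theorem reduces to the section inequality
\[
\vol_{(n-k)d}\!\left((\ker P\otimes\mathbb{R}^d)\cap\prod_{i=1}^{n}(r_iB_d)\right)\ \le\ c(d,k)\,\omega_d^{\,n-k}\prod_{i=1}^{n} r_i^{\,d(1-\gamma_i)}\,.
\]

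\emph{Step 3 (two section bounds).} For the first constant I will prove a generalization of Ball's cube-slicing inequality: for every codimension-$k$ subspace $V\subset\mathbb{R}^n$,
\[
\vol_{(n-k)d}\!\left((V\otimes\mathbb{R}^d)\cap\prod_{i=1}^{n}(r_iB_d)\right)\ \le\ \left(\frac{(1+d/2)^{d/2}}{\Gamma(1+d/2)}\right)^{\!k}\omega_d^{\,n-k}\prod_{i=1}^{n} r_i^{\,d(1-\gamma_i)}\,,
\]
with the sharp value $2^{k/2}$ (and $\omega_1=2$) replacing the bracket when $d=1$. Following Ball's method, one rewrites this section volume by Parseval as an integral over the dual subspace of a product of rescaled Bessel functions --- the Fourier transforms of the indicators $\mathbf 1_{r_iB_d}$ --- and estimates it via a H\"older inequality organized around the ``$(1+d/2)$-uniform cover'' furnished by the rows of $V$; the codimension-one case is Brzezinski's recent theorem, and the general-codimension version is the new geometric ingredient. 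For the second constant I will establish, by a direct integral-geometric estimate for sections of products of $d$-balls by subspaces of the small dimension $n-k$ --- in the extreme case $\dim V=1$ this is simply $\min_i(r_i/|a_i|)^{d}\le\prod_i(r_i/|a_i|)^{d a_i^{2}}$ (weights $a_i^2$ summing to $1$) optimized over unit vectors $a$, which yields precisely $n^{d/2}$, and in general it is the sharp form of the Rudelson--Vershynin bound due to Livshyts--Paouris--Pivovarov, extended to $d\ge 2$ --- the complementary estimate with constant $\big(n/(n-k)\big)^{d(n-k)/2}$. Taking the smaller of the two constants gives $c(d,k)$; specializing $P$ to the projection onto the diagonal with $k=1$ recovers the $\infty$-R\'enyi entropy power inequality of Bobkov--Chistyakov, and letting $P$ be arbitrary recovers the Rudelson--Vershynin marginal bounds, which is the advertised unification.

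\emph{Main obstacle.} Steps 1 and 2 are soft. The crux is the general-codimension Ball-type slicing inequality for products of Euclidean balls with the stated sharp constant: Ball's Fourier-analytic argument is already delicate for hyperplane sections of a single cube, and carrying it out simultaneously for arbitrary codimension and for products of $d$-balls requires the multidimensional uniform-cover machinery together with tight $L^p$-bounds on Bessel functions, so as not to lose the constant. I expect essentially all of the real work to lie there.
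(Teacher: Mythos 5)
Your roadmap is exactly the paper's: (i) reduce, via the extreme-point characterization of $\mathcal{P}_m$ and multilinearity of the pushforward functional, to $X_i$ uniform on sets; (ii) use the Brascamp--Lieb--Luttinger rearrangement inequality (together with the coarea/pushforward formula) to replace those sets by centered Euclidean balls; (iii) establish the two complementary section bounds for products of balls and take the minimum. Steps 1 and 2 are correct and match Theorems~\ref{thm:MainLC} and~\ref{thm:Mainviarearr}. Your choice $\gamma_i=P_{ii}$ is also the right one --- it coincides with the Brascamp--Lieb weights $|Pe_i|^2$ that actually appear, and the same $\gamma_i$ serves both constants.

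The gap is in Step 3, which you acknowledge is where the real work lies, but you do not identify the specific obstruction or the mechanism used to overcome it. Naming what is missing: the first constant is obtained by applying the \emph{geometric Brascamp--Lieb inequality} (not a bare H\"older inequality, and not organized around a ``$(1+d/2)$-uniform cover'' --- that number comes from the variance of the uniform law on the ball, not from a covering structure) on the Fourier side, using the rank-one decomposition $P=\sum_j a_j^2\,u_ju_j^T$ with $a_j=|Pe_j|$, so the $j$-th factor gets H\"older exponent $p_j=1/a_j^2$. Brzezinski's sharp Bessel-transform estimate $\int|\phi_d|^p\le c(d)\,p^{-d/2}$ is only valid for $p\ge 2$, so this argument requires $a_j^2=P_{jj}\le 1/2$ for all $j$, i.e.\ that no unit vector normal to $\ker P$ has a coordinate exceeding $1/\sqrt2$. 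You never address what to do when some $P_{jj}>1/2$. The paper handles this by an induction on $k$: if some normal direction has a dominant coordinate, one projects out that coordinate using a $d$-dimensional extension of the Livshyts--Paouris--Pivovarov projection lemma (their Lemma~3.1), which reduces $n$ by one and $k$ by one, and the factor $2^{d/2}$ picked up in that step is dominated by $c_1(d,k)/c_1(d,k-1)$. Without this case split and the projection lemma, the Fourier argument simply cannot be carried out for a general projection. Similarly, for the second constant you verify only the extreme case $\dim\ker P=1$ and then cite the general case; the paper proves it by applying the geometric Brascamp--Lieb inequality directly (not on the Fourier side) to the section integral with weights $|{\tilde P}e_i|^2$ and then using that $\prod_i (c_i^2)^{c_i^2}$ is minimized when the $c_i^2$ are equal. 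These are the genuinely new ingredients and they need to be supplied, not sketched.
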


Let us discuss some special cases and consequences.  When $k=1$, the result should be compared to the following result due to Bobkov and Chistyakov.

\begin{thm}\label{thm:bc14}\cite{BC14}
	When $X_1, \cdots, X_n$ are independent random vectors in $\mathbb{R}^d$,
	\begin{equation}\label{eq:Bobkov and Chistyakov}
		M^{-\frac 2 d}(X_1 + \cdots + X_n) \geq \frac 1 e \sum_{i=1}^n M^{-\frac 2 d} (X_i).
	\end{equation}
\end{thm}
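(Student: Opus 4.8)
The plan is to obtain Theorem~\ref{thm:bc14} as a corollary of Theorem~\ref{thm:Generalized infinity EPI} in the case $k=1$, using only the elementary scaling identity $M(cX)=c^{-d}M(X)$ for an $\mathbb{R}^d$-valued vector $X$ and $c>0$ (immediate from $f_{cX}(y)=c^{-d}f_X(y/c)$) together with a single scalar estimate on the Gamma function. A few harmless reductions come first. The case $n=1$ is trivial, so assume $n\ge2$; and we may assume every $M(X_i)$ is finite (it is automatically positive), since if $M(X_i)=\infty$ for the indices outside a nonempty set $F$, then convolution with a probability measure cannot increase the supremum of a density, so $M(X_1+\cdots+X_n)\le M\big(\sum_{j\in F}X_j\big)$, and it suffices to prove the inequality for the subfamily $\{X_j\}_{j\in F}$, the omitted terms contributing $0$ to the right-hand side; if $F$ is empty or a singleton the claim is immediate.

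Write $M_i:=M(X_i)$. The role of the rescaling below is to reshape the bound: applied to $X_1+\cdots+X_n$ through the symmetric rank-one projection, Theorem~\ref{thm:Generalized infinity EPI} would only produce a bound involving $\prod_i M_i^{1/n}$, which is not of the form we want (after AM--GM it is strictly weaker than the target). Instead, I would put $a_i:=M_i^{-1/d}>0$ and $Y_i:=M_i^{1/d}X_i$, so that $M(Y_i)=1$ for every $i$ and $S_n:=X_1+\cdots+X_n=\sum_{i=1}^n a_iY_i$. Let $a=(a_1,\dots,a_n)$, $u=a/\|a\|$, and let $P=uu^{T}$ be the rank-one orthogonal projection of $\mathbb{R}^n$ onto $\mathrm{span}(u)$. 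Under the isometric identification of the range of $P\otimes I_d$ with $\mathbb{R}^d$, the vector $(P\otimes I_d)(Y_1,\dots,Y_n)$ corresponds to $\sum_i u_iY_i=\|a\|^{-1}S_n$, so by the scaling identity $M\big((P\otimes I_d)(Y)\big)=\|a\|^{d}M(S_n)$. Applying Theorem~\ref{thm:Generalized infinity EPI} with $k=1$ to $Y_1,\dots,Y_n$ and $P$ yields weights $\gamma_i\in[0,1]$ with $\sum_i\gamma_i=1$ such that
\[
\|a\|^{d}M(S_n)=M\big((P\otimes I_d)(Y)\big)\le c(d,1)\prod_{i=1}^nM^{\gamma_i}(Y_i)=c(d,1),
\]
the last equality because $M(Y_i)=1$ for all $i$, so the precise weights never enter. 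Since $\|a\|^2=\sum_iM_i^{-2/d}$, this rearranges (raising to the negative power $-2/d$) to $M^{-2/d}(S_n)\ge c(d,1)^{-2/d}\sum_{i=1}^nM^{-2/d}(X_i)$.

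To finish, one checks $c(d,1)^{-2/d}\ge1/e$, i.e.\ $c(d,1)\le e^{d/2}$; the inequality will be strict, so the argument in fact sharpens Theorem~\ref{thm:bc14}. For $d=1$ this is just $\sqrt2\le\sqrt e$. For $d\ge2$, bounding $c(d,1)$ by the first branch of the minimum in its definition, it suffices to show $(1+d/2)^{d/2}\le e^{d/2}\Gamma(1+d/2)$, i.e.\ $\Gamma(s)\ge(s/e)^{s-1}$ for $s:=1+d/2\ge2$ --- a standard Gamma-function estimate, provable by induction from $\Gamma(s)=(s-1)\Gamma(s-1)$ together with $(1-1/m)^{m-1}>1/e$ for $m>1$ (with the base values $s=2$ and $s=5/2$ checked by hand).

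Given Theorem~\ref{thm:Generalized infinity EPI}, there is no real obstacle in this deduction: it is a rescaling plus the one scalar inequality $c(d,1)\le e^{d/2}$, and all of the substance lies in Theorem~\ref{thm:Generalized infinity EPI} itself and, upstream of it, in the extreme-point characterization and the generalized Ball cube-slicing bound described in the introduction. The two steps that deserve a little care are the reduction to finite $M(X_i)$ and the Gamma-function estimate, the latter precisely because it is what pins the constant down to $1/e$ rather than something slightly larger.
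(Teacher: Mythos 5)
Your proposal is correct and follows the same route as the paper's own sketch for Theorem~\ref{thm:bc14}: rescale so that each normalized summand has maximal density $1$, realize the sum (up to isometry and dilation) as the rank-one projection $P_\theta\otimes I_d$ of the normalized vector, and apply Theorem~\ref{thm:Generalized infinity EPI} with $k=1$, at which point the weights $\gamma_i$ drop out because every $M(Y_i)=1$. The two places you elaborate beyond the paper's sketch --- the reduction to finite $M(X_i)$ via the convolution bound $M(X+Y)\le M(X)$, and the replacement of the paper's Stirling remark by the explicit estimate $\Gamma(s)\ge(s/e)^{s-1}$ proved by induction --- are both correct and simply make the argument self-contained.
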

Indeed, let us briefly sketch the relationship.  Assume without loss of generality that the $X_i$ are bounded and write
\[
	X_1 + \cdots + X_n = k (\theta_1 \tilde{X}_1 + \cdots + \theta_n \tilde{X}_n ),
\]
with $\tilde{X}_i = M^{-\frac 1 d}(X_i)X_i$, and $\theta_i = M^{-\frac 1 d}(X_i)/ \sqrt{ \sum_j M^{- \frac 2 d}(X_j)}$, so that $\theta$ is a unit vector, $M(\tilde{X}_i)=1$ and consequently
\[
	k = \sqrt{ \sum M^{-\frac 2 d}(X_j)}.
\] 
Taking maximums, we compute.
\begin{align*}
	M(X_1 + \cdots + X_n) 	
		&=
			k^{- d} M(\theta_1 \tilde{X}_1 + \cdots + \theta_n \tilde{X}_n)
				\\
		&=
			k^{- d} M(P_\theta \otimes I_{d}(\tilde{X})),
\end{align*}
where $\tilde{X} = (\tilde{X}_1, \dots, \tilde{X}_n)$, and $P_\theta$ denotes the projection matrix to the vector $\theta$.  Applying Theorem \ref{thm:Generalized infinity EPI}, and unwinding a small bit of algebra we arrive back at equation \eqref{eq:Bobkov and Chistyakov}
\[
	M^{-\frac 2 d}(X_1+ \cdots + X_n) \geq c(d,1)^{-\frac 2 d} \sum_{i=1}^n M^{-\frac 2 d} (X_i),
\]
The constant can be written explicitly as $\Gamma^{\frac 2 d}(1 + \frac d 2)/(1 + \frac d 2)$, which using Stirling's approximation can be shown to decrease with $d$ to $\frac 1 e$.  Independent of the number of summands, these constants will be shown optimal.

In information theory, the number $h_{\infty}(X)=-\log M(X)$ (defined to be $-\infty$ in the case that $M(X)$ is infinite) 
is called the $\infty$-R\'enyi entropy (or {\it R\'enyi entropy of infinite order})
of $X$, and the number $N_\infty(X) = e^{2h_{\infty}(X)/d}= M(X)^{-2/d}$ is called the {\it $\infty$-R\'enyi entropy power}.  
The $\infty$-R\'enyi entropy is one instance of a one-parameter family of
R\'enyi entropies $h_p(X)$ for $p\in [0,\infty]$, which include the Shannon-Boltzmann
entropy $h_1(X)=-\int_E f(x)\log f(x) \alpha(dx)$.
Many of the results within this paper can be re-interpreted in this language.  

Inequalities for the one-parameter family of R\'enyi entropies 
play an important role in several areas of mathematics including
probability \cite{Sta59, Bar86, ABBN04:1, Yu09:1, JKM13, BJKM10}, 
information theory \cite{Sha48, Ber74, Cos85a},  
additive combinatorics \cite{MMT12, Tao10, MK17, Hoc14:1}, and
convex geometry \cite{LYZ04, BM11:cras, BM13:goetze, FMW16}; 
this literature restricted to the Euclidean setting is surveyed and extended in \cite{MMX17:0}.
Furthermore, inequalities for the $\infty$-R\'enyi entropy (particularly as applied to sums of random variables taking values in a group)
in particular are closely related to the study of small ball probabilities \cite{LS01, NV13, EGZ15}, which is important
both from the viewpoint of probability and additive combinatorics,
and is closely connected to recent developments in random matrix theory.

Theorem~\ref{thm:Generalized infinity EPI} will allow us to prove the $\infty$-entropy power inequality
\be\label{inq:Main}
N_\infty\left(\sum_{i=1}^nX_i\right)
\ge \frac{\Gamma^{\frac 2 d}\left(\frac{d}{2} + 1 \right)}{\frac d 2+1}\sum_{i=1}^nN_\infty(X_i),
\ee
which clearly sharpens Theorem~\ref{thm:bc14}.
Further connections of the results here with Information Theory are developed in \cite{XMM17:isit:1, XMM17:isit:2}.

In a different direction, important recent developments in
random matrix theory \cite{RV09, TV09:1, RV10:icm} motivated Rudelson and Vershynin's investigation into small ball 
properties of linear images of high-dimensional distributions.  To this end, the main result of \cite{RV15} was the following.

\begin{thm}\label{thm:RV}\cite{RV15}
	When $X = (X_1, \dots, X_n)$ is composed of $\mathbb{R}$ valued independent random variables with $M(X_i) \leq L$ and $P$ a projection matrix to a $k$ dimensional subspace, 
	\[
	M(PX) \leq (C L)^k
	\]
	for some $C>0$.
\end{thm}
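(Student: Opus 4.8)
The plan is to obtain Theorem~\ref{thm:RV} as an immediate specialization of Theorem~\ref{thm:Generalized infinity EPI} to $d=1$, and then to recall the strategy that proves the latter, since that is where the content lies. Given $X=(X_1,\dots,X_n)$ with $M(X_i)\le L$ and a rank-$k$ projection $P$, Theorem~\ref{thm:Generalized infinity EPI} with $d=1$ produces weights $\gamma_i\in[0,1]$, depending only on $P$, with $\sum_i\gamma_i=k$ and
\[
M(PX)=M(P\otimes I_1(X))\le c(1,k)\prod_{i=1}^n M^{\gamma_i}(X_i)\le c(1,k)\,L^{\sum_i\gamma_i}=c(1,k)\,L^k.
\]
Since $c(1,k)=\min\{2^{k/2},(n/(n-k))^{(n-k)/2}\}\le 2^{k/2}$, this yields the assertion with $C=\sqrt2$, and the $\min$ even shows $C$ may be taken to be an absolute constant that does not degrade as $n$ and $k$ grow, which is the sharpening emphasized in the introduction. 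So the real task is to prove Theorem~\ref{thm:Generalized infinity EPI}, which I would carry out in three steps.

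First I would reduce to extreme points. Fix $m_i=M(X_i)$ and view $\mu\mapsto M\big(\text{pushforward of }\mu\text{ under }P\otimes I_d\big)$ as a function on the product $\prod_i\mathcal{P}_{m_i}(\mathbb{R}^d)$. Holding all but the $i$-th factor fixed, $M$ of the pushforward is a supremum of affine functionals of $\mu_i$, hence convex; since $\mathcal{P}_{m_i}(\mathbb{R}^d)$ is convex and (after a truncation/compactness argument) weak-$*$ compact, the maximum is attained at an extreme point. I would separately prove the characterization of extreme points of $\mathcal{P}_m$ on a Polish measure space: these are exactly the uniform distributions on Borel sets of reference measure $1/m$. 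Iterating over $i$ reduces the problem to the case where each $X_i$ is uniform on a set $A_i\subseteq\mathbb{R}^d$ of volume $1/m_i$, i.e.\ where $X$ is uniform on the product $A_1\times\cdots\times A_n$.

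Next comes rearrangement. Because $M(PX)$ is translation invariant, for any target point I translate to place it at the origin; the density there is, up to normalization, an integral $\int\prod_i\mathbf{1}_{A_i}(x_i)\,\delta^{(kd)}\big((P\otimes I_d)x\big)\,dx$, a form covered by a Brascamp--Lieb--Luttinger type rearrangement inequality for linear images (the block structure of $P\otimes I_d$ keeps everything at the level of the $\mathbb{R}^d$-factors). Replacing each $A_i$ by the centered Euclidean ball $B_i\subseteq\mathbb{R}^d$ of equal volume therefore only increases $M(P\otimes I_d(X))$, and for the symmetrized configuration the maximizing point is the origin. One is left with the purely geometric quantity
\[
\frac{\mathrm{vol}_{(n-k)d}\big((B_1\times\cdots\times B_n)\cap\ker(P\otimes I_d)\big)}{\prod_i\mathrm{vol}_d(B_i)},
\]
to be bounded by $c(d,k)\prod_i M^{\gamma_i}(X_i)$, i.e.\ (by homogeneity) a central-slicing estimate for products of $d$-dimensional balls.

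Finally, the slicing bound. For $d=1$ this is exactly Ball's cube-slicing inequality and its codimension-$k$ extension, producing the factor $2^{k/2}$, with the $\gamma_i$ essentially the squared coordinates of the normal directions of $\ker P$; the competing $(n/(n-k))^{d(n-k)/2}$ term comes from a cruder Hölder-type estimate that wins when $k$ is close to $n$. For $d\ge2$ one must push Ball's $L^p$/Fourier argument through for products of $d$-balls: the codimension-$1$ case is Brzezinski's theorem, and the general-codimension statement, with the clean constant $\big((1+\tfrac d2)^{d/2}/\Gamma(1+\tfrac d2)\big)^k$, is the new geometric input. I expect this last step to be the main obstacle: Ball's proof rests on sharp estimates for $L^p$ norms of the Fourier transform of an interval indicator (the $\operatorname{sinc}$ function), and passing to balls forces control of $L^p$ norms of Bessel-type transforms of ball indicators and an optimization of the exponent against the codimension so that the constant collapses to the stated closed form; verifying that this is precisely the quantity surviving the extreme-point and rearrangement reductions is the delicate part. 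Everything else — compactness, convexity, the extreme-point characterization, and the $k=1$ specialization giving the $\infty$-entropy power inequality \eqref{inq:Main} — is comparatively routine.
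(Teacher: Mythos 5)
Your derivation of Theorem~\ref{thm:RV} as the $d=1$ specialization of Theorem~\ref{thm:Generalized infinity EPI}, with $C=\sqrt{2}$ coming from $c(1,k)\le 2^{k/2}$, is exactly the relationship the paper asserts (the paper does not prove \ref{thm:RV} directly; it cites \cite{RV15} and notes that \ref{thm:Generalized infinity EPI} recovers the LPP sharpening), and your three-step outline of the proof of Theorem~\ref{thm:Generalized infinity EPI} --- reduction to extreme points via Krein--Milman, Brascamp--Lieb--Luttinger rearrangement to balls, and then a Ball/Brzezinski-type Fourier slicing estimate combined with a geometric Brascamp--Lieb inequality --- matches the paper's structure (Sections 2--4) faithfully. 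Two small imprecisions worth flagging: the Krein--Milman step requires not just convexity of $\mu\mapsto M$ of the pushforward but also its \emph{lower} semi-continuity (Lemma~\ref{lem:LSC}), which you don't mention, and the extreme points of $\mathcal{P}_m$ on a general Polish measure space carry an extra atom $c\,\mathbbm{1}_{\{x\}}$ (Lemma~\ref{lem:extremes}); your simpler description ``uniform on a set of $\alpha$-measure $1/m$'' is correct only because Lebesgue measure is non-atomic, which is the case you need.
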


This was optimally sharpened soon after by Livshyts, Paouris, and Pivovarov \cite{LPP15} and it is exactly this sharpening that we recover in 
Theorem \ref{thm:Generalized infinity EPI} when $d=1$.  Thus we can see Theorem \ref{thm:Generalized infinity EPI} as a 
generalization of both Theorems~\ref{thm:bc14} and \ref{thm:RV}, unifying the two perspectives, and 
including the optimal summand-independent sharpening of \eqref{eq:Bobkov and Chistyakov}.

We consider our methods to be of independent interest. Their origin may be traced to 
the 1987 paper of Rogozin \cite{Rog87:1}, where the following elegant result was obtained for the case $(E,\alpha) = (\mathbb{R}, dx)$. If 
$X_1, \dots, X_n$ are independent real-valued random variables,
and $U_1, \dots, U_n$ are independent random variables with uniform
distributions on intervals with widths determined by setting $M(U_i)=M(X_i)$,
then
\be\label{eq:rog}
M(X_1 + \cdots + X_n ) \leq M(U_1 + \cdots + U_n) .
\ee
The lack of a multidimensional analog of Rogozin's inequality \eqref{eq:rog} 
had been noted by a number of authors. Rudelson and Vershynin \cite{RV15} say, for instance:
``A natural strategy would be to extend to higher dimensions the simple one-dimensional
argument leading to Theorem 1.2, which was a combination of Ball's and
Rogozin's theorems. [{\color{red}...}] However, we are unaware of any higher-dimensional versions of
Rogozin's theorem.''\\


We present the following extension of Rogozin's theorem.

\begin{thm}\label{thm:GRGZ}
Let $X =(X_1, \dots, X_n)$ have components $X_i$ independent $\mathbb{R}^d$-valued random vectors,
and let $Z= (Z_1, \dots, Z_n)$ be comprised of $Z_i$ independent random vectors distributed uniformly on origin-symmetric Euclidean balls in $\mathbb{R}^d$ 
whose radii are determined by the equations $M(Z_i) = M(X_i)$ (with the understanding that $Z_i$
is a Dirac delta distribution at $0$ if $M(X_i)=\infty$). 
Then
\be\label{inq:GRGZ}
	M(T \otimes I_d (X) ) \leq M(T \otimes I_d(Z)) .
\ee
where $T: \mathbb{R}^n \to \mathbb{R}^k$ is a linear map, and $I_d$ is the identity map on $\mathbb{R}^d$.
\end{thm}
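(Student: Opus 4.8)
The plan is to reduce Theorem~\ref{thm:GRGZ} to an optimization over extreme points, combined with a rearrangement step. First I would observe that the quantity $M(T \otimes I_d(X))$ depends on the laws $\mu_i$ of the $X_i$ only through the product measure $\mu_1 \otimes \cdots \otimes \mu_n$ on $(\mathbb{R}^d)^n$, and that fixing $M(X_i) = m_i$ amounts to requiring $\mu_i \in \mathcal{P}_{m_i}(\mathbb{R}^d)$. So the left side is at most
\[
	\sup\Bigl\{ M\bigl(T \otimes I_d(X)\bigr) : \mu_i \in \mathcal{P}_{m_i}(\mathbb{R}^d), \ i = 1,\dots,n \Bigr\}.
\]
The key structural fact (the "characterization of extreme points of a class of probability measures in the general setting of Polish measure spaces" advertised in the abstract) is that the extreme points of $\mathcal{P}_m(E)$ are exactly the measures of the form $\frac{1}{m}\,\alpha|_A$ for Borel sets $A$ with $\alpha(A) = 1/m$ — i.e. normalized restrictions of the reference measure to sets of the appropriate volume. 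Granting this, I would argue that $\mu \mapsto M(T\otimes I_d(X))$, viewed as a function of each $\mu_i$ with the others held fixed, is convex and weak-* upper semicontinuous (it is a supremum of affine functionals $\mu_i \mapsto (\mu_1\otimes\cdots\otimes\mu_n)\bigl((T\otimes I_d)^{-1}(B)\bigr)/\mathrm{vol}(B)$ over balls $B$, up to the usual care about the essential supremum), so by Bauer's maximum principle the supremum over the (weak-* compact, convex) set $\mathcal{P}_{m_i}(\mathbb{R}^d)$ is attained at an extreme point. Iterating over $i = 1, \dots, n$, the supremum is attained (or approached) when each $\mu_i = m_i\,\mathbf{1}_{A_i}\,dx$ with $\mathrm{vol}(A_i) = 1/m_i$.

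Next comes the rearrangement step, which turns the combinatorial sets $A_i$ into Euclidean balls. Once each $X_i$ is uniform on a set $A_i \subseteq \mathbb{R}^d$ of volume $1/m_i$, I want to replace $A_i$ by the origin-centered ball $B_i$ of the same volume, which is exactly the law of $Z_i$ with $M(Z_i) = m_i = M(X_i)$. The relevant comparison is a Rogozin/Brascamp–Lieb–Luttinger-type rearrangement inequality applied to the functional $\mathrm{vol}\bigl((T\otimes I_d)^{-1}(B) \cap (A_1 \times \cdots \times A_n)\bigr)$: symmetrizing each factor $A_i$ to a ball can only increase the $M$-value of the linear image. Here I would invoke the $\mathbb{R}^d$ rearrangement machinery the paper says it relies on — a multidimensional Rogozin-type inequality for linear images, proved via Brascamp–Lieb–Luttinger together with the generalization of Ball's cube-slicing bound for products of Euclidean balls (the "co-dimension 1" case being Brzeziński's). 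Concretely: $M(T\otimes I_d(X))$ for $X$ uniform on $A_1\times\cdots\times A_n$ is the supremum over translates $v$ of a normalized volume of slices, and Brascamp–Lieb–Luttinger applied coordinatewise (in the $d$ copies of $\mathbb{R}^n$ threaded through by $T\otimes I_d$) shows this supremum does not decrease when each $A_i$ is replaced by its symmetric rearrangement $B_i$. This yields $M(T\otimes I_d(X)) \le M(T\otimes I_d(Z))$.

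I expect the main obstacle to be the rearrangement step rather than the extreme-point reduction. The extreme-point argument is soft functional analysis (Bauer's principle plus the Polish-space extreme-point characterization, which the paper establishes separately), and the passage to uniform-on-a-set laws is routine once that is in hand — one does have to be slightly careful that $M(T\otimes I_d(X))$ is genuinely weak-* upper semicontinuous and convex in each argument, and handle the degenerate cases $M(X_i) = \infty$ (where $Z_i = \delta_0$ and the factor effectively drops out, or rather the linear map collapses that coordinate). The genuinely hard part is proving the multidimensional rearrangement inequality with the \emph{correct} (sharp) behavior: Brascamp–Lieb–Luttinger handles symmetric-decreasing rearrangement of general functions, but applying it to characteristic functions of balls in $\mathbb{R}^d$ and extracting the essential supremum of the resulting convolution/slice functional requires the Ball-type slicing estimate for products of $d$-dimensional balls; getting that estimate (and confirming it degenerates correctly to Brzeziński's codimension-one result and to Ball's original cube slicing when $d=1$) is where the real work lies, and it is presumably also where the constant $c(d,k)$ in Theorem~\ref{thm:Generalized infinity EPI} ultimately comes from once one evaluates $M(T\otimes I_d(Z))$ explicitly.
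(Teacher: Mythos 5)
Your overall strategy matches the paper's: first reduce to uniform-on-a-set laws via the extreme-point characterization of $\mathcal{P}_m(\mathbb{R}^d)$, then replace each set by the origin-centered ball of the same volume via Brascamp--Lieb--Luttinger applied through the linear map $T\otimes I_d$. Two points need correcting, one substantive and one organizational.

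The substantive issue is the direction of semicontinuity. You assert that $\mu_i\mapsto M(T\otimes I_d(X))$ is weak-$*$ \emph{upper} semicontinuous because it is ``a supremum of affine functionals,'' and then invoke Bauer's maximum principle. But a supremum of continuous affine functionals is \emph{lower} semicontinuous, not upper, so your parenthetical contradicts your conclusion; and Bauer's principle requires upper semicontinuity to guarantee attainment at an extreme point. What actually holds here is lower semicontinuity of $M$ (the paper's Lemma~\ref{lem:LSC}), and the argument one needs is not attainment but the weaker claim that $\sup_{K}\Phi=\sup_{\mathcal{E}(K)}\Phi$ for $\Phi$ convex and lower semicontinuous on a compact convex $K$. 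This follows by combining Krein--Milman density of $\mathrm{co}(\mathcal{E}(K))$ with convexity to bound $\Phi$ on convex combinations, and then using lower semicontinuity on the limit — exactly as in the paper's Lemma~\ref{lem:MainLC}. Once you swap the semicontinuity direction and replace Bauer with this Krein--Milman-plus-lsc argument, the extreme-point reduction goes through. (You also need the approximation by compactly supported measures to pass from $\mathcal{P}_m(\mathbb{R}^d)$ to the compact $\mathcal{P}_m(K)$; the paper handles this in its ``beyond compactness'' theorem.)

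The organizational issue is that you fold the Ball/Brzezi\'nski cube-slicing estimates into the rearrangement step. They are not needed for Theorem~\ref{thm:GRGZ} at all. The comparison $M(T\otimes I_d(X))\le M(T\otimes I_d(Z))$ follows from the extreme-point reduction together with Brascamp--Lieb--Luttinger (after a translation to put the essential sup at the origin and a co-area/change-of-variables identity expressing the marginal density as an integral over $\ker T^{(d)}$ — the paper's Lemma~\ref{lem:pushforwardformula}). The slicing bounds only enter later, when one wants to \emph{evaluate} $M(T\otimes I_d(Z))$ explicitly to obtain the constants $c(d,k)$ in Theorem~\ref{thm:Generalized infinity EPI}; at that stage the paper uses a geometric Brascamp--Lieb inequality and Brzezi\'nski's Fourier estimate for the characteristic function of the uniform law on a ball, not a rearrangement argument.
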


As mentioned above, the map $T \otimes I_d$ is sometimes known as the Kronecker product of $T$ and $I_d$.  It can be understood, 
either by the tensor product identification of $(\mathbb{R}^{d})^n$ with $\mathbb{R}^n \otimes \RL^d$ 
via $(x_1,\cdots x_n) = \sum_{i=1}^n e_i \otimes x_i$, where $e_i$ denotes the standard basis for $\mathbb{R}^n$, 
or by the following formula.  Write $T$ with respect to the standard basis $\{e_i\}$ as $T_{ij} = e_i \cdot T e_j$, 
then $(T\otimes I_d(X))$ in $(\RL^d)^n$ has the coordinates 
\[
	(T\otimes I_d(x_1, \dots, x_n))_i = \sum_{j=1}^n T_{ij} x_j.
\]
We will make frequent enough use of these tensor maps that we employ the abbreviated notation $T^{(d)} = T \otimes I_d$.

In particular, when $T$ is a projection map to a one dimensional subspace, Theorem \ref{thm:GRGZ} 
asserts that the inequality \eqref{eq:rog} holds for random vectors $X_i$ in $\RL^d$, with each $U_i$
being uniformly distributed on a Euclidean ball with $M(U_i)=M(X_i)$.
After the first version of this paper was presented, we learnt from S. Bobkov that this particular
case had recently been obtained by Ju{\v{s}}kevi{\v{c}}ius and Lee \cite{JL15}, and we also learnt from
P. Pivovarov and G. Livshyts that it follows from stochastic ordering results obtained by Kanter \cite{Kan76:1, Kan77}.
Apart from extending Rogozin's theorem, this inequality is related to a family of inequalities for R\'enyi entropy
of arbitrary order $p$, conjectured by Wang and the first-named author \cite{WM14}, with a certain class 
of generalized Gaussian distributions indexed by a parameter $\beta_p$
being the conjectured extremizers. For $p=\infty$, the corresponding generalized Gaussian distribution
turns out to be the uniform distribution on a Euclidean ball. 
The inequality \eqref{eq:rog} for random vectors in $\RL^d$ confirms  \cite[Conjecture IV.3]{WM14} 
when $p=\infty$.

%

Theorem \ref{thm:GRGZ} is proven in two parts. The first reveals a very general phenomenon that 
does not require much of the structure present in our situation. 
\begin{thm}\label{thm:MainLC}
	Let $\{(E_i,\alpha_i)\}_{i=1}^n$, be Polish measure spaces with regular Borel measures $\alpha_i$.
For any coordinate-convex, lower semi-continuous (with respect to the product of weak-* topologies) function
	$
	\Phi: \prod_{i=1}^n \mathcal{P}_{m_i}(E_i) \to \mathbb{R},
	$
we have
	\begin{align}\label{inq:MainLC}
	\sup_{(\mu_1, \ldots, \mu_n)\in \prod_{i=1}^n \mathcal{P}_{m_i}(E_i) } \Phi(\mu_1, \ldots, \mu_n)  
	&= 
	\sup_{(\mu_1, \ldots, \mu_n) \in\prod_{i=1}^n \mathcal{P}_{m_i}^*(E_i)} \Phi(\mu_1, \ldots, \mu_n) ,
	\end{align}
where $\mathcal{P}_{m}^*(E)$ denotes the space of Dirac measures on $E$ if $m=\infty$,
and the space of measures $\mu$ with densities of the form
	\[
	\frac{d\mu}{d\alpha}(y) = m \mathbbm{1}_U(y) + c \mathbbm{1}_{\{x\}}(y) ,
	\]
for some $U$ contained in a compact subset of $E$, some $c \in [0,m)$ and some $x \notin U$, 
when $m$ is finite. 
\end{thm}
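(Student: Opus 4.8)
The plan is to strip off the coordinates one at a time and then, for a single coordinate, combine the Krein--Milman theorem with an explicit description of the extreme points of $\mathcal{P}_m(K)$ for $K$ compact. For the coordinate reduction, suppose the one-variable version of \eqref{inq:MainLC} is known; given $(\mu_1,\dots,\mu_n)$ with $\Phi(\mu_1,\dots,\mu_n)$ within $\varepsilon$ of the supremum, apply it to the convex, lower semicontinuous map $\nu\mapsto\Phi(\nu,\mu_2,\dots,\mu_n)$ (a slice of the jointly lsc, coordinate-convex $\Phi$) to replace $\mu_1$ by some $\mu_1^{*}\in\mathcal{P}_{m_1}^{*}(E_1)$ at the cost of another $\varepsilon$; then replace $\mu_2$, and so on. After $n$ steps all coordinates lie in $\mathcal{P}_{m_i}^{*}(E_i)$ and the value has dropped by at most $(n+1)\varepsilon$; letting $\varepsilon\downarrow0$ gives the general case. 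So it suffices to show: for convex, weak-* lower semicontinuous $g\colon\mathcal{P}_m(E)\to\mathbb{R}$ one has $\sup_{\mathcal{P}_m(E)}g=\sup_{\mathcal{P}_m^{*}(E)}g$ (only ``$\le$'' is nontrivial). When $m=\infty$, $\mathcal{P}_m^{*}(E)$ is the set of Diracs and this is immediate from Jensen's inequality: $g(\mu)=g\bigl(\int_E\delta_x\,d\mu(x)\bigr)\le\int_E g(\delta_x)\,d\mu(x)\le\sup_{x\in E}g(\delta_x)$.

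Fix $m<\infty$. The first step is reduction to compactly supported measures. Given $\mu\in\mathcal{P}_m(E)$, $\sigma$-finiteness and inner regularity of $\alpha$ make $\mu$ tight, so pick compact $K_j$ with $\mu(K_j)\uparrow1$. Choose Borel sets $W_j$ with compact closure, disjoint from $K_j$, and with $m\,\alpha(W_j)\ge1-\mu(K_j)$ (such $W_j$ exist unless $\alpha$ itself is carried by a compact set, in which case $\mu$ is already compactly supported), together with densities $0\le f_j\le m$ on $W_j$ satisfying $\int_{W_j}f_j\,d\alpha=1-\mu(K_j)$. Then $\mu_j:=\mu|_{K_j}+f_j\alpha\in\mathcal{P}_m(E)$ is compactly supported and $\mu_j\to\mu$ weak-*, so lower semicontinuity yields $g(\mu)\le\liminf_j g(\mu_j)\le\sup\{g(\nu):\nu\in\mathcal{P}_m(E),\ \operatorname{supp}\nu\text{ compact}\}$.

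Now assume $\operatorname{supp}\mu\subseteq K$ with $K$ compact; since $\mu\ll\alpha$ we have $\alpha(K)>0$, and $\mathcal{P}_m(K)$ is a nonempty, weak-* compact, convex, metrizable subset of the locally convex space of finite signed Borel measures on $K$, containing $\mu$. By the standard Hahn--Banach separation of each point of the graph of $g|_{\mathcal{P}_m(K)}$ from the closed convex epigraph, this restriction is the pointwise supremum of the weak-*-continuous affine functions $h$ that minorize it on $\mathcal{P}_m(K)$. For each such $h$, Krein--Milman gives $\mathcal{P}_m(K)=\overline{\operatorname{conv}}\bigl(\operatorname{ext}\mathcal{P}_m(K)\bigr)$, whence $h(\mu)\le\sup_{\mathcal{P}_m(K)}h=\sup_{\operatorname{ext}\mathcal{P}_m(K)}h\le\sup_{\operatorname{ext}\mathcal{P}_m(K)}g$; taking the supremum over $h$ gives $g(\mu)\le\sup_{\operatorname{ext}\mathcal{P}_m(K)}g$. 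It then remains to check that every extreme point of $\mathcal{P}_m(K)$ lies in $\mathcal{P}_m^{*}(E)$. This is the technical core: if the density $f=d\mu/d\alpha$ takes a value strictly between $0$ and $m$ on a set of positive $\alpha$-measure on which $\alpha$ is nonatomic, split that set into two pieces of equal positive measure and perturb $f$ by $\pm\varepsilon$ on them to realize $\mu$ as a proper midpoint; likewise, two $\alpha$-atoms carrying density in $(0,m)$ can be played against each other. Hence the only surviving freedom is a single partial atom, so $f=m\mathbbm{1}_U+c\mathbbm{1}_{\{x\}}$ with $U\subseteq K$, $c\in[0,m)$, $x\notin U$; conversely such measures are easily seen to be extreme, and they all belong to $\mathcal{P}_m^{*}(E)$.

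I expect the extreme-point characterization in the last step to be the main obstacle: carrying it out for a \emph{Polish} measure space with a possibly atomic reference measure requires care in isolating the single admissible partial atom and in verifying that no further degrees of freedom remain (the $\sigma$-finiteness and regularity of $\alpha$ are used throughout, e.g.\ to split nonatomic pieces into equal halves). The compact-support reduction is the other delicate point in the abstract setting, since truncating $\mu$ to a compact set must be compensated without breaching the constraint $M\le m$ — which is precisely why the approximants carry an auxiliary bounded density spread over a set of the appropriate $\alpha$-measure rather than a point mass.
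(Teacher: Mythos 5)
Your proof is correct and follows essentially the same route as the paper: reduce to compactly supported measures (by a weak-* approximation respecting the constraint $M\le m$), apply Krein--Milman to the compact convex set $\mathcal{P}_m(K)$, and characterize its extreme points as densities of the form $m\mathbbm{1}_U + c\mathbbm{1}_{\{x\}}$. Your compact-support approximation relocates the lost mass onto auxiliary disjoint compact sets $W_j$ of small $\alpha$-measure, whereas the paper caps the truncated density at $m-\tfrac1j$ and compensates on a fixed compact subset of $\{f\le m-\varepsilon_0\}$ inside the support; similarly, your coordinate-by-coordinate reduction and the passage through Hahn--Banach affine minorants are interchangeable variants of the paper's induction inside the compact lemma and its direct use of lower semicontinuity on nets in $\operatorname{co}\bigl(\mathcal{E}(\mathcal{P}_m(K))\bigr)$.
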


For example, we have the following weak Rogozin theorem for locally compact groups.  
Given a group $(G,\cdot)$, we will write $xy$ for $x \cdot y$, $e$ for the identity element of a group, and  $x^{-1}$ for the inverse of $x$ under the group operation.

\begin{thm}\label{cor:MainLC}
Suppose $(G,\cdot)$ is a Polish $\sigma$-locally compact group with a non-atomic left Haar measure $\alpha$. Let $X_1,\cdots, X_n$ be independent $G$-valued random variables with density functions $f_1,\cdots, f_n$.
Then the following inequality holds
\be\label{inq:MainLCsum}
M(X_1\cdots X_n)\le \sup M(U_1\cdots U_n)
\ee
where the supremum is taken over all independent $G$-valued random variables $U_1,\cdots, U_n$ with $U_i$ uniform distributed on a subset of $G$ according to $M(U_i)=M(X_i)$ (with the understanding that $U_i$ is a Dirac delta distribution if $M(X_i)=\infty$).
\end{thm}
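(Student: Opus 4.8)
The plan is to apply the extreme-point principle of Theorem~\ref{thm:MainLC} with $E_i=G$, $\alpha_i=\alpha$, $m_i=M(X_i)$ (each finite, since $X_i$ has a density $f_i$), and with the functional
\[
\Phi(\nu_1,\dots,\nu_n)\;=\;M(\nu_1*\cdots*\nu_n),
\]
where $*$ denotes convolution with respect to the group operation, so that $\nu_1*\cdots*\nu_n$ is the law of $Y_1\cdots Y_n$ for independent $Y_i\sim\nu_i$. Writing $\mu_i$ for the law of $X_i$ we have $\mu_i\in\mathcal{P}_{m_i}(G)$ and $M(X_1\cdots X_n)=\Phi(\mu_1,\dots,\mu_n)$, so it suffices to bound $\sup_{\prod_i\mathcal{P}_{m_i}(G)}\Phi$, which Theorem~\ref{thm:MainLC} evaluates over the much smaller set $\prod_i\mathcal{P}^*_{m_i}(G)$.

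First I would check the hypotheses of Theorem~\ref{thm:MainLC}. That $\Phi$ is real-valued follows from $M(\nu_1*\cdots*\nu_n)\le M(\nu_n)\le m_n$: indeed $\nu_1*\cdots*\nu_n=\int_G \bigl(\delta_x*(\nu_2*\cdots*\nu_n)\bigr)\,\nu_1(dx)$ is a mixture of left-translates of $\nu_2*\cdots*\nu_n$, left-translation preserves $M$ because $\alpha$ is left-invariant, and $M$ of a mixture is at most the supremum of the $M$'s of the parts; iterating gives the bound. Coordinate-convexity is immediate, since for fixed $\nu_j$ ($j\ne i$) the map $\nu\mapsto\nu_1*\cdots*\nu_{i-1}*\nu*\nu_{i+1}*\cdots*\nu_n$ is affine and $M=\sup\{\nu\mapsto\nu(A)/\alpha(A):\alpha(A)>0\}$ is a supremum of linear functionals, hence convex. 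For weak-* lower semicontinuity I would argue in two steps: (i) $M$ itself is weak-* l.s.c. on $\mathcal{P}(G)$, because outer regularity of $\alpha$ yields $M(\nu)=\sup\{\nu(U)/\alpha(U):U\text{ open},\ 0<\alpha(U)<\infty\}$ (enclose a near-optimal set $A$ of finite measure in an open $U$ with $\alpha(U\setminus A)$ small, noting $\nu(U)\ge\nu(A)$), and each $\nu\mapsto\nu(U)/\alpha(U)$ is weak-* l.s.c. by the portmanteau theorem, so $M$ is a supremum of l.s.c. functions; (ii) $(\nu_1,\dots,\nu_n)\mapsto\nu_1*\cdots*\nu_n$ is weak-* continuous, being the composition of the (continuous) product map $(\nu_i)\mapsto\bigotimes_i\nu_i$ into $\mathcal{P}(G^n)$ with pushforward under the continuous multiplication map $G^n\to G$. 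Hence $\Phi$ is weak-* l.s.c.

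Granting this, Theorem~\ref{thm:MainLC} gives $\sup_{\prod_i\mathcal{P}_{m_i}(G)}\Phi=\sup_{\prod_i\mathcal{P}^*_{m_i}(G)}\Phi$, and here the non-atomicity of $\alpha$ does the decisive work. For $m_i<\infty$, a measure $\nu_i\in\mathcal{P}^*_{m_i}(G)$ has $\alpha$-density $m_i\mathbbm{1}_{U_i}+c_i\mathbbm{1}_{\{x_i\}}$; since $\alpha(\{x_i\})=0$ the point-mass term is $\alpha$-negligible, so $\nu_i=m_i\mathbbm{1}_{U_i}\alpha$ with $m_i\alpha(U_i)=1$, i.e.\ $\nu_i$ is the uniform distribution on a set of $\alpha$-measure $1/m_i$, hence the law of a $G$-valued $U_i$ with $M(U_i)=m_i=M(X_i)$ — an admissible choice in \eqref{inq:MainLCsum} — and $\Phi(\nu_1,\dots,\nu_n)=M(U_1\cdots U_n)$. (For $m_i=\infty$, $\mathcal{P}^*_{m_i}(G)$ is the set of Dirac measures, matching the convention in the statement.) Taking the supremum, $\sup_{\prod_i\mathcal{P}^*_{m_i}(G)}\Phi\le\sup M(U_1\cdots U_n)$, the right-hand side of \eqref{inq:MainLCsum}, and chaining the inequalities yields $M(X_1\cdots X_n)\le\sup M(U_1\cdots U_n)$.

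The main obstacle is the lower-semicontinuity verification — in particular the open-set representation of $M$ (which exhibits it as a supremum of weak-* l.s.c. functionals) and the weak-* continuity of $n$-fold convolution on a general Polish group; the real-valuedness bound $M(\nu_1*\cdots*\nu_n)\le M(\nu_n)$ is a secondary point worth recording so that Theorem~\ref{thm:MainLC} literally applies. Once $\Phi$ is known to be real-valued, coordinate-convex, and weak-* l.s.c., the remaining steps — invoking Theorem~\ref{thm:MainLC} and identifying $\mathcal{P}^*_{m_i}(G)$ with laws of uniform variables — are forced by the non-atomicity of the Haar measure.
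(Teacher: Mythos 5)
Your proposal is correct and follows essentially the same route as the paper: it applies Theorem~\ref{thm:MainLC} to $\Phi(\nu_1,\dots,\nu_n)=M(\nu_1*\cdots*\nu_n)$, verifying its hypotheses via the paper's own mechanisms (Lemma~\ref{lem:push} for the pushforward under the continuous group multiplication, and Lemma~\ref{lem:LSC} for the lower semicontinuity of $M$), and then uses non-atomicity of $\alpha$ to read off $\mathcal{P}^*_{m_i}(G)$ as uniform laws. One small slip worth flagging: the parenthetical ``each finite, since $X_i$ has a density $f_i$'' is not a valid inference, since an $\alpha$-density may be essentially unbounded; your parenthetical at the end handles $m_i=\infty$ correctly, so nothing breaks, but the justification should simply be dropped (and note that the paper's non-compact version of Theorem~\ref{thm:MainLC} allows $\Phi$ to take the value $+\infty$, so the real-valuedness check — whose iterated mixture bound in any case only yields $M(\nu_1*\cdots*\nu_n)\le M(\nu_n)$ because only left-translation invariance of $\alpha$ is available — is not strictly required).
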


The second step in the proof will be a general rearrangement theorem for maxima of Euclidean densities.  It allows one to dispense with the 
supremum in the inequality \eqref{inq:MainLCsum}, reducing the random variable problem to a particular convex geometric one.  
It is here that our techniques are limited to the Euclidean setting and certain linear maps.  When dealing with $\RL^d$,  
there is a particularly nice notion of rearrangements for functions on $\RL^d$
(namely, spherically symmetric, decreasing rearrangements) with a well developed set of rearrangement inequalities
that can be deployed, which will allow us to arrive at the generalization described in Theorem \ref{thm:GRGZ}.

Our approach is actually similar in philosophy to Rogozin's, in that we exploit an argument based
on an understanding of extreme points of an appropriate class of measures. However Rogozin's argument contained two steps: a discretization
step and an extreme point characterization in the discrete setting. By eschewing the
discretization and directly implementing an appropriate extreme point characterization
in the general setting of Polish measure spaces, we simultaneously simplify 
Rogozin's argument and allow for significant extension of his results.

In contrast, when discussing their methods for studying anticoncentration for linear images
of high-dimensional distributions, which are based on an unpublished paper
of Ball and Nazarov \cite{BN96:unpub} and do not yield sharp constants, 
Rudelson and Vershynin \cite{RV15} say ``this approach avoids the delicate combinatorial
arguments that appear in the proof of Rogozin's theorem''; Bobkov and Chistyakov \cite{BC14}
similarly characterize Rogozin's theorem as ``delicate''. What follows will demonstrate Rogozin's 
inequality and the generalizations that we develop as a consequence of general and simple mechanisms. 

With this in hand, we will prove Theorem \ref{thm:Generalized infinity EPI} by proving the inequality in the special case that the $X_i$ are 
uniformly distributed on origin symmetric balls.

In parallel to the Euclidean space discussion, we will also develop an analogous (partial) theory for integer-valued random variables with their 
usual Haar (counting) measure.  To complete the development of a theory in the integer setting requires additional
ingredients, and such a completion will be reported in a forthcoming paper. 

We will organize this paper as follows. In Section~\ref{sec:extreme}, we will first describe applications of Theorem~\ref{thm:MainLC}
before deploying the Krein-Milman machinery and a general approximation argument to obtain its proof.

In Section~\ref{sec:rearrangement},  we first deduce Theorem \ref{thm:GRGZ} from Theorem \ref{cor:MainLC} by proving 
a generalization of the rearrangement inequality of \cite{WM14}, corresponding to the $\infty$-R\'enyi entropy case.  This in turn relies on the 
Rogers-Brascamp-Lieb-Luttinger rearrangement inequality \cite{Rog57, BLL74}.

In Section~\ref{sec:Quantitative bounds}, we will derive the quantitative bounds of Theorem~\ref{thm:Generalized infinity EPI}.  
Separate arguments will be put forth for the constants appearing in the minima, one bound derived from straightforward analysis 
of the density function the releveant random variable, the other from investigation of its characteristic function.  Both results 
will rely on application of a geometric Brascamp-Lieb inequality. 

\section{Extreme points of a class of measures}

\label{sec:extreme}


\subsection{Examples}
Let us first discuss some immediate consequences of the Theorem \ref{thm:MainLC}, starting with a more explicit description of $\mathcal{P}_m^*(E)$ in two general cases.  We will following this with some examples of convex lower semi-continuous functions on $\prod_{i=1}^n \mathcal{P}_{m_i}(E_i)$.  
\begin{enumerate}
\item
When $\alpha$ is non-atomic,  $\mathcal{P}_m^*(E)$ are compactly supported measures with density of the form $m \mathbbm{1}_U$, where $\alpha(U)  = \frac 1 m$.

\item \label{case2}
When $\alpha$ is discrete, an element of $\mathcal{P}_m^*(E)$  is necessarily supported on a finite set, say $X = \{x_0, x_1,  \dots x_n \}$ in $E$ and has a density realized as 
\[
	m \mathbbm{1}_{X \setminus x_0} + c \mathbbm{1}_{x_0}.
\]
such that $m\alpha(X) > 1$, and $c = \frac{1- m \alpha( X \setminus x_0)}{\alpha(x_0)} $. In particular when $\alpha$ is a counting measure, $X$ can be written as $\{x_0, x_1, \dots, x_{[\frac 1 m]} \}$, and $c = 1 - m [\frac 1 m]$.
\end{enumerate}

Interesting examples can be built by composing couplings with functionals.
\begin{ex} {\normalfont
Given a metric $c$, on a compact space $K$, the map $C(\mu, \nu) = \inf_{\pi \in \Pi(\mu,\nu)} \int_{K \times K} c(x,y) \pi(dx, dy)$ is convex and lower semi-continuous.  Here $\Pi(\mu,\nu)$ is the space of all couplings of $\mu$ and $\nu$, in the sense that $\pi \in \mathcal{P}(K \times K)$ and its marginals satisfy $\pi_1 = \mu$, $\pi_2 = \nu$, see \cite{Vil03:book}. } 
\end{ex}

The majority of this paper will actually be concerned with the trivial coupling.  The following example will be of later use.

\begin{lem}\label{lem:push}
Let $(E_1,\alpha_1)$, $\cdots$, $(E_n, \alpha_n)$ and $(E,\alpha)$ be Polish measure spaces.  For a continuous map $\varphi: E_1 \times \cdots \times E_n \to E$, and a convex lower semi-continuous functional $T: \mathcal{P}(\prod_{i=1}^n E_i) \to \mathbb{R}$ the map, 
	\[
		(\mu_1, \dots, \mu_n) \mapsto T(\varphi_*(\mu_1 \otimes \cdots \otimes \mu_n))
	\]
is lower semi-continuous and coordinate convex.
\end{lem}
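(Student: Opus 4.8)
\textbf{Proof plan for Lemma \ref{lem:push}.}

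The plan is to separate the claim into its two components — coordinate convexity and lower semi-continuity — and to handle each by reducing to the corresponding property of $T$, using that the two operations in between, namely forming the product measure $\mu_1\otimes\cdots\otimes\mu_n$ and pushing forward under $\varphi$, behave well. First I would observe that the push-forward $\varphi_*$ is a continuous affine map from $\mathcal{P}(\prod_i E_i)$ (with weak-* topology) to $\mathcal{P}(E)$: affineness is immediate from $\varphi_*(\lambda\pi_1 + (1-\lambda)\pi_2) = \lambda \varphi_*\pi_1 + (1-\lambda)\varphi_*\pi_2$, and continuity because $\int g\, d(\varphi_*\pi) = \int (g\circ\varphi)\, d\pi$ for bounded continuous $g$, with $g\circ\varphi$ bounded continuous by continuity of $\varphi$. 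Consequently $T\circ\varphi_*$ is again convex and lower semi-continuous on $\mathcal{P}(\prod_i E_i)$, so it suffices to prove that the tensor map $(\mu_1,\dots,\mu_n)\mapsto \mu_1\otimes\cdots\otimes\mu_n$ is coordinate affine and continuous (coordinatewise) on $\prod_i \mathcal{P}(E_i)$.

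For coordinate affineness of the tensor map, I would fix all but one coordinate, say the $j$-th, and note that $\mu_1\otimes\cdots\otimes(\lambda\mu_j + (1-\lambda)\mu_j')\otimes\cdots\otimes\mu_n = \lambda(\mu_1\otimes\cdots\otimes\mu_j\otimes\cdots\otimes\mu_n) + (1-\lambda)(\mu_1\otimes\cdots\otimes\mu_j'\otimes\cdots\otimes\mu_n)$; this is checked on product (rectangle) sets, which generate the product $\sigma$-algebra, and hence holds as measures. Composing an affine map in each coordinate with the convex function $T\circ\varphi_*$ yields a function that is convex in each coordinate separately — i.e. coordinate convex. For the continuity needed for lower semi-continuity, I would use that on Polish spaces weak-* convergence of $\mu^{(k)}_i \to \mu_i$ for each $i$ implies $\mu^{(k)}_1\otimes\cdots\otimes\mu^{(k)}_n \to \mu_1\otimes\cdots\otimes\mu_n$ weak-*; a clean way to see this is via a sequence of bounded continuous test functions, first reducing to products $g_1(x_1)\cdots g_n(x_n)$ of bounded continuous functions (whose linear span is dense in an appropriate sense, e.g. by Stone–Weierstrass on compacts together with tightness), for which $\int \prod_i g_i \, d\bigotimes_i \mu^{(k)}_i = \prod_i \int g_i \, d\mu^{(k)}_i \to \prod_i \int g_i\, d\mu_i$. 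Then lower semi-continuity of $T\circ\varphi_*$ together with continuity of the tensor map gives that $(\mu_1,\dots,\mu_n)\mapsto T(\varphi_*(\mu_1\otimes\cdots\otimes\mu_n))$ is lower semi-continuous on the product space, completing the proof.

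The main obstacle I anticipate is the joint continuity of the tensor map in the weak-* topologies — specifically, justifying that it is enough to test against product functions $g_1\otimes\cdots\otimes g_n$. On general Polish spaces one must be a little careful: one cannot simply invoke compactness, so I would either restrict attention (as in the paper's intended application, where supports lie in compact sets by Theorem \ref{thm:MainLC}) to use Stone–Weierstrass directly, or argue via tightness — each convergent sequence $\mu^{(k)}_i$ is tight, hence the product measures are tight, and on the relevant compact sets product functions suffice to separate/approximate. Everything else (affineness of push-forward, affineness of tensoring, stability of convexity and lower semi-continuity under composition with affine continuous maps) is routine and I would not belabor it.
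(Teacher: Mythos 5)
Your plan matches the paper's proof: both reduce coordinate convexity to the fact that $(\mu_1,\dots,\mu_n)\mapsto\varphi_*(\mu_1\otimes\cdots\otimes\mu_n)$ is coordinate affine, and both reduce lower semi-continuity to continuity of the push-forward and tensoring maps composed with the lower semi-continuous $T$. The only cosmetic difference is that you decouple $\varphi_*$ from $\otimes$ more explicitly and sketch a Stone--Weierstrass/tightness argument for continuity of $(\mu,\nu)\mapsto\mu\otimes\nu$, whereas the paper simply cites Billingsley for that fact (and offers an alternative Kantorovich--Rubinstein estimate).
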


\begin{proof}
For convexity it suffices to show $(\mu_1, \dots, \mu_n) \mapsto \varphi_*(\mu_1 \otimes \cdots \otimes \mu_n)$ is affine in its coordinates, as its composition with the convex map $T$, is then necessarily coordinate convex.
For clarity and notational brevity we will consider only the $n=2$ case, as the general situation is no more complicated.  To prove affineness, take Borel $A \subseteq E$, and observe that from the definition of the pushforward and the product measure,
\begin{align*}
			 (\lambda \mu_1 + (1-\lambda)\nu_1) \otimes \mu_2 (\varphi^{-1}(A))
		&=
			\lambda \mu_1 \otimes \mu_2(\varphi^{-1}(A)) + (1-\lambda) \nu_1 \otimes \mu_2(\varphi^{-1} (A))
\end{align*}
Thus $\varphi_*(\lambda \mu_1 +(1-\lambda) \nu_1 , \mu_2 ) = \lambda \varphi_*(\mu_1 , \mu_2) + (1-\lambda)\varphi_*(\nu_1, \mu_2)$, and since the argument for the second coordinate is identical, convexity is proved.

For continuity, we will show that we have a composition of continuous maps.  For $f$ continuous and bounded , $f\circ \varphi$ is as well.  Thus for $\lim_n \mu_n = \mu$, $ \lim_n \mu_n( f\circ \varphi) = \mu(f\circ \varphi)$.  Thus the pushforward mapping induced by a continuous map is continuous with respect to the weak-* topology.

The map $(\mu, \nu) \mapsto \mu \otimes \nu$ is continuous as well, as product measure convergence in the weak-* topology is equivalent to weak-* convergence of the marginals, see for instance \cite{Bil99:book}.  Or if one fixes metrics $d_1$ and $d_2$ on $E_1$ and $E_2$ lets $W_{1,d}$ denote the $1$-Kantorovich-Rubenstein distance with respect to a metric $d$, it is straight forward to prove that
\[
	W_{1,d}(\mu \otimes \nu, \mu' \otimes \nu') \leq W_{1,d_1} (\mu, \nu) + W_{1,d_2}(\mu',\nu')
\]
where $d((x_1,x_2),(y_1,y_2)) = d_1(x_1,y_1) + d_2(x_2,y_2)$.  Thus we have lower semi-continuity as this property is preserved by composition with continuous functions.
\end{proof}

The following examples are then corollaries.

\begin{ex}[Littlewood Offord] {\normalfont
When $X_i$ are independent random variables taking values in a compact subset $K_i$ of a ring $R$ with a reference measure $\alpha_i$. Then for fixed $v_i$ in a topological $R$-module $M$ with and an open subset $\mathcal O$,
\[
	 \mathbb{P}(v_1 X_1 + \cdots + v_n X_n \in \mathcal{O}) \leq \sup_U  \mathbb{P}(v_1 U_1 + \cdots + v_n U_n \in \mathcal{O})
	\]
	where the suprema is taken over all $U_i$ independent with law belonging to $\mathcal{E}(\mathcal{P}_{\alpha_i}(K_i))$. }
\end{ex}

\begin{proof}
Taking  $T: \mathcal{P}(M) \to \mathbb{R}$, defined by $\mu \mapsto \mu(\mathcal{O})$ is lower semi-continuous and linear.  The map $a: {R}^n \to M$ given by $a(x_1,\dots, x_n) = x_1 v_1 + \cdots + x_n v_n$ is continuous. Thus by the map $:\prod_i^n \mathcal{P}_{\alpha_i}(K_i) \to \mathbb{R}$ defined by $(\mu_1, \dots, \mu_n) \mapsto T v_*( \mu_1 \otimes \cdots \otimes \mu_n ).$ is coordinate convex (it is actually multi-linear) and lower semi continuous.  Thus by Lemma \ref{lem:push},
\begin{align*}
	T v_*(\mu_1, \dots, \mu_n) &\leq \sup_{\nu_i \in \mathcal{E}(\mathcal{P}(K_i))}T a_*(\nu_1, \dots \nu_n))
\end{align*}
But writing $X_i \sim \mu_i$, his is exactly
\begin{align*}
	\mathbb{P}(a_1 X_1 + \cdots + a_n X_n \in \mathcal{O}) 
		\leq 
			\sup_U  \mathbb{P}(a_1 U_1 + \cdots + a_n U_n \in \mathcal{O})
\end{align*}
\end{proof}

\begin{ex} {\normalfont
Suppose that $X_1, \dots, X_n$ are independent $G$-valued random variables on a compact group $G$ with Haar measure $\alpha$.  Then, 
\[
	M (X_1 \cdots X_n) \leq \sup M (U_1 \cdots U_n),
\]
when $\alpha$ is atom-free, the supremum is taken over all $U_i$ uniformly distributed on sets such that $M(U_i) = M(X_i)$, otherwise $\alpha$ is a counting measure and $U_i$ is of the form described in case \ref{case2} above. }
\end{ex}
We will pay particular attention to the Euclidean case on $\mathbb{R}^d$ and the integer case on $\mathbb{Z}$.
\begin{ex} \label{ex:eulideanrogozin} {\normalfont
When $X = (X_1, \dots, X_n)$ has coordinates $X_i$ bounded, independent $\mathbb{R}^d$ valued random  variables, and $P$ is a projection from $\mathbb{R}^n$ to a $k$-dimensional subspace,
\begin{align*}
	M(T^{(d)} (X)) \leq \sup_U M(T^{(d)}  (U)),
\end{align*}
where the suprema is taken over all $U = (U_1, \dots, U_n)$ with independent coordinates $U_i$, uniform on sets chosen satisfying $M(X_i) = M(U_i)$. }
\end{ex}

\begin{ex} \label{ex:integerrogozin} {\normalfont
When $X = (X_1, \dots, X_n)$ has coordinates $X_i$ bounded, independent, and taking values in a discrete group $G$, with counting reference measure.  Then,
\begin{align*}
	M(X_1  \cdots  X_n) 
		&\leq 
			\sup_U M(U_1  \cdots  U_n)
				\\
		&\leq 
			\sup_V M(V_1  \cdots  V_n)
\end{align*}
where the first suprema taken over all $U = (U_1, \dots, U_n)$ with independent coordinates $U_i$, nearly uniform on sets in the sense of case \ref{case2} such that $M(X_i) = M(U_i)$, and the second is taken over all $V = (V_1, \dots, V_n)$ with independent coordinates $V_i$ uniform on sets of size determined by the equation $M(V_i) = [M(X_i)^{-1}]^{-1}$.
}
\end{ex}
Note that we use the notation $[x]$ for the floor of $x$ equal to $\sup \{n \in \mathbb{Z} : n \leq x\}$.  The second inequality follow from the fact that $[M(X_i)^{-1}]^{-1} \geq M(X_i)$, so we can take a supremum over a larger set.


\subsection{Proof of Extremizers}
In order to recall the classical Krein-Milman theorem, let us fix some notation. For $S$ a subset of a locally convex space, we will denote by $\mathcal{E}(S)$ the set of {\it extreme points}\footnote{That is the set of points $s \in S$ such that $x,y \in S$ with $\half(x+y) = s$ implies $x=y=s$} of $S$
. 
We will denote by $co(S)$ the convex hull of $S$ defined to be all finite convex combinations of elements in $K$, and $\overline{co}(S)$ for its topological closure.

\begin{thm}\label{thm:KM}(\cite{KM40}) For a locally convex topological vector space $V$, 
if $K\subset V$ is compact and convex, then $K=\overline{co}(\mathcal{E}(K))$.
\end{thm}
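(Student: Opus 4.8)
The plan is to obtain the statement from the Hahn--Banach separation theorem together with a Zorn's lemma argument on \emph{extremal subsets} (faces) of $K$. Call a nonempty closed set $F \subseteq K$ \emph{extremal} if whenever $x,y \in K$ and $tx + (1-t)y \in F$ for some $t \in (0,1)$, then already $x,y \in F$. Two elementary observations drive the argument: the extreme points of $K$ are exactly the singleton extremal sets, and extremality is transitive, i.e.\ an extremal subset of an extremal subset of $K$ is itself extremal in $K$. Also, for any continuous linear functional $\ell$, the set $\{x \in K : \ell(x) = \max_K \ell\}$ is a closed extremal subset of $K$, nonempty by compactness.

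First I would prove that every nonempty closed extremal set $F \subseteq K$ contains an extreme point of $K$. Order the closed extremal subsets of $F$ by reverse inclusion. Given a chain, its intersection is closed, is extremal, and is nonempty: finite sub-intersections coincide (in the chain order) with their smallest member, hence are nonempty, so the finite intersection property together with compactness of $K$ forces the full intersection to be nonempty. By Zorn's lemma there is a minimal closed extremal set $F_0 \subseteq F$. If $F_0$ had two distinct points, Hahn--Banach (here local convexity, and the Hausdorff property, are used) would give a continuous linear functional $\ell$ separating them, and then $\{x \in F_0 : \ell(x) = \max_{F_0} \ell\}$ would be a proper closed extremal subset of $F_0$ --- extremal in $K$ by transitivity --- contradicting minimality. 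Hence $F_0$ is a singleton, i.e.\ an extreme point, and in particular $\mathcal{E}(K) \neq \emptyset$.

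Now put $C = \overline{co}(\mathcal{E}(K))$. Since $\mathcal{E}(K) \subseteq K$ and $K$ is convex and closed, $C \subseteq K$. If $C \neq K$, choose $x_0 \in K \setminus C$; as $C$ is closed and convex and $V$ is locally convex, the Hahn--Banach separation theorem produces a continuous linear functional $\ell$ with $\ell(x_0) > \sup_{c \in C}\ell(c)$. The face $F := \{x \in K : \ell(x) = \max_K \ell\}$ is then a nonempty closed extremal subset of $K$ with $\max_K \ell \geq \ell(x_0) > \sup_C \ell$, so $F \cap C = \emptyset$. But by the first step $F$ contains an extreme point of $K$, and every extreme point lies in $C$ --- a contradiction. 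Therefore $K = \overline{co}(\mathcal{E}(K))$.

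I expect the existence-of-extreme-points step to be the part requiring the most care: one must check that intersections of chains of nonempty closed extremal sets remain nonempty (this is precisely where compactness of $K$ enters) and that extremality is inherited by these sub-faces, and both this step and the final separation step rely essentially on the local convexity (and Hausdorffness) of $V$ through Hahn--Banach. The remaining verifications --- that the level set of a functional at its maximum is a face, and that $C \subseteq K$ --- are routine.
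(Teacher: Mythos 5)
Your proposal is a correct rendition of the standard Zorn's-lemma / Hahn--Banach proof of the Krein--Milman theorem. The paper itself does not give a proof: it invokes this as a classical result and cites Krein and Milman's original paper \cite{KM40}, so there is no in-text argument to compare against; your argument (transitivity of faces, existence of a minimal face via compactness and the finite intersection property, separation to force $\overline{co}(\mathcal{E}(K)) = K$) is exactly the textbook route and is sound, including your remark that Hausdorffness is needed for Hahn--Banach to produce a separating continuous functional.
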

The significance of the result for our purposes is that it reduces the search for suprema in the following case, if $\Phi: K \to \mathbb{R}$ is convex and lower semi-continuous in the sense that $x_\beta \to x$ implies, $\Phi(x) \leq \liminf \Phi(x_\beta)$, then we can limit our search for extremizers to the smaller set $\mathcal{E}(K)$ as
\[
	\sup_{x \in K} \Phi(x) = \sup_{x \in \mathcal{E}(K)} \Phi(x).
\]
Indeed, $x \in K$, can be written as the limit of $x_\beta$ in $co(\mathcal{E}(K))$, and any such $\beta$ can be written as a finite convex combination of extreme points, $x_\beta = \sum \lambda_i e_i$, but by convexity any such elements value can be bounded by $\Phi(x_\beta) \leq \sum \lambda_i \Phi(e_i) \leq \sup_{\mathcal{E}(K)} \Phi$.  Thus
\begin{align*}
	\Phi(x) 
		&\leq 
			\liminf \Phi(x_\beta) 
			\\
		&\leq  
			\sup_{x \in \mathcal{E}(K)} \Phi(x).
\end{align*}

Recall from Prokhorov's tightness theorem that for  a compact subset $K$ of a Polish space,  $\mathcal{P}(K)$ is a compact subset of the (Polish vector space) of all finite Borel signed measures endowed with the weak-* topology.  Since $\mathcal{P}(K)$ is clearly convex,  by Krein-Milman $\overline{co}(\mathcal{E}(\mathcal{P}(K))) = \mathcal{P}(K)$, and in fact it is straightforward to show that $\mathcal{E}(\mathcal{P}(K))$ is the set of Dirac measures on $K$.  The main purpose of this subsection is to prove the following lemma, which shows that the Krein-Milman framework applies to $\mathcal{P}_m(K)$ and thus simplifies the search for maximizers of certain functionals.

\begin{lem}\label{lem:MainLC}
	Let $\{(E_i,\alpha_i)\}_{i=1}^n$, be Polish metric measure spaces with regular Borel measures $\alpha_i$, then for compact $K_i  \subseteq E_i$
 any coordinate-convex, lower semi-continuous with respect to the product of weak-* topologies,
	$
	\Phi: \prod_{i=1}^n \mathcal{P}_{m_i}(K_i) \to \mathbb{R}
	$
	satisfies
	\begin{align}\label{inq:MainLC}
	\sup \Phi(\prod_{i=1}^n \mathcal{P}_{m_i}(K_i)) 
	&= 
	\sup \Phi\left(\mathcal{E}\left(\prod_{i=1}^n \mathcal{P}_{m_i}(K_i)\right)\right) 
	\\
	&= 
	\sup \Phi \left(\prod_{i=1}^n \mathcal{E}(\mathcal{P}_{m_i}(K_i))\right).
	\end{align}
	Furthermore, the extreme points  $\mu\in \mathcal{E}(\mathcal{P}_m(K))$ are of the form
	\[
	\mu(A) = \int_A m \mathbbm{1}_U(y) + c \mathbbm{1}_{\{x\}}(y) d\alpha(y),
	\]
	for a measurable subset $U$ of  $K$, $c \in [0,m)$ and $x \in K \setminus U$, and for any measurable set $A$.
\end{lem}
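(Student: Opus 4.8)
The plan is to prove the two assertions of the lemma as essentially independent ingredients: first that the Krein--Milman reduction recalled just above genuinely applies to $\mathcal{P}_m(K)$ and, through an iteration over the coordinates, to the products $\prod_i\mathcal{P}_{m_i}(K_i)$; and second, a direct perturbation argument that pins down the shape of an extreme point of $\mathcal{P}_m(K)$. Note that one should \emph{not} try to apply Theorem~\ref{thm:KM} to $\Phi$ directly on $\prod_i\mathcal{P}_{m_i}(K_i)$, since $\Phi$ is assumed only coordinate-convex, not jointly convex; the iteration is what circumvents this.

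For the first ingredient, the key is to check that $\mathcal{P}_m(K)$ is a compact convex subset of the locally convex space of finite signed Borel measures on $K$ with the weak-* topology. Convexity is immediate from the fact that $M(\mu)=\sup_A \mu(A)/\alpha(A)$ is a pointwise supremum of affine functionals of $\mu$. For compactness I would use that $\mathcal{P}(K)$ is weak-* compact (Prokhorov, as recalled) together with the identity $\mathcal{P}_m(K)=\mathcal{P}(K)\cap\bigcap_{U\text{ open}}\{\mu:\mu(U)\le m\alpha(U)\}$, which follows from outer regularity of both $\mu$ and $\alpha$ (so that $\mu(A)\le m\alpha(A)$ for all Borel $A$ is forced by the same inequality over open sets). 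Each set in the intersection is weak-* closed, because $\mu\mapsto\mu(U)$ is weak-* lower semi-continuous for open $U$ by the portmanteau theorem; hence $\mathcal{P}_m(K)$ is closed in the compact set $\mathcal{P}(K)$, and so compact. With this in hand, Theorem~\ref{thm:KM} and the elementary argument stated just before the lemma give, for any convex weak-* l.s.c.\ function of a single coordinate, that its supremum over $\mathcal{P}_{m_i}(K_i)$ equals its supremum over $\mathcal{E}(\mathcal{P}_{m_i}(K_i))$. Applying this to $\mu_1\mapsto\Phi(\mu_1,\mu_2,\dots,\mu_n)$ with the remaining coordinates frozen (a slice of a jointly l.s.c.\ function is l.s.c., and coordinate-convexity is exactly convexity in that slice) and iterating over $i=1,\dots,n$ yields $\sup\Phi(\prod_i\mathcal{P}_{m_i}(K_i))=\sup\Phi(\prod_i\mathcal{E}(\mathcal{P}_{m_i}(K_i)))$; the third expression in the lemma then coincides with these via the standard coordinatewise identity $\mathcal{E}(\prod_i C_i)=\prod_i\mathcal{E}(C_i)$ for compact convex $C_i$.

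For the second ingredient, fix $\mu\in\mathcal{E}(\mathcal{P}_m(K))$. If $m=\infty$ then $\mathcal{P}_m(K)=\mathcal{P}(K)$ and the extreme points are the Dirac measures, so assume $m<\infty$; then $\mu$ admits an $\alpha$-density $f$ with $0\le f\le m$. The claim is that $f$ can take a value in the open interval $(0,m)$ on at most one atom of $\alpha$. If not, then either the non-atomic part of $\{0<f<m\}$ has positive $\alpha$-measure, or $\{0<f<m\}$ contains two distinct atoms $x_1\ne x_2$. In the first case I would, by intersecting with a level set $\{1/j\le f\le m-1/j\}$, extract a set $B$ of positive finite $\alpha$-measure on which $\alpha$ is non-atomic and $\delta\le f\le m-\delta$ for some $\delta>0$, then pick disjoint $B_1,B_2\subseteq B$ of equal positive $\alpha$-measure (possible since a non-atomic finite measure takes all intermediate values) and set $g=\mathbbm{1}_{B_1}-\mathbbm{1}_{B_2}$. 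In the second case set $g=\alpha(\{x_1\})^{-1}\mathbbm{1}_{\{x_1\}}-\alpha(\{x_2\})^{-1}\mathbbm{1}_{\{x_2\}}$. In either case $g$ is $\alpha$-integrable, has zero $\alpha$-integral, and is supported where $f$ is bounded away from $0$ and $m$, so for small $\epsilon>0$ the measures with densities $f\pm\epsilon g$ lie in $\mathcal{P}_m(K)$, are distinct, and average to $\mu$ — contradicting extremality. Hence, after an $\alpha$-null modification, $f=m\mathbbm{1}_U+c\mathbbm{1}_{\{x\}}$ with $U:=\{f=m\}$, a single point $x\notin U$, and $c:=f(x)\in[0,m)$, which is the asserted form.

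I expect the main obstacle to be the first ingredient, specifically the weak-* closedness of $\mathcal{P}_m(K)$: this cannot be read off from continuity of $\mu\mapsto\mu(A)$, which fails for general Borel $A$, and must be routed through open sets and outer regularity. A secondary point requiring care is the bookkeeping in the third ingredient that isolates \emph{exactly one} exceptional atom — one must separate the atomic and non-atomic parts of $\alpha$ and observe that any larger ``intermediate'' set, whether a positive-measure non-atomic piece or a second atom, always destroys extremality.
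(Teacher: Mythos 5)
Your proof is correct and follows essentially the same architecture as the paper's: establish that each $\mathcal{P}_{m_i}(K_i)$ is compact convex in the weak-* topology, apply Krein--Milman one coordinate at a time to handle coordinate-convexity, invoke $\mathcal{E}(\prod_i C_i)=\prod_i\mathcal{E}(C_i)$, and then characterize $\mathcal{E}(\mathcal{P}_m(K))$ by a perturbation argument. Two presentational differences are worth flagging. For compactness, the paper first proves lower semi-continuity of $M$ (its Lemma on l.s.c.\ of $M$) and then observes $\mathcal{P}_m(K)=M^{-1}([0,m])$ is closed; you instead write $\mathcal{P}_m(K)$ directly as an intersection over open $U$ of the closed sets $\{\mu:\mu(U)\le m\alpha(U)\}$, using outer regularity of $\alpha$ to recover the Borel inequality. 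These are the same mechanism (portmanteau plus outer regularity) with a different framing, and your version has the advantage of making visible exactly where the regularity hypothesis enters. For the extreme-point perturbation, the paper works with the level sets $S_\varepsilon=\{\varepsilon<f<m-\varepsilon\}$, extracts two points of the support of $\mu|_{S_\varepsilon}$, separates them by disjoint open sets, and perturbs with a rescaled $\pm\bigl(\mathbbm{1}_{U_1}-\text{const}\cdot\mathbbm{1}_{U_2}\bigr)$; you split by the atomic decomposition of $\alpha$ and use the intermediate-value property of a non-atomic finite measure to manufacture two disjoint equal-measure pieces. Your case split is marginally cleaner because the zero-mean normalization of $g$ is immediate in both branches (the paper's normalizing factor $\alpha(U_2)/\alpha(U_1)$ as printed does not in fact make the perturbation integrate to zero; the fix is minor, but your choice avoids the issue entirely). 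Finally, you only prove the inclusion $\mathcal{E}(\mathcal{P}_m(K))\subseteq\{\text{proposed form}\}$ and not the converse; this is all the lemma statement asserts and all the sup-equality needs, whereas the paper also verifies the converse for completeness.
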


Portions of the proof of Lemma \ref{lem:MainLC} will be useful to us and we isolate these results as lemmas.  In particular, we demonstrate the lower semi-continuity of $M$ in Lemma \ref{lem:LSC}, that $\mathcal{P}_m(K)$ is compact and convex in Lemma \ref{lem:topo} and describe its extreme points as Lemma \ref{lem:extremes}.  
 
\begin{lem}\label{lem:LSC}
	The map $M:\mathcal{P} (E) \to [0,\infty]$
	is lower semi-continuous (with respect to the weak-* topology).
\end{lem}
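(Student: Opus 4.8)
The plan is to exhibit $M$ as a pointwise supremum of weak-$*$ lower semi-continuous functions on $\mathcal{P}(E)$; since a supremum of lower semi-continuous functions is lower semi-continuous, this suffices. First I would introduce, for each open set $U \subseteq E$ with $\alpha(U) > 0$, the functional $g_U \colon \mathcal{P}(E) \to [0,\infty]$ given by $g_U(\mu) = \mu(U)/\alpha(U)$ (with the convention $g_U \equiv 0$ when $\alpha(U) = \infty$). By the portmanteau theorem, weak-$*$ convergence $\mu_\beta \to \mu$ forces $\liminf_\beta \mu_\beta(U) \geq \mu(U)$ for every open $U$, so $\mu \mapsto \mu(U)$ is lower semi-continuous and hence so is $g_U$ — the only extra step being division by the positive constant $\alpha(U)$ (or, in the infinite case, nothing to check).

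The substantive step is the identity
\[
M(\mu) \;=\; \sup\bigl\{ g_U(\mu) : U \subseteq E \text{ open},\ \alpha(U) > 0 \bigr\}.
\]
The inequality ``$\geq$'' is trivial, since each open $U$ is a Borel set admissible in the definition of $M(\mu)$. For ``$\leq$'', I would fix a Borel set $A$ with $\alpha(A) > 0$ and bound $\mu(A)/\alpha(A)$ by the right-hand side. When $\alpha(A) = \infty$ this is immediate because $\mu(A)/\alpha(A) = 0$, so assume $0 < \alpha(A) < \infty$ and set $r = \mu(A)/\alpha(A)$. Here I invoke the outer regularity of the reference measure $\alpha$ (part of our standing hypothesis): given $\delta > 0$, choose an open $U \supseteq A$ with $\alpha(U) \leq \alpha(A) + \delta$. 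Then $\mu(U) \geq \mu(A) = r\,\alpha(A) \geq r\bigl(\alpha(U) - \delta\bigr)$, hence $g_U(\mu) \geq r\bigl(1 - \delta/\alpha(A)\bigr)$; letting $\delta \downarrow 0$ gives $\sup_U g_U(\mu) \geq r$, and since $A$ was arbitrary, ``$\leq$'' follows.

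Combining the two steps, $M = \sup_U g_U$, so for every $t \in \mathbb{R}$ the set $\{\mu : M(\mu) > t\} = \bigcup_U \{\mu : g_U(\mu) > t\}$ is a union of open sets, hence open — which is exactly lower semi-continuity of $M$. (In the degenerate case $\alpha \equiv 0$ the supremum is over the empty collection and $M \equiv 0$, which is trivially lower semi-continuous.) I do not anticipate a real obstacle; the one point that needs care is insisting on \emph{open} sets $U$ in the supremum rather than arbitrary Borel sets, since $\mu \mapsto \mu(A)$ is continuous only at measures for which $A$ is a continuity set, and it is precisely the outer regularity of $\alpha$ that lets us pass from arbitrary Borel sets back to open ones without loss.
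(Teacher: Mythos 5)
Your proof is correct and relies on the same two ingredients as the paper's: the portmanteau inequality $\liminf_\beta \mu_\beta(U) \ge \mu(U)$ for open $U$, and the outer regularity of $\alpha$ to pass from open sets back to arbitrary Borel sets. The packaging differs — the paper works directly with a convergent net $\mu_\beta \to \mu$ and verifies $M(\mu) \le \liminf_\beta M(\mu_\beta)$, while you recast $M$ as a pointwise supremum of the lower semi-continuous functionals $g_U$ and invoke the general fact that a supremum of lower semi-continuous maps is lower semi-continuous — but this is a cosmetic rather than substantive difference, and your version makes the role of outer regularity more explicit than the paper's terse ``by the inclusion hypothesis, and the regularity of $\alpha$.''
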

\begin{proof}
	Assume $\mu_\beta \to \mu$, and let $A$ be such that $\alpha(A)>0$, for open $\mathcal{O}$ containing $A$, by the definition of weak-* convergence\footnote{That is the locally convex
Hausdorff topology generated by the separating family of seminorms $\|\mu \|_f:=|\int f d\mu |$,
where $f$ ranges over all continuous functions with compact support} and $M$
	\begin{align*}
		\mu(\mathcal{O}) \leq \liminf_\beta \mu_\beta(\mathcal{O}) \leq \alpha(\mathcal{O}) \liminf_\beta M (\mu_\beta).
	\end{align*}
By the inclusion hypothesis, and the regularity of $\alpha$,
\[
	\mu(A) \leq \alpha(A) \liminf_\beta M(\mu_\beta).
\]
As $A$ was arbitrary, we have our result $M(\mu) \leq \liminf_\beta M (\mu_\beta)$.
\end{proof}

\begin{lem} \label{lem:topo}
For any $S \subseteq E$,  $\mathcal{P}_m(S)$ is convex.  When $S=K$ compact, $\mathcal{P}_m(K)$ is compact in the weak-* topology.
\end{lem}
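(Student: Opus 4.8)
The plan is to prove the two assertions separately, each being a short consequence of material already in place: sublinearity of $M$ and convexity of the support constraint for the first, and Prokhorov's theorem together with Lemma~\ref{lem:LSC} for the second.

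For convexity of $\mathcal{P}_m(S)$, I would first record that $M$ is sublinear: from the definition $M(\mu) = \sup\{\mu(A)/\alpha(A) : \alpha(A)>0\}$ and the fact that the supremum of a sum is at most the sum of the suprema, one gets $M(\lambda\mu + (1-\lambda)\nu) \le \lambda M(\mu) + (1-\lambda)M(\nu)$ for $\lambda \in [0,1]$. Hence the sublevel set $\mathcal{P}_m(E) = M^{-1}([0,m])$ is convex. Separately, $\mathcal{P}(S)$ is convex: for $\lambda \in (0,1)$ one has $\supp(\lambda\mu + (1-\lambda)\nu) = \supp(\mu) \cup \supp(\nu)$ (the inclusion $\subseteq$ since the closed set $\supp(\mu)\cup\supp(\nu)$ carries full mass, the reverse since any open neighbourhood of a point of $\supp(\mu)$ still has positive $\lambda\mu$-mass), so if both supports lie in $S$ so does their union; the cases $\lambda\in\{0,1\}$ are trivial. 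Therefore $\mathcal{P}_m(S) = \mathcal{P}_m(E) \cap \mathcal{P}(S)$ is convex.

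For compactness when $S = K$ is compact, I would invoke the fact recalled just above the lemma (Prokhorov's tightness theorem) that $\mathcal{P}(K)$ is a weak-$*$ compact subset of the space of finite signed Borel measures, and Lemma~\ref{lem:LSC}, which gives that $M$ is weak-$*$ lower semi-continuous, so that $\{\mu : M(\mu) \le m\}$ is weak-$*$ closed. Then $\mathcal{P}_m(K) = \mathcal{P}(K) \cap \{M \le m\}$ is a closed subset of a compact set, hence compact. The only point that needs a word of care—and the closest thing to an obstacle here—is that the weak-$*$ topology intrinsic to $\mathcal{P}(K)$ (which, $K$ being compact, is the one induced by $C(K)$) coincides with the subspace topology inherited from the ambient signed-measure space, so that lower semi-continuity of $M$ and closedness of its sublevel sets transfer correctly; since $K$, being compact, is closed in the Polish space $E$, this identification is routine. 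No substantive difficulty remains.
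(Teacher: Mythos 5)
Your proof is correct and follows essentially the same route as the paper: convexity via the pointwise bound $(\lambda\mu+(1-\lambda)\nu)(A)\le m\alpha(A)$ (phrased by you as sublinearity of $M$), and compactness by combining Prokhorov with the lower semi-continuity of $M$ from Lemma~\ref{lem:LSC} to see $\mathcal{P}_m(K)$ as a closed subset of the compact set $\mathcal{P}(K)$. The only additions you make beyond the paper's argument are the explicit check that the support constraint is preserved under convex combination and the remark about coincidence of topologies, both of which the paper treats as immediate.
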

\begin{proof}
For $\mu, \nu \in \mathcal{P}_m(S)$ and $t \in (0,1)$, the computation
\[
	((1-t)\mu + t \nu)(A) \leq (1-t) m \alpha(A) + t m \alpha(A) = m \alpha(A),
\]
shows that $\mathcal{P}_m(S)$ is convex.  By Prokhorov's compactness theorem, the space of all Borel measures supported on $K$ is compact and since $\mathcal{P}_m(K) = M^{-1}(-\infty,m]$, it is a closed subset (by the lower semi-continuity of $M$ proved in Lemma \ref{lem:LSC}) of a compact metrizable set.
\end{proof}

\begin{lem} \label{lem:extremes}
$\mathcal{P}_m(K)$ (for all $m\in (0,\infty]$) is convex and compact when endowed with the weak-* topology, and furthermore when\footnote{When the $\alpha(K) m \leq 1$, the result is uninteresting, but still true.  $\alpha(K) m =1$ implies $\mathcal{P}_m(K) = \{ m \alpha \}$, while $\alpha(K)m < 1$ implies $\mathcal{P}_m(K)$ is empty.}  $1/\alpha(K)<m<\infty$ the extreme points of $\mathcal{P}_m(K)$ are probability measures $\mu$ of the form
\[
	\mu(A) = \int_A \big( m \mathbbm{1}_U(y) + c \mathbbm{1}_x(y)\big) d\alpha(y).
\]
for $U \subseteq K$, $c \in (0,m)$ and $x \in K \setminus U$.
\end{lem}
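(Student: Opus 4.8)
The plan is to pass to densities and run a direct perturbation argument. Convexity and weak-* compactness of $\mathcal{P}_m(K)$ are already Lemma~\ref{lem:topo} (Prokhorov together with the lower semi-continuity of $M$ from Lemma~\ref{lem:LSC}), so I only need to identify the extreme points. Since $\alpha(K)<\infty$ and every $\mu\in\mathcal{P}_m(K)$ obeys $\mu\le m\alpha$, Radon--Nikodym furnishes each such $\mu$ with a density $f=\tfrac{d\mu}{d\alpha}$ satisfying $0\le f\le m$ $\alpha$-a.e.\ and $\int_K f\,d\alpha=1$; conversely any measurable $f$ with these properties and $\mathrm{supp}(f)\subseteq K$ represents a member of $\mathcal{P}_m(K)$, and the correspondence $\mu\leftrightarrow f$ is affine. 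Thus it is enough to show that every extreme point $f$ of the convex set $\mathcal{F}_m:=\{f\in L^1(\alpha):0\le f\le m,\ \mathrm{supp}(f)\subseteq K,\ \int f\,d\alpha=1\}$ agrees $\alpha$-a.e.\ with $m\mathbbm{1}_U+c\,\mathbbm{1}_{\{x\}}$ for some $U\subseteq K$, some $x\in K\setminus U$, and some $c\in(0,m)$.

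To do this I will fix an extreme $f\in\mathcal{F}_m$, write $N=\{0<f<m\}$, and show that $N$ is, up to $\alpha$-null sets, either empty or a single $\alpha$-atom, by ruling out the two kinds of ``fractional'' region that would admit a mass-preserving perturbation. Let $A\subseteq K$ denote the (countable) set of $\alpha$-atoms. First, suppose $\alpha(N\setminus A)>0$. With $N_\delta:=\{\delta\le f\le m-\delta\}$ one has $N_\delta\setminus A\uparrow N\setminus A$ as $\delta\downarrow 0$, so for some $\delta>0$ the set $S:=N_\delta\setminus A$ carries positive non-atomic $\alpha$-mass; the classical splitting theorem for non-atomic finite measures then gives a partition $S=B_1\sqcup B_2$ with $\alpha(B_1)=\alpha(B_2)>0$. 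For $0<\epsilon<\delta$, the functions $f_\pm:=f\pm\epsilon(\mathbbm{1}_{B_1}-\mathbbm{1}_{B_2})$ lie in $\mathcal{F}_m$ (the perturbation is supported in $S\subseteq\{\delta\le f\le m-\delta\}$, has magnitude $\epsilon<\delta$, and $\alpha$-integrates to $0$), are distinct, and satisfy $f=\tfrac12(f_++f_-)$, contradicting extremality. Hence $\alpha|_N$ is purely atomic. Second, if $N$ contained two distinct atoms $x_1\ne x_2$ with $a_i:=\alpha(\{x_i\})>0$ and $c_i:=f(x_i)\in(0,m)$, then for small $\epsilon>0$ the functions $f_\pm:=f\pm\epsilon\bigl(a_1^{-1}\mathbbm{1}_{\{x_1\}}-a_2^{-1}\mathbbm{1}_{\{x_2\}}\bigr)$ again belong to $\mathcal{F}_m$, are distinct, and average to $f$, another contradiction. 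So $N$ is $\alpha$-a.e.\ empty or a single atom $\{x\}$.

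It then follows that $f=m\mathbbm{1}_U+c\,\mathbbm{1}_{\{x\}}$ $\alpha$-a.e., with $U:=\{f=m\}$ (up to null sets), $c:=f(x)\in(0,m)$ when $N$ is non-null (otherwise the atom is superfluous and $c=0$), and $x\in K\setminus U$; integrating gives $m\alpha(U)+c\,\alpha(\{x\})=1$, which reduces to $\alpha(U)=1/m$ when $\alpha$ is non-atomic and in general determines $c$ from the choice of $U$. This yields $\mathcal{E}(\mathcal{P}_m(K))\subseteq$ the asserted family, which is the containment needed when applying Krein--Milman. If one also wants the reverse inclusion, the same bookkeeping gives it: writing $f=\tfrac12(g_1+g_2)$ with $g_i\in\mathcal{F}_m$ forces $g_i=m$ a.e.\ on $U$ and $g_i=0$ a.e.\ off $U\cup\{x\}$ (pointwise, since $m$ and $0$ are the extreme values of $[0,m]$), whence $\int g_i\,d\alpha=1$ forces $g_i(x)=c$ and $g_1=g_2$.

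I expect the main obstacle to be the atomic bookkeeping rather than any single estimate: one must recognize that the lone affine constraint $\int f\,d\alpha=1$ can be absorbed by exactly one $\alpha$-atom, so that either a non-atomic fractional region or a second fractional atom provides a mass-preserving direction of non-extremality, and one must make the truncation to $\{\delta\le f\le m-\delta\}$ interact cleanly with the atomic/non-atomic decomposition of $\alpha|_K$ before invoking the splitting theorem for non-atomic measures.
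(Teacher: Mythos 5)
Your proof is correct, and it takes a genuinely different technical route from the paper's. The paper locates two distinct points $x_1\ne x_2$ in the \emph{topological support} of the restricted measure $\mu|_{S_\varepsilon}$ (where $S_\varepsilon = \{ \varepsilon < d\mu/d\alpha < m-\varepsilon \}$), chooses disjoint open neighborhoods $\mathcal O_i$ of those points, and perturbs the density on the two sets $\mathcal O_i \cap S_\varepsilon$; the whole argument lives at the level of the measure $\mu$ and uses regularity/supports of Borel measures to produce the two perturbation sets. You instead work directly with the density $f$ and the set $N=\{0<f<m\}$, split $\alpha$ into its atomic and non-atomic parts, and treat each separately: on the non-atomic part of $N_\delta$ you invoke the classical Sierpi\'nski--Lyapunov splitting theorem to produce two pieces of \emph{equal} $\alpha$-mass (which makes the mass-preservation of the perturbation automatic), and for the atomic part you exchange mass between any two fractional atoms, rescaled by $a_i^{-1}$ so the total again integrates to zero. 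This buys you a cleaner bookkeeping of the single affine constraint $\int f\,d\alpha=1$ --- your perturbations visibly preserve mass, whereas the paper must tune the relative coefficient on $\mathbbm{1}_{U_2}$ against $\alpha(U_1),\alpha(U_2)$ to do the same. The paper's route avoids explicitly invoking the splitting theorem (it only uses that a nonzero Borel measure has nontrivial support), so it is marginally lighter on measure-theoretic machinery; your route avoids any appeal to supports and open neighborhoods and makes the ``at most one fractional atom'' conclusion structurally transparent. You also supply the converse inclusion (that the claimed densities really are extreme) by a pointwise-extremality argument which parallels the paper's. One cosmetic point: as in the paper, the $m=\infty$ half of the statement and the degenerate endpoint $c=0$ (when $\alpha(U)=1/m$) should be flagged, but you already note the latter.
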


\begin{proof} We will assume $m<\infty$, the $m=\infty$ case is straight forward. 
Convexity is immediate. For compactness recall that $\mathcal{P}_m(K) = M^{-1} (\infty,m]$, and thus by lower semi-continuity of $M$ (Lemma \ref{lem:LSC}), $\mathcal{P}_m(K)$ is a closed subset of the compact set $\mathcal{P}(K)$, and hence compact itself.

Now let us show that the proposed extremizers are indeed extreme.  Take $\mu$ of the form $\frac{d \mu}{ d \alpha} = m \mathbbm{1}_U + c \mathbbm{1}_{\{x\}}$, and suppose that $\mu$ is an interpolation $\mu_1$ and $\mu_2$, explicitly $\mu = (1-t) \mu_1 + t \mu_2$.  Then clearly, the support of $\mu_i$ must be contained in the support of $\mu$, and as elements of $\mathcal P_m(K)$, $\mu_i(W) \leq m \alpha(W)$.  If this inequality were strict on some $W \subseteq U$ we would have $m \alpha(W) > (1-t)\mu_1(W) + t \mu_2(W) = \mu(W)$, and we would arrive at a contradiction.  Since $\mu_i$ are probability measures supported on $U \cup \{x\}$, 
\[
	\mu_i(\{x\}) = 1 - \mu_i(U) = 1- m \alpha(U) = \mu(\{x\}).
	\]
Thus $\mu$ is extreme.

Next we will show that extreme points are necessarily of the proposed form.  First for notational brevity, for $z \in [0,m/2)$, let us write $S_z$ for the set $\frac{d \mu}{d \alpha}^{-1}(z, m-z)$, and suppose that there exists $\varepsilon>0$, such that we can find disjoint subsets of $S_\varepsilon$ denoted $U_1$, $U_2$ with $\alpha(U_i) >0$.  For concreteness say $\alpha(U_1) \geq \alpha(U_2)$.  Define elements of $\mathcal P_m(K)$ by
\begin{align*}
	d \mu_1 
		&= 
			\left( \frac{d\mu}{d\alpha} + \varepsilon \left[ \mathbbm 1_{U_1} - \frac{\alpha(U_2)}{\alpha(U_1)} \mathbbm{1}_{U_2} \right] \right) d \alpha
			\\
	d \mu_2
		&=
			\left( \frac{d\mu}{d\alpha} - \varepsilon \left[ \mathbbm 1_{U_1} - \frac{\alpha(U_2)}{\alpha(U_1)} \mathbbm{1}_{U_2} \right] \right) d \alpha
\end{align*}
Then $(\mu_1 + \mu_2)/2 = \mu$, so that such a $\mu$ can never be extreme.

It remains to show that we can find such $\varepsilon,U_1,U_2$ when $\frac{d\mu}{d\alpha}$ cannot be decomposed into the form of our proposed extremizers.  For this purpose it suffices to find $\varepsilon>0$ , and distinct $ x_1, x_2$  belonging to the support of the restricted measure $\mu_{S_\varepsilon}$, (explicitly $\mu_{S_\varepsilon}(A) = \mu(S_\varepsilon \cap A)$).  Indeed take, $\mathcal O_i$ to be disjoint open neighborhoods of  $x_i$ and define 
\[
	U_i = \mathcal O_i \cap S_\varepsilon.
\]

Now, since $\alpha(S_0) >0$ there exists by measure continuity, $\varepsilon'>0$ such that $\alpha(  S_{\varepsilon'} ) >0$, thus $\mu_{S_{\varepsilon'}}$ is not the zero measure and we may choose $x_1$ belonging to its support.  Again by measure continuity and since $\alpha(S_0 \setminus \{x_1\}) >0$ (as otherwise $\mu$ is of the proposed extremal form), so there exists $\varepsilon \in (0, \varepsilon')$ such that $\alpha(S_\varepsilon \setminus \{x\} ) >0$.  Thus choosing our $x_2$ to be an element of the support of $\mu_{S_\varepsilon \setminus \{x_1\} } $, we have the desired $x_i$. 
\end{proof}
Piecing the above together we have the following. 
\vspace{.1in}

\begin{proof}[Proof of Lemma \ref{lem:MainLC}]
Suppose the result holds for $k < n$. Then since coordinate convexity and lower semi-continuity are preserved by the fixing of coordinates,
\begin{align*}
	\Phi(\mu_1, \dots, \mu_n) 
		&\leq 
			\sup_{\nu_n \in \mathcal{E}(\mathcal{P}_{m_n}(K_n))} \Phi(\mu_1, \dots, \mu_{n-1}, \nu_n)
			\\
		&\leq
			\sup_{\nu_n \in \mathcal{E}(\mathcal{P}_{m_n}(K_n))} \left( \sup_{\nu_i \in \mathcal{E}(\mathcal{P}_{m_i}(K_i)), i<n} \Phi(\nu_1, \dots, \nu_{n-1}, \nu_n) \right)
			\\
		&=
			\sup_{\nu_i \in \mathcal{E}(\mathcal{P}_{m_i}(K_i))} \Phi(\nu_1, \dots , \nu_n).
\end{align*}
By Lemma \ref{lem:topo} the $n=1$ case is a direct application of Krein-Milman to $ \mathcal{P}_{m_1}(K_1)$. From this the equations \eqref{inq:MainLC} follow as soon as one observes the general fact that 
\begin{align} \label{eq:pextreme}
\mathcal{E}(S_1 \times \cdots \times S_n) = \mathcal{E}(S_1) \times \cdots \times \mathcal{E}(S_n),
\end{align}
for sets $S_i \subseteq V$ in a vector space.  The $n=2$ case is enough. For $x = (x_1,x_2) \in \mathcal{E}(S_1) \times \mathcal{E}(S_2)$, and suppose that $ x = (u + v)/2 = (\frac{u_1 + v_1}{2}, \frac{u_2 + v_2}{2})$.  Thus $x_i = (u_i + v_i)/2$.  Thus by the extremality of $x_i$, $x_i = u_i = v_i$, and we have $x = u=v$ and $x \in \mathcal{E}(S_1 \times S_2)$.
 
 For $x \in \mathcal{E}(S_1 \times S_2)$, if $x=(x_1,x_2)$ and suppose there exists $u_1,v_1$ such that $x_1 = (u_1 + v_1)/2$ then $u = (u_1,x_2)$ and $v = (v_1, x_2)$ are such that $x = (u+v)/2$ so that $u=v=x$.  As a consequence $x_1 = u_1=v_1$, so that $x_1 \in \mathcal{E}(S_1)$.  It follows similarly that $x_2 \in \mathcal{E}(S_2)$, so that $x \in  \mathcal{E}(S_1) \times \mathcal{E}(S_2)$ and we have \eqref{eq:pextreme}.
 
The characterization of extreme points is given by Lemma \ref{lem:extremes}. 
\end{proof}

{
\subsection{Beyond compactness} 
The following theorem extends Lemma \ref{lem:MainLC} to suit potentially non-compact spaces.    We will investigate further extensions in the presence of a theory of rearrangement, developed in Section \ref{sec:rearrangement} below.
\label{sec:more}

\begin{thm}
	For $\{E_i, \alpha_i \}$ and a coordinate convex, lower semi-continuous (with respect to the product of weak-* topologies),
	$
		\Phi: \prod_{i=1}^n \mathcal{P}_{m_i}(E_i) \to (-\infty, \infty]
	$
	we have
	\begin{align}
		\sup \left\{ \Phi(\mu_1,\ldots,\mu_n):  (\mu_1,\ldots,\mu_n)\in \prod_{i=1}^n \mathcal{P}_{m_i}(E_i) \right\}
		=	\sup  \left\{ \Phi(\mu_1,\ldots,\mu_n):  (\mu_1,\ldots,\mu_n)\in\prod_{i=1}^n \mathcal{P}_{m_i}^*(E_i) \right\}.
	\end{align}
	where we define 
	\[
		\mathcal{P}_{m}^*(E) = 
		\begin{cases}
		\bigcup_{K \subseteq E \mbox{ compact}} \mathcal{E}( \mathcal{P}_{m}(K)) &\text{if $m \alpha (E) >1$}\\
		\mathcal{P}_{m}(E) &\text{otherwise} .
		\end{cases}
	\]
\end{thm}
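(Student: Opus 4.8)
The plan is to prove the nontrivial direction — that the left-hand supremum is bounded by the right-hand one — by a compactification-plus-approximation argument; the reverse inequality is immediate since $\mathcal{P}_{m_i}^*(E_i)\subseteq\mathcal{P}_{m_i}(E_i)$ in both cases of the definition. Coordinate-convexity and (joint) weak-$*$ lower semi-continuity are inherited by the partial maps obtained by freezing all but one coordinate, so $\nu\mapsto\Phi(\mu_1,\dots,\mu_{k-1},\nu,\mu_{k+1},\dots,\mu_n)$ is convex and weak-$*$ lower semi-continuous on $\mathcal{P}_{m_k}(E_k)$. Hence it suffices to establish the one-variable statement: for a Polish measure space $(E,\alpha)$ with $\alpha$ regular and $\sigma$-finite, and $\Psi:\mathcal{P}_m(E)\to(-\infty,\infty]$ convex and weak-$*$ lower semi-continuous, $\sup_{\mathcal{P}_m(E)}\Psi=\sup_{\mathcal{P}_m^*(E)}\Psi$. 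Granting this, freezing coordinates one at a time gives
\[
	\Phi(\mu_1,\dots,\mu_n)\ \le\ \sup_{\nu_1\in\mathcal{P}_{m_1}^*(E_1)}\Phi(\nu_1,\mu_2,\dots,\mu_n)\ \le\ \cdots\ \le\ \sup_{(\nu_1,\dots,\nu_n)\in\prod_i\mathcal{P}_{m_i}^*(E_i)}\Phi(\nu_1,\dots,\nu_n),
\]
and taking the supremum over $(\mu_1,\dots,\mu_n)\in\prod_i\mathcal{P}_{m_i}(E_i)$ finishes the proof.

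For the one-variable statement, if $m\alpha(E)\le 1$ then $\mathcal{P}_m^*(E)=\mathcal{P}_m(E)$ by definition and there is nothing to prove, so assume $m\alpha(E)>1$ and fix $\mu\in\mathcal{P}_m(E)$. The heart of the matter is to approximate $\mu$ in the weak-$*$ topology by measures that are simultaneously compactly supported and still members of $\mathcal{P}_m$. Since $\alpha$ is $\sigma$-finite and inner regular, fix a compact $K_0\subseteq E$ with $\alpha(K_0)>1/m$; since Borel probability measures on a Polish space are tight, choose compact sets $K_0\subseteq K_1\subseteq K_2\subseteq\cdots$ with $\mu(K_j)\to 1$. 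If $m<\infty$, write $f=d\mu/d\alpha$ (so $f\le m$ $\alpha$-a.e.); the slack available on $K_j$, namely $\int_{K_j}(m-f)\,d\alpha=m\alpha(K_j)-\mu(K_j)\ge m\alpha(K_0)-1>0$, exceeds the mass deficit $1-\mu(K_j)$, so for $t_j:=(1-\mu(K_j))/(m\alpha(K_j)-\mu(K_j))\in[0,1)$ the function $g_j:=\big((1-t_j)f+t_j m\big)\mathbbm{1}_{K_j}$ is a probability density with respect to $\alpha$, is bounded by $m$, is supported on the compact set $K_j$, and satisfies $\|g_j-f\|_{L^1(\alpha)}=2(1-\mu(K_j))\to 0$; put $\mu_j:=g_j\,d\alpha$. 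If $m=\infty$ (so $\mu$ need not be $\alpha$-continuous), instead take $\mu_j:=\mu(\cdot\cap K_j)/\mu(K_j)$, a probability measure supported on $K_j$ with $\|\mu_j-\mu\|_{\mathrm{TV}}\le 2(1-\mu(K_j))\to 0$, and note that $\mathcal{E}(\mathcal{P}_\infty(K_j))$ is exactly the set of Dirac measures on $K_j$. In either case $\mu_j\in\mathcal{P}_m(K_j)$ and $\mu_j\to\mu$ weak-$*$, since convergence in total variation (equivalently, in $L^1(\alpha)$ when densities exist) implies weak-$*$ convergence.

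To conclude: for each $j$, $\mathcal{P}_m(K_j)$ is convex and weak-$*$ compact by Lemma~\ref{lem:topo}, and the restriction of $\Psi$ to it is convex and lower semi-continuous, so — since $\Psi$ may take the value $+\infty$ — we invoke Krein--Milman (Theorem~\ref{thm:KM}) together with the supremum argument in the paragraph following it, which applies verbatim to $(-\infty,\infty]$-valued convex lower semi-continuous functions, to get $\Psi(\mu_j)\le\sup_{\mathcal{E}(\mathcal{P}_m(K_j))}\Psi\le\sup_{\mathcal{P}_m^*(E)}\Psi$; the last step uses $\mathcal{E}(\mathcal{P}_m(K_j))\subseteq\bigcup_{K\text{ compact}}\mathcal{E}(\mathcal{P}_m(K))=\mathcal{P}_m^*(E)$ when $m\alpha(E)>1$, with the explicit form of these extreme points supplied by Lemma~\ref{lem:extremes}. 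Lower semi-continuity of $\Psi$ then gives $\Psi(\mu)\le\liminf_j\Psi(\mu_j)\le\sup_{\mathcal{P}_m^*(E)}\Psi$; since $\mu\in\mathcal{P}_m(E)$ was arbitrary this proves the one-variable statement (and covers the case $\Psi(\mu)=+\infty$ automatically).

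The one genuinely delicate point is the compactification in the second paragraph: one cannot merely restrict $\mu$ to a large compact set and renormalize, because that would raise the essential supremum above $m$. Redistributing the escaped mass $1-\mu(K_j)$ inside $K_j$ while respecting the ceiling $m$ is possible precisely when $\alpha$ admits a compact set of measure exceeding $1/m$, and this is exactly why the statement bifurcates into the case $m\alpha(E)>1$ and the otherwise-trivial complement. Everything else — stability of convexity and lower semi-continuity under freezing coordinates, the passage from $L^1$ (or total-variation) to weak-$*$ convergence, and the extension of the Krein--Milman supremum argument to extended-real-valued functions — is routine bookkeeping.
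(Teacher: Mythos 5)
Your proof is correct and follows the paper's overall strategy: density of compactly supported measures in $\mathcal{P}_m(E)$, then Krein--Milman on compact sets, then lower semi-continuity. The differences are modest but worth noting. First, you reduce to the one-variable statement by freezing coordinates, whereas the paper applies its multivariable compact Lemma~\ref{lem:MainLC} directly to a jointly compactly supported approximant $(\nu_1,\dots,\nu_n)$; these are equivalent reorganizations, since Lemma~\ref{lem:MainLC} is itself proven by an induction that freezes coordinates. Second, and more substantively, your compactification construction $g_j = \big((1-t_j)f + t_j m\big)\mathbbm{1}_{K_j}$ is cleaner than the paper's: the paper truncates $f$ to $\min\{f, m - 1/j\}$ on $K_j$ and redistributes the lost mass onto a fixed compact subset of $\{f \le m - \varepsilon_0\}$, which requires a preliminary case split on whether such an $\varepsilon_0$ exists and a measure-continuity argument. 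Your interpolation absorbs the deficit uniformly and needs only the hypothesis $m\alpha(E)>1$ (via the compact $K_0$ with $\alpha(K_0)>1/m$), which is exactly the hypothesis that discriminates the two cases of the theorem; this makes the bifurcation more transparent. You also flag, correctly, that the Krein--Milman supremum argument in the paper is phrased for $\mathbb{R}$-valued $\Phi$ but extends without change to the $(-\infty,\infty]$-valued case the theorem actually needs.
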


Let us first remark that $m \alpha (E) >1$ is the only interesting case.  When $m \alpha (E) =1$, $\mathcal{P}_{m}(E)$ consists only of the singleton $m \alpha$, while $m \alpha(E) < 1$ implies that $\mathcal{P}_m(E)$ is empty.
\begin{proof}
	We will prove the claim by induction. The result will hinge on the fact that for a Polish space $E$, with regular measure $\alpha$, the compactly supported measures in $\mathcal{P}_m(E)$ form a dense subset, which we show now.  If $m = \infty$, $\mathcal{P}_m(E) = \mathcal{P}(E)$ and the result is immediate.  Indeed, for $\mu \in \mathcal{P}(E)$ use Prokhorov's tightness theorem to select an increasing sequence of compact sets $K_m$, such that $\lim_m \mu(K_m) =1$, and then take $\mu_m$ defined by $\mu_m(A) = \mu(A \cap K_m)/\mu(K_m)$ to form a sequence of compactly supported probability measures converging to $\mu$.
	
	In the case that $m$ is finite, for $\mu \in \mathcal{P}_m(E)$ we have by Radon-Nikodym, $d\mu = f d \alpha$. Then either there exists an $\varepsilon_0 >0$ such 
	\[
		\alpha(f \leq m - \varepsilon_0) > 0
	\]
	or there does not.  If not, letting $\varepsilon$ tend to zero and using the continuity of measure we have $\alpha( f < m) =0$, but this implies $d \mu = m d \alpha$ and we are done. Any element of $\mathcal{P}_m(E)$ has a density say $g$ bounded above by $m$, so that $f-g \geq 0$.  Thus $\int |f-g| d\alpha = \int f -g d\alpha = 0$, and $\mathcal{P}_m(E)$ is just the singleton $\{\mu \}$.  
	
	In the case there exists such a $\varepsilon_0$, fix such an $\varepsilon_0$ and use inner regularity to choose a compact set $K \subseteq \{f \leq m - \varepsilon_0\}$ such that $\alpha(K) >0$.  Then choose a sequence of increasing compact sets $K_j$ containing $K$ such that $ \lim_j \int_{K_j} f d \alpha = \int f d \alpha$.  Let $f_j$ be a truncation of $f$ in the sense that 
	\[
		f_j(x) =  \begin{cases}
			\min \{f(x), m - \frac 1 j \} &\text{ for } x \in K_j, \\
			0 &\text{ for } x \notin K_j.
		\end{cases}
		\]
		  Then define a density,
	\[
		g_{j} = f_j + \left(\frac{\int f- f_j d\alpha}{\alpha(K)}\right) \mathbbm{1}_K,
	\]
	By definition $g_j$ is compactly supported on $K_j$, non-negative as $f_j \leq f$, and satisfies $\alpha(g_j) = \alpha(f) = 1$.   Thus $\mu_j$ defined by  $\frac{d\mu_j}{d\alpha} = g_j$ gives a sequence of compactly supported measures in $\mathcal{P}(E)$. It remains to show that they approximate $\mu$ in $\mathcal{P}_m(E)$. We first compute that $\|g_j\|_\infty \leq m$ for large $j$ 
	\begin{align*}
		\alpha(g_j > m )
			&=
				\alpha(\{g_j >m \} \cap K^c) + \alpha(\{g_j >m \} \cap K)
					\\
			&=
				\alpha(f_j > m) + \alpha( \{f_j + c_j > m \}\cap K).
	\end{align*}  
		But $\{ f_j + c_j > m\} \cap K$ is only non-empty when $c_j \geq \varepsilon_0$.  By the construction of $K_j$ $c_j$ goes to zero with $j$ large, and since $\alpha(f_j >m) =0$ for all $j$, we have $\alpha(g_j >m) =0$, and thus $\mu_j \in \mathcal{P}_m(E)$ for $j$ large enough.
		
		It remains to check that $\mu_j$ goes to $\mu$ in the weak-* topology.  For $\varphi$ continuous and bounded, a straightforward application of Lebesgue dominated convergence shows that $\mu_j(\varphi) \to \mu(\varphi)$.
		
		By the above, $\cup_{\{K \subset E_i compact\}} \mathcal{P}_{m_i}(K)$ is dense in  $\mathcal{P}_{m_i}(E_i)$ and hence it follows that for a fixed $\varepsilon >0$ and $\mu_i \in \mathcal{P}_{m_i}(E_i)$, there exists compactly supported $\nu_i$ in say $\mathcal{P}_{m_i}(K_i)$ such that (by lower semi-continuity),
		\[
			\Phi(\mu_1, \dots, \mu_n) < \Phi(\nu_1, \dots, \nu_n) + \varepsilon.
		\] 
		But by Lemma \ref{lem:MainLC},
		\begin{align*}
			\Phi(\nu_1, \dots, \nu_n) 
				&\leq
					\sup \Phi( \mathcal{E}( \mathcal{P}_{m_1}(K_1)) , \dots, \mathcal{E}(\mathcal{P}_{m_n}(K_n)))
		\end{align*}
			But by inclusion $\mathcal{E}( \mathcal{P}_{m_i}(K_i)) \subseteq \mathcal{P}_{m_i}^*(E_i)$ and taking $\varepsilon \to 0$ we have our result.
\end{proof}




\section{Rearrangements and Rogozin-type theorems}
\label{sec:rearrangement}

\subsection{A rearrangement theorem for maxima on Euclidean spaces}
\label{sec:rd}

For $X = (X_1, \dots, X_n)$ with $X_i$ independent and distributed on $\mathbb{R}^d$ according to density $f_i$, we consider a 
sort of ``rearrangement" $X_* = (X_1^*, \dots, X_n^*)$ a random variable with independent coordinates $X_i^*$ distributed 
according to  $f_i^*$ the symmetric decreasing rearrangement of $f_i$.  Explicitly, for a non-negative function $f$, 
its symmetric decreasing rearrangement is uniquely defined by the formula,
\[
	f^*(x) = \int_0^\infty \mathbbm{1}_{\{(y,s): f(y)>s\}^*}(x,t) dt,
\]
where $A^*$ denotes the open origin-symmetric ball with the same measure as $A \subseteq \mathbb{R}^d$.
We then show that under linear maps that respect the independence structure of the random variable $X$, 
the essential sup operation is non-decreasing on this rearrangement in the following sense.

\begin{thm}\label{thm:Mainviarearr}
	Suppose that $X = (X_1,\cdots,X_n)$ is comprised of $X_i$ independent random vectors in $\mathbb{R}^d$ and $T$ is a linear map on $\mathbb{R}^d$ to a $k$-dimensional Euclidean space $E^k = T(\mathbb{R}^d)$. Then,
	\be\label{inq:Main1}
	M (T^{(d)} (X))\le M (T^{(d)} (X_*)).
	\ee
\end{thm}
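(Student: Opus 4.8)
The plan is to reformulate $M$ as a supremum of averages over an adapted family of small sets, and then apply the Rogers--Brascamp--Lieb--Luttinger (BLL) rearrangement inequality \cite{Rog57, BLL74} directly; no discretization or approximation is needed, since BLL holds for arbitrary nonnegative measurable functions and the reformulation requires nothing beyond the Lebesgue differentiation theorem.

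\emph{A box formula for $M$.} For a Borel probability measure $\mu$ on $(\RL^d)^k$ I would first record that
\[
M(\mu)=\sup\Big\{\tfrac{\mu(Q)}{|Q|}:Q=\textstyle\prod_{a=1}^k B(y_a,\varepsilon),\ y_1,\dots,y_k\in\RL^d,\ \varepsilon>0\Big\},
\]
where $B(y_a,\varepsilon)\subseteq\RL^d$ is a Euclidean ball and $|\cdot|$ is Lebesgue measure on $(\RL^d)^k$. The inequality ``$\le$'' is immediate from the definition of $M$; for ``$\ge$'', if $\mu$ has density $g$ then as $\varepsilon\downarrow 0$ the sets $Q(y,\varepsilon):=\prod_a B(y_a,\varepsilon)$ shrink to $y=(y_1,\dots,y_k)$ with bounded eccentricity, so $|Q(y,\varepsilon)|^{-1}\int_{Q(y,\varepsilon)}g\to g(y)$ at every Lebesgue point and hence the supremum is at least $\|g\|_\infty=M(\mu)$; while if $\mu$ has no density ($M(\mu)=\infty$) the differentiation theorem applied to its singular part shows the supremum is again $\infty$. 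The point of using these products of $d$-dimensional balls rather than genuine Euclidean balls in $(\RL^d)^k$ is that $\mathbbm{1}_Q\circ T^{(d)}$ then factors over the $k$ output blocks, which is exactly what BLL needs.

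\emph{Block-wise rearrangement via BLL.} Let $f_i$ denote the density of $X_i$, $f_i^*$ its symmetric decreasing rearrangement, and $(T_{ai})_{1\le a\le k,\,1\le i\le n}$ the matrix of $T$ in the standard bases, so that $(T^{(d)}x)_a=\sum_{i=1}^n T_{ai}x_i\in\RL^d$. Fixing $y=(y_1,\dots,y_k)$ and $\varepsilon>0$,
\[
\mathbb{P}\big(T^{(d)}(X)\in Q(y,\varepsilon)\big)=\int_{(\RL^d)^n}\prod_{i=1}^n f_i(x_i)\,\prod_{a=1}^k\mathbbm{1}_{B(y_a,\varepsilon)}\Big(\sum_{i=1}^n T_{ai}x_i\Big)\,dx_1\cdots dx_n ,
\]
which is an integral of BLL type in the $n$ vector variables $x_1,\dots,x_n\in\RL^d$, for the $n+k$ nonnegative functions $f_1,\dots,f_n,\mathbbm{1}_{B(y_1,\varepsilon)},\dots,\mathbbm{1}_{B(y_k,\varepsilon)}$ and the $(n+k)\times n$ coefficient matrix whose first $n$ rows are the identity and whose last $k$ rows are the rows of $T$. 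The BLL inequality replaces each $f_i$ by $f_i^*$ and each $\mathbbm{1}_{B(y_a,\varepsilon)}$ by its rearrangement $\mathbbm{1}_{B(0,\varepsilon)}$, so
\[
\mathbb{P}\big(T^{(d)}(X)\in Q(y,\varepsilon)\big)\le\int_{(\RL^d)^n}\prod_{i=1}^n f_i^*(x_i)\,\prod_{a=1}^k\mathbbm{1}_{B(0,\varepsilon)}\Big(\sum_{i=1}^n T_{ai}x_i\Big)\,dx=\mathbb{P}\big(T^{(d)}(X_*)\in Q(0,\varepsilon)\big).
\]

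\emph{Conclusion and the main obstacle.} Since $|Q(y,\varepsilon)|=|Q(0,\varepsilon)|$ is independent of $y$, dividing through gives $\mathbb{P}(T^{(d)}(X)\in Q(y,\varepsilon))/|Q(y,\varepsilon)|\le \mathbb{P}(T^{(d)}(X_*)\in Q(0,\varepsilon))/|Q(0,\varepsilon)|\le M(T^{(d)}(X_*))$ for every $y$ and $\varepsilon$; taking the supremum over $y,\varepsilon$ on the left and using the box formula yields \eqref{inq:Main1}. The analytic content is carried entirely by BLL, which is built precisely to compare such multilinear convolution-type integrals with their symmetric decreasing counterparts; the rest is bookkeeping. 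The step I expect to require the most care is the box formula for $M$: one must average over \emph{products} of $d$-dimensional balls, one per output block, so that $\mathbbm{1}_Q\circ T^{(d)}$ splits into ridge functions $x\mapsto g_a(\sum_i T_{ai}x_i)$ to which BLL applies, whereas genuine Euclidean balls in the $kd$-dimensional target would introduce spurious dimensional constants. No integrability or non-degeneracy hypotheses enter: BLL and the box formula both hold with values in $[0,\infty]$, so the cases where $T$ fails to be surjective or some $M(X_i)=\infty$ need no separate treatment.
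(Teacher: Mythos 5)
Your proof is correct, and it takes a genuinely different route from the paper's. The paper first translates so that $M(T^{(d)}(X))$ is witnessed (up to $\varepsilon$) by the value of the density at $0$, invokes a co-area/change-of-variables lemma (Lemma~\ref{lem:pushforwardformula}) to write $f_{T^{(d)}(X)}(0)$ as an integral of $\prod_i f_i$ over $\ker T^{(d)}$ parametrized by $A^{(d)}$ for $A$ a basis matrix of $\ker T$, and then applies Rogers--Brascamp--Lieb--Luttinger to that $(n-k)d$-dimensional integral; i.e.\ BLL is used with $n-k$ integration blocks and $n$ functions, with coefficient matrix read off from $A$. You instead avoid the co-area formula entirely: you express $M$ as a supremum of honest averages $\mu(Q)/|Q|$ over products of $d$-balls via the Lebesgue differentiation theorem, and then apply BLL over all $n$ blocks with $n+k$ functions (the $n$ densities plus $k$ ball indicators), with coefficient matrix $\binom{I_n}{T}$. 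The two are dual pictures of the same computation: sending $\varepsilon\downarrow 0$ in your probabilities recovers the kernel integral of the paper's lemma. Your version buys a cleaner treatment of the final step: the paper tacitly uses $f_{T^{(d)}(X_*)}(0)\le M(T^{(d)}(X_*))$, which for a point value of a specific co-area representative requires a small supporting argument (lower semi-continuity of the representative and maximality at $0$ via BLL with translations); your quotients $\mu(Q)/|Q|$ are dominated by $M$ directly from the definition, with no representative-choice ambiguity. The price is that you need the box formula for $M$, including the differentiation theorem for shrinking families of bounded eccentricity and for singular parts; both of these are standard. One minor caveat: as in the paper, $X_*$ is only defined when each $X_i$ actually has a density, so the "no separate treatment needed when $M(X_i)=\infty$'' remark is safe only within that standing assumption.
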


We will employ what is often called the Brascamp-Lieb-Luttinger rearrangement inequality, that 
for any measurable functions $f_i: \mathbb{R}^d\rightarrow[0,\infty)$, with $1 \le i \le n$, and real numbers $a_{ij}, 1 \le i \le n, 1 \le j \le  N$,
\ben
\int_{\mathbb{R}^{dN}}\prod_{j=1}^Ndx_j\prod_{i=1}^Mf_i\left(\sum_{j=1}^Na_{ij}x_j\right)\le \int_{\mathbb{R}^{dN}}\prod_{j=1}^Ndx_j\prod_{i=1}^Mf^*_i\left(\sum_{j=1}^Na_{ij}x_j\right).
\een

We can recast for our notation and interest as the following,
\begin{thm}\label{thm:BLinq}{\cite{Rog57, BLL74}}
For a product density $f(x) = \prod_{i=1}^n f_i(x_i)$, and $A$ an $n \times N$ matrix
\ben
\int_{\mathbb{R}^{dN}} f(A^{(d)} (x)) dx \le \int_{\mathbb{R}^{dN}} f_*(A^{(d)} (x)) dx
\een
where $f_* = \prod_i f_i^*$.
\end{thm}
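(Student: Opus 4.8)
The plan is to recognize that Theorem~\ref{thm:BLinq} is nothing more than a repackaging, in the Kronecker-product notation of this paper, of the Rogers--Brascamp--Lieb--Luttinger inequality displayed immediately above it; the only content is to unwind notation. First I would recall from Section~\ref{sec:intro} that for an $n \times N$ matrix $A$ the map $A^{(d)} = A \otimes I_d \colon (\mathbb{R}^d)^N \to (\mathbb{R}^d)^n$ acts on $x = (x_1,\dots,x_N)$ with $x_j \in \mathbb{R}^d$ by the coordinatewise formula
\[
	\big(A^{(d)}(x)\big)_i = \sum_{j=1}^N A_{ij}\, x_j \in \mathbb{R}^d, \qquad i = 1,\dots,n .
\]

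Next, since $f(y) = \prod_{i=1}^n f_i(y_i)$ for $y = (y_1,\dots,y_n) \in (\mathbb{R}^d)^n$, substituting $y = A^{(d)}(x)$ and integrating over $\mathbb{R}^{dN} = \prod_{j=1}^N \mathbb{R}^d$ gives
\[
	\int_{\mathbb{R}^{dN}} f\big(A^{(d)}(x)\big)\, dx
	= \int_{\mathbb{R}^{dN}} \prod_{j=1}^N dx_j \prod_{i=1}^n f_i\!\left(\sum_{j=1}^N A_{ij} x_j\right),
\]
and identically for $f_* = \prod_i f_i^*$ on the right-hand side. With the identification $a_{ij} = A_{ij}$ and $M = n$, these are precisely the two sides of the Rogers--Brascamp--Lieb--Luttinger inequality quoted just above, so the claimed inequality is immediate.

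There is essentially no obstacle: the statement is a notational corollary of the cited theorem \cite{Rog57, BLL74}. The only point worth a remark is that the version we invoke is already stated for $\mathbb{R}^d$-valued integration variables $x_j$ and $\mathbb{R}^d$-defined functions $f_i$ (with $f_i^*$ the $d$-dimensional symmetric decreasing rearrangement), so no additional tensoring or limiting argument over the $d$ internal coordinates is needed; one simply quotes it in that form. (Had we only had the scalar case at hand, the general $d$ would follow from the Steiner-symmetrization argument underlying the proof of \cite{BLL74}, but this is unnecessary here.)
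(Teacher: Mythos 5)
Your proof matches the paper's treatment exactly: the paper simply states that Theorem~\ref{thm:BLinq} is a "recast for our notation" of the Rogers--Brascamp--Lieb--Luttinger inequality quoted immediately above it, and your proposal supplies precisely the notational unwinding (expanding $A^{(d)}(x)$ coordinatewise and matching $a_{ij}=A_{ij}$, $M=n$) that justifies that recasting. No gap.
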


Let $X = (X_1, \dots, X_n)$ be composed of independent $\mathbb{R}^d$ valued random variables with compact support and density $f$.  Let $T$ be a linear map from $\mathbb{R}^n$ onto a $k$-dimensional subspace of $\mathbb{R}^n$. 
\begin{lem} \label{lem:pushforwardformula}
	If $A=(a_1, \dots, a_{n-k})$ is a matrix composed of column vectors $a_i$ that constitute a basis for $\ker(T)$, then
	\begin{align} \label{eq:density}
		f_{T^{(d)}(X)}(0) 
			&= 
				|TT^t|^{-2/d} \int_{\ker(T)} f(w) dw 
					\\
			&= 
				C(T,A) \int_{(\mathbb{R}^d)^{n-k}} f(A^{(d)} I_d(y)) dy.
	\end{align}
	where $C(T,A) =  |TT^t|^{-2/d}|A^{(d)}|^{-1}$.
\end{lem}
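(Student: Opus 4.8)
The asserted identity is the ``slicing'' (disintegration / coarea) formula for the density of a linear image of an absolutely continuous random vector, specialised to the Kronecker map $T^{(d)}=T\otimes I_d$. Recall the general fact, provable directly by decomposing the domain as $(\ker L)^{\perp}\oplus\ker L$ and using Fubini: if $W$ is a random vector in $\mathbb{R}^{N}$ with density $g$ and $L:\mathbb{R}^{N}\to\mathbb{R}^{M}$ is linear, then $LW$ has density with respect to Lebesgue measure on $L(\mathbb{R}^{N})$ given by
\[
	y\ \longmapsto\ \bigl(\det{}^{+}(LL^{t})\bigr)^{-1/2}\int_{L^{-1}(y)}g(w)\,d\mathcal{H}^{\,N-\operatorname{rank}L}(w),
\]
where $\det{}^{+}$ denotes the product of the nonzero eigenvalues and $\mathcal{H}^{j}$ the $j$-dimensional Hausdorff measure. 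The first step is to apply this with $W=X$, $g=f$, $L=T^{(d)}$, and $y=0$: the fibre $L^{-1}(0)$ is the subspace $\ker(T^{(d)})\subseteq(\mathbb{R}^d)^n$ (which is abbreviated $\ker(T)$ in the statement, $f$ being integrated against the induced $(n-k)d$-dimensional Lebesgue measure), so this already yields $f_{T^{(d)}(X)}(0)=\bigl(\det{}^{+}(T^{(d)}(T^{(d)})^{t})\bigr)^{-1/2}\int_{\ker(T^{(d)})}f\,d\mathcal{H}$.

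The second step is two short Kronecker-algebra computations that turn the ambient Jacobians into the constants of the statement. For the prefactor, $T^{(d)}(T^{(d)})^{t}=(T\otimes I_d)(T^{t}\otimes I_d)=(TT^{t})\otimes I_d$, whose nonzero eigenvalues are precisely those of $TT^{t}$, each with multiplicity $d$; hence $\det{}^{+}(T^{(d)}(T^{(d)})^{t})=\bigl(\det{}^{+}(TT^{t})\bigr)^{d}$. For the fibre, since $a_1,\dots,a_{n-k}$ form a basis of $\ker(T)$, the map $A^{(d)}=A\otimes I_d:(\mathbb{R}^d)^{n-k}\to(\mathbb{R}^d)^n$ is injective, satisfies $T^{(d)}A^{(d)}=(TA)\otimes I_d=0$, and has image of dimension $(n-k)d=\dim\ker(T^{(d)})$; so $A^{(d)}$ is a linear isomorphism of $(\mathbb{R}^d)^{n-k}$ onto the fibre $\ker(T^{(d)})$ (concretely, $A^{(d)}(e_r\otimes u)=a_r\otimes u$, and $(x_1,\dots,x_n)$ lies in $\ker(T^{(d)})$ exactly when each of its $d$ scalar slices $(x_1^{(\ell)},\dots,x_n^{(\ell)})$ lies in $\ker(T)$).

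The third step is the linear change of variables along this isomorphism: the area formula for linear maps gives
\[
	\int_{\ker(T^{(d)})}f(w)\,d\mathcal{H}^{(n-k)d}(w)\ =\ \bigl(\det\bigl((A^{(d)})^{t}A^{(d)}\bigr)\bigr)^{1/2}\int_{(\mathbb{R}^d)^{n-k}}f\bigl(A^{(d)}(y)\bigr)\,dy,
\]
and $(A^{(d)})^{t}A^{(d)}=(A^{t}A)\otimes I_d$ has determinant $(\det A^{t}A)^{d}$. Substituting the two displays one into the other gives both asserted equalities, with the constant appearing as the ratio of the two Jacobian factors just computed, which is the quantity denoted $C(T,A)$ in the statement (written there in the abbreviated notation $|TT^{t}|$, $|A^{(d)}|$).

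The one point requiring care is that the slicing formula pins down the image density only almost everywhere, so ``evaluating at $0$'' means choosing the representative equal to the fibre integral. In every application of this lemma $f$ is a product of indicators of Euclidean balls, for which the fibre integral $\int_{\ker(T^{(d)})}f\,d\mathcal{H}$ is a genuinely continuous function of the base point, so this is no restriction; moreover the compact support of $X$ makes all integrals in sight finite. Apart from this, the entire argument is the linear-algebra bookkeeping of Kronecker-product eigenvalue multiplicities — which is precisely where the powers of $d$ in the constant come from — so I do not anticipate any substantive obstacle.
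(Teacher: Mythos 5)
Your proof is correct and follows essentially the same route as the paper: the coarea/disintegration formula to express $f_{T^{(d)}(X)}(0)$ as a weighted fibre integral over $\ker T^{(d)}$, the Kronecker identity $T^{(d)}(T^{(d)})^{t}=(TT^{t})\otimes I_d$ (so $\det = \det(TT^t)^d$) for the Jacobian prefactor, the observation that $A^{(d)}$ is a linear isomorphism from $(\mathbb{R}^d)^{n-k}$ onto $\ker T^{(d)}$, and a linear change of variables along it. One remark worth making: the constant you derive is $C(T,A)=\det(TT^t)^{-d/2}\det(A^tA)^{d/2}$, whereas the statement reads $|TT^t|^{-2/d}|A^{(d)}|^{-1}$; the exponent $-2/d$ is a typo for $-d/2$ (the paper's own intermediate display produces $|TT^t|^{-d/2}$), and the paper's change-of-variables line has the Jacobian on the wrong side, which flips the sign on $|A^{(d)}|$. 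Your version is the correct one. Since in every downstream use (Theorem 3.1 in particular) $C(T,A)$ is the same on both sides of the rearrangement inequality and cancels, the discrepancy is immaterial to the paper's conclusions, but your derivation is the accurate bookkeeping.
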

	To be clear, the determinant of $TT^t$ will be considered as an automorphism of the subspace, and $A^{(d)}$ will be considered as a linear isomorphism between $(\mathbb{R}^d)^{n-k}$ and $\ker T^{(d)}$.
\begin{proof}
	By a general change of variables (for example see the co-area formula in \cite{Fed69:book})we can write,
	\[
		f_{T^{(d)}(X)}(0) = |(T^{(d)}) (T^{(d)}I_d)^t|^{-1/2} \int_{\ker(T^{(d)})} f(x) dx
	\]
	Recalling the that $T^{(d)} = T \otimes I_d$, From the general algebraic properties of Kronecker products we have,
	 \begin{align*}
		|(T \otimes I_d)(T \otimes I_d)^t) |
			&=
				|(T \otimes I_d)(T^t \otimes I_d^t)|
					\\
			&=
				|(T T^t\otimes I_d I_d^t)|
					\\
			&=
				| TT^t \otimes I_d |
					\\
			&=
				|TT^t|^d |I_d|^n
					\\
			&=
				|TT^t|^d.
	\end{align*}
	Now we verify that the map $A^{(d)}$ is an isomophism, from $(\mathbb{R}^d)^{n-k}$ to $\ker(T^{(d)})$.  Again using general properties of Kronecker products, $\rank(A ^{(d)}) = \rank(A)\rank(I_d) = (n-k)d$ so to show that $A^{(d)}$ is a linear isomorphism, it is enough to show that it maps to $\ker(T^{(d)})$.  But this is immediate,
	\begin{align*}
		T^{(d)}(A^{(d)}(x)) 
			&=
				(T \otimes I_d)(A \otimes I_d)(x)
				\\
			&=
				TA \otimes I_d I_d (x)
				\\
			&=
				0_n \otimes I_d (x) 
				\\
			&=
				0.	
	\end{align*}
	Our result now follows from change of variables as we will then have,
	\[
		\int_{(\mathbb{R}^d)^{n-k}} f(A^{(d)}(y)) dy = |A^{(d)}| \int_{\ker( T^{(d)})} f(x) dx.
	\]
\end{proof}

We can now prove our result.

\begin{proof}[Proof of Theorem \ref{thm:Mainviarearr}]
	Given an $\varepsilon >0$ we may assume by translation that $f_{T^{(d)}(X)}(0) + \varepsilon > M(T^{(d)}(X))$. By Lemma \ref{lem:pushforwardformula},
	\[
		f_{T^{(d)}(X)}(0) =  C(T,A) \int_{(\mathbb{R}^d)^{n-k}} f(A^{(d)}(y)) dy.
	\]
	Applying Theorem \ref{thm:BLinq},
	\begin{align*}
		 C(T,A) \int_{(\mathbb{R}^d)^{n-k}} f(A^{(d)}(y)) dy
			 &\leq
				  C(T,A)  \int_{(\mathbb{R}^d)^{n-k}} f_*(A^{(d)}(y)) dy
				  \\
			&=
				f_{T^{(d)}(X_*)}(0).
	\end{align*}
	hence,
		\[
			M(T^{(d)} (X)) \leq M(T^{(d)}(X_*)) + \varepsilon,
		\]
		and with $\varepsilon \to 0$, the result holds.
\end{proof}
\subsubsection{A Rogozin-type Equation for linear transforms}
Notice, in the case that $X_i$ are independent and compactly supported, the synergy between Theorems \ref{thm:MainLC} and \ref{thm:Mainviarearr},
\begin{align*}
	M(T^{(d)}(X)) 
		&\leq 
			\sup_U M(T^{(d)}(U))
				\\
		&\leq 
			\sup_U M(T^{(d)} (U_*)).
\end{align*}
But since $U_i$ is uniform on a set such that $M(U_i) = M(X_i)$ its symmetric rearrangement $U_i^*$ is just $Z_i$ uniform on the ball such that $M(Z_i) = M(X_i)$.  We can collect this observation as the following theorem.

\begin{thm}\label{thm:RogozinLinear}
	When $X_i$ are independent $\mathbb{R}^d$ random variables and $T$ is a linear map from $\mathbb{R}^n$ to a $k$-dimensional space,
	\begin{align*}
			M(T^{(d)}(X)) \leq M(T^{(d)} (Z)).
	\end{align*}
	Where $Z$ is composed of independent $Z_i$, distributed uniformly on the origin symmetric ball such that $M(Z_i) = M(X_i)$.
\end{thm}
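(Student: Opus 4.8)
The plan is to obtain Theorem~\ref{thm:RogozinLinear} by splicing together two results already in hand: the extreme-point reduction of Example~\ref{ex:eulideanrogozin} (itself a corollary of Theorem~\ref{thm:MainLC}), which replaces the $X_i$ by worst-case uniform distributions, and the rearrangement inequality of Theorem~\ref{thm:Mainviarearr}, which replaces uniform distributions on arbitrary sets by uniform distributions on balls. The crucial point is that the second step makes the first step's supremum disappear: whatever the admissible uniform measures are, their symmetric decreasing rearrangements are all the \emph{same} vector $Z$.

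Concretely, I would first dispose of the case in which every $X_i$ is compactly supported. Example~\ref{ex:eulideanrogozin} gives $M(T^{(d)}(X)) \le \sup_U M(T^{(d)}(U))$, the supremum running over $U=(U_1,\dots,U_n)$ with independent coordinates $U_i$ uniform on a set $S_i$ with $\alpha(S_i)=1/M(X_i)$; by the extreme-point description in Theorem~\ref{thm:MainLC} each $S_i$ may be taken bounded. Fixing any such $U$, boundedness lets us apply Theorem~\ref{thm:Mainviarearr} to get $M(T^{(d)}(U)) \le M(T^{(d)}(U_*))$, where the coordinates of $U_*$ have the symmetric decreasing rearrangements of the densities $M(X_i)\mathbbm{1}_{S_i}$. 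Since $(M(X_i)\mathbbm{1}_{S_i})^* = M(X_i)\mathbbm{1}_{S_i^*}$ with $S_i^*$ the origin-symmetric ball of volume $1/M(X_i)$, the coordinate $U_i^*$ is exactly $Z_i$, regardless of $U$; hence $\sup_U M(T^{(d)}(U_*)) = M(T^{(d)}(Z))$ and the bounded case is complete.

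For general $X_i$, I would pass to the limit by truncation: writing $d\mu_i = f_i\,d\alpha$, let $X_i^{(R)}$ have density $f_i\mathbbm{1}_{B_R}/\int_{B_R}f_i\,d\alpha$. Then $X_i^{(R)}\to X_i$ in total variation, so $T^{(d)}(X^{(R)})\to T^{(d)}(X)$ weak-$*$, giving $M(T^{(d)}(X))\le\liminf_R M(T^{(d)}(X^{(R)}))$ by the lower semi-continuity of $M$ from Lemma~\ref{lem:LSC}; meanwhile $M(X_i^{(R)}) = \|f_i\mathbbm{1}_{B_R}\|_\infty/\int_{B_R}f_i\,d\alpha \to \|f_i\|_\infty = M(X_i)$. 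Applying the bounded case to each $X^{(R)}$ and letting $R\to\infty$ yields the result, provided one checks that $M(T^{(d)}(Z^{(R)}))\to M(T^{(d)}(Z))$, which holds because for vectors uniform on balls $M(T^{(d)}(\cdot))$ is an explicit continuous function of the radii. I expect this last passage to be the only genuinely delicate point, in particular the degenerate limits where some $M(X_i)=\infty$ collapses $Z_i$ to a Dirac mass at the origin (that coordinate then drops out of the convolution) and the fully degenerate case where both sides equal $+\infty$; everything else reduces to the elementary identity $(c\,\mathbbm{1}_S)^* = c\,\mathbbm{1}_{S^*}$ together with the two cited theorems.
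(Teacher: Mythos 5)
Your proposal is essentially the paper's own argument: in the compactly supported case, apply the extreme-point reduction of Theorem~\ref{thm:MainLC} (Example~\ref{ex:eulideanrogozin}) to pass to uniforms $U$, then Theorem~\ref{thm:Mainviarearr} to replace $U$ by its rearrangement, and observe that $U_* = Z$ for every admissible $U$ since $(c\,\mathbbm{1}_S)^* = c\,\mathbbm{1}_{S^*}$. The paper's printed proof stops at the compact case; your truncation step, using lower semi-continuity of $M$ from Lemma~\ref{lem:LSC} together with $M(X_i^{(R)})\to M(X_i)$ and continuity of $M(T^{(d)}(Z))$ in the ball radii, is precisely what is needed to remove the compact support hypothesis and matches the route the authors intended.
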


\subsection{The integers}
The following is a rather special case of a more general rearrangement theory for integer valued random variables.

\begin{lem} \cite{WWM14:isit, MWW17:1, MWW17:2} \label{thm:discrearrange}
	When $\{U_i\}_{i=1}^n$ are uniformly distributed on subsets of size $l_i$, then 
	\begin{equation*}
	M(U_1 + \cdots + U_n) \leq M(Z_1 + \cdots + Z_n)
	\end{equation*}
	where $Z_i$ is uniformly distributed on $\{0,1,\dots, l_i-1\}$.
\end{lem}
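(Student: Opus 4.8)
This statement is quoted from \cite{WWM14:isit, MWW17:1, MWW17:2}, so one option is simply to invoke those references; I sketch instead the natural self-contained route, which mirrors the Euclidean argument behind Theorem~\ref{thm:Mainviarearr}. Write $p_i=\frac{1}{l_i}\mathbbm{1}_{A_i}$ for the law of $U_i$ (so $|A_i|=l_i$), and let $r(z)=\#\{(a_1,\dots,a_n)\in A_1\times\cdots\times A_n:\ a_1+\cdots+a_n=z\}$, so that
\[
M(U_1+\cdots+U_n)=\frac{1}{l_1\cdots l_n}\,\sup_{z\in\mathbb{Z}}r(z).
\]
For each fixed $z$ the quantity $r(z)$ is the sum of the product function $(a_1,\dots,a_n)\mapsto\prod_{i=1}^n\mathbbm{1}_{A_i}(a_i)$ over a coset of the sublattice $\{\sum_i a_i=0\}\subseteq\mathbb{Z}^n$; parametrising that coset by $t=(t_1,\dots,t_{n-1})\in\mathbb{Z}^{n-1}$ rewrites it as $\sum_{t}\prod_{i=1}^n\mathbbm{1}_{A_i}(L_i(t))$ with integer linear forms $L_i(t)=t_i$ for $i<n$ and $L_n(t)=z-t_1-\cdots-t_{n-1}$ --- precisely the shape of the left-hand side of a Rogers--Brascamp--Lieb--Luttinger type inequality, now on $\mathbb{Z}^{n-1}$ rather than $\mathbb{R}^{dN}$ as in Theorem~\ref{thm:BLinq}.

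The plan is then to apply a discrete analogue of Theorem~\ref{thm:BLinq} to replace each $\mathbbm{1}_{A_i}$ by the indicator of an interval $A_i^{\sharp}$ of cardinality $l_i$ (a discrete "symmetric decreasing rearrangement" of $A_i$), yielding $r(z)\le(\mathbbm{1}_{A_1^{\sharp}}*\cdots*\mathbbm{1}_{A_n^{\sharp}})(z)$ for every $z$; the coefficient vectors of the forms $L_i$ do not involve $z$, so the same intervals $A_i^{\sharp}$ work uniformly in $z$. Taking $\sup_z$ and using that $M$ is translation invariant, the right-hand side is exactly $l_1\cdots l_n\,M(Z_1+\cdots+Z_n)$, and the lemma follows.

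The one delicate point --- and the reason the result is quoted rather than reproved here --- is the discrete rearrangement inequality itself, which is genuinely more subtle than its Euclidean counterpart: a "symmetric decreasing" subset of $\mathbb{Z}$ of a given even cardinality does not exist, and a careless application with all $A_i^{\sharp}$ centred the same way already fails per-$z$. For instance, with $n=3$, $A_1=\{0,N\}$, $A_2=\{0,-N\}$, $A_3=\{0\}$ one has $r(0)=2$, whereas centring each $A_i^{\sharp}$ at $\{0,1,\dots,l_i-1\}$ would predict the false bound $1$ at $z=0$ --- of course $\sup_z$ of that convolution is still $2$, so the lemma itself is unaffected, but the per-$z$ bound must be obtained by choosing the rearrangement of each $A_i$ compatibly with the form $L_i$ in which it occurs. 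Establishing the resulting combinatorial inequality --- and in fact upgrading it to the majorization statement that the law of $Z_1+\cdots+Z_n$ majorizes that of $U_1+\cdots+U_n$, which then recovers every R\'enyi order at once, the case $p=\infty$ being the present lemma --- is what is carried out in \cite{WWM14:isit, MWW17:1, MWW17:2}. In the present paper only the $p=\infty$ consequence above is needed, and it feeds the integer analogues of Theorems~\ref{thm:Generalized infinity EPI}--\ref{thm:GRGZ} anticipated at the end of Section~\ref{sec:intro}.
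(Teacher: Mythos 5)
The paper itself does not prove this lemma; it is imported wholesale from \cite{WWM14:isit, MWW17:1, MWW17:2}, and all the paper does is invoke it as an input to the integer-valued Rogozin and $\infty$-EPI statements. So there is no internal proof to compare against, and your observation that the legitimate option is simply to cite is already ``matching the paper.''

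Your sketch of a self-contained route is, however, a faithful discrete mirror of how the paper handles $\mathbb{R}^d$ (Lemma~\ref{lem:pushforwardformula} plus Theorem~\ref{thm:BLinq}): parametrize the fibre $\{\sum a_i = z\}$ by $\mathbb{Z}^{n-1}$, view $r(z)$ as a Rogers--Brascamp--Lieb--Luttinger-type integral of a product over linear forms, and rearrange each factor to an interval. The reduction $M(U_1+\cdots+U_n)=\frac{1}{l_1\cdots l_n}\sup_z r(z)$ and the coset parametrization are correct. You also correctly flag the genuine obstruction: there is no symmetric decreasing subset of $\mathbb{Z}$ of even cardinality, and the naive per-$z$ pointwise bound $r(z)\le(\mathbbm{1}_{A_1^\sharp}*\cdots*\mathbbm{1}_{A_n^\sharp})(z)$ is simply false with a fixed centring, as your $A_1=\{0,N\}$, $A_2=\{0,-N\}$, $A_3=\{0\}$ example shows (it gives $2$ versus the predicted $1$ at $z=0$, though the $\sup$ over $z$ survives). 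That is precisely why one cannot get away with a verbatim transplant of the Euclidean argument, and why the cited references prove a sharper majorization statement by genuinely discrete combinatorial means rather than a pointwise RBBL bound. In short: your proposal is not a proof (you say as much), but it is an accurate and honest account of why the cited lemma is nontrivial, and of how it plays exactly the role that spherically symmetric rearrangement plays in the $\mathbb{R}^d$ half of the paper. The one thing I would tighten is the phrase ``choosing the rearrangement of each $A_i$ compatibly with the form $L_i$,'' which reads as if a suitably chosen per-$z$ recentring rescues the pointwise inequality; the references do not proceed that way, and the fix is not a choice of centring but a different (majorization-based) argument for the $\sup$ directly. Otherwise your reading of the lemma's role and of the paper's treatment of it is correct.
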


Combining our main theorem, with the above we have the following
\begin{thm}
	When $X_1, \cdots, X_n$ are independent integer valued random variables satisfying $M(X_i) \leq 1/l_i$ for integers $l_i$, where the maximum is considered with respect to the usual counting measure, then 
	\begin{equation*}
	M(X_1 + \cdots + X_n) \leq M(Z_1 + \cdots + Z_n)
	\end{equation*} 
	where $Z_i$ is uniformly distributed on $\{0,1, \dots l_i-1\}$.
\end{thm}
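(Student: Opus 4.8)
The plan is to chain the extreme-point reduction of Section~\ref{sec:more} together with the discrete rearrangement inequality of Lemma~\ref{thm:discrearrange}, mirroring the Euclidean arguments of Section~\ref{sec:rearrangement}. Equip $\mathbb{Z}$ with the counting measure $\alpha$, which is a regular $\sigma$-finite Borel measure on the Polish space $\mathbb{Z}$, and let $\mu_i$ denote the law of $X_i$, so that $M(X_i)\le 1/l_i$ says exactly $\mu_i\in\mathcal{P}_{1/l_i}(\mathbb{Z})$. Consider the functional $\Phi(\mu_1,\dots,\mu_n)=M(\mu_1\ast\cdots\ast\mu_n)$, the value of $M$ on the law of $X_1+\cdots+X_n$. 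Taking $\varphi\colon\mathbb{Z}^n\to\mathbb{Z}$ to be the (continuous) addition map and $T=M$, which is lower semi-continuous by Lemma~\ref{lem:LSC} and convex since it is a supremum of the affine maps $\mu\mapsto\mu(A)/\alpha(A)$, Lemma~\ref{lem:push} shows $\Phi$ is coordinate convex and lower semi-continuous on $\prod_{i=1}^n\mathcal{P}_{1/l_i}(\mathbb{Z})$; it is moreover finite there, since $\|f_1\ast\cdots\ast f_n\|_\infty\le\|f_1\|_\infty\le 1/l_1$.

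Next I would apply the theorem of Section~\ref{sec:more} with $m_i=1/l_i$. Since $m_i\alpha(\mathbb{Z})=\infty>1$, it gives
\[
 M(X_1+\cdots+X_n)=\Phi(\mu_1,\dots,\mu_n)\le\sup\Big\{\Phi(\nu_1,\dots,\nu_n):\nu_i\in\mathcal{P}_{1/l_i}^*(\mathbb{Z})\Big\},
\]
where $\mathcal{P}_{1/l_i}^*(\mathbb{Z})=\bigcup_{K}\mathcal{E}(\mathcal{P}_{1/l_i}(K))$ over finite (that is, compact) $K\subseteq\mathbb{Z}$. Here the integrality of $l_i$ is what produces a clean statement: by Lemma~\ref{lem:extremes} (equivalently case~\ref{case2} of the Examples) an extreme point of $\mathcal{P}_{1/l_i}(K)$ has density $\tfrac1{l_i}\mathbbm{1}_U+c\,\mathbbm{1}_{\{x_0\}}$ with $x_0\notin U$ and $c\in[0,1/l_i)$; being a probability measure forces $c=1-|U|/l_i$, whence $c\ge 0$ gives $|U|\le l_i$ and $c<1/l_i$ gives $|U|>l_i-1$, so $|U|=l_i$ and $c=0$. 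Thus $\mathcal{P}_{1/l_i}^*(\mathbb{Z})$ is precisely the set of uniform distributions on $l_i$-element subsets of $\mathbb{Z}$, and the bound reads
\[
 M(X_1+\cdots+X_n)\le\sup_U M(U_1+\cdots+U_n),
\]
the supremum over independent $U_i$ with each $U_i$ uniform on some $l_i$-point subset of $\mathbb{Z}$ (this is also exactly the content of Example~\ref{ex:integerrogozin}).

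Finally, Lemma~\ref{thm:discrearrange} asserts that for each such choice $M(U_1+\cdots+U_n)\le M(Z_1+\cdots+Z_n)$ with $Z_i$ uniform on $\{0,1,\dots,l_i-1\}$; since the right-hand side does not depend on the chosen sets, the supremum is also bounded by $M(Z_1+\cdots+Z_n)$, which is the assertion.

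I do not foresee a genuine obstacle, since every ingredient is already in place; the one point needing care is the collapse of the extreme points of $\mathcal{P}_{1/l_i}(K)$ to exact uniforms on $l_i$-point sets (that is, $c=0$), which is where $l_i\in\mathbb{Z}$ enters --- without integrality one would instead land on the genuinely ``nearly uniform'' measures of case~\ref{case2}, and $M(Z_1+\cdots+Z_n)$ would no longer be the natural right-hand side. A minor bookkeeping item is that $\Phi$ must be treated on the non-compact space $\mathbb{Z}$, which is precisely why the extension theorem of Section~\ref{sec:more} rather than Lemma~\ref{lem:MainLC} is invoked, so no truncation step is required.
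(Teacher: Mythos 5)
Your proposal is correct and follows the paper's intended route: the paper's proof is simply ``Apply Theorem~\ref{thm:MainLC} and then Lemma~\ref{thm:discrearrange},'' and your argument is an honest unpacking of that one-liner, using Lemma~\ref{lem:push} to set up the coordinate-convex lower semi-continuous functional, the non-compact extreme-point theorem of Section~\ref{sec:more}, and the discrete rearrangement lemma. The one observation you make that the paper leaves entirely implicit — that for $m_i=1/l_i$ with $l_i\in\mathbb{Z}$ the extremal density $\frac1{l_i}\mathbbm{1}_U + c\,\mathbbm{1}_{\{x_0\}}$ is forced to have $|U|=l_i$ and $c=0$, so the ``nearly uniform'' extreme points of case~\ref{case2} collapse to genuine uniforms on $l_i$-point sets and Lemma~\ref{thm:discrearrange} applies directly — is precisely the step needed to make the paper's terse proof rigorous, and you state it correctly.
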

\begin{proof}
	Apply Theorem \ref{thm:MainLC} and then Lemma \ref{thm:discrearrange}.
\end{proof}




\section{Quantitative bounds based on Geometric inequalities}
\label{sec:Quantitative bounds}


\label{ss:epi}
\subsection{Euclidean Space}

In Brzezinski \cite{Brz13}, the author proves a geometric inequality about certain sections of $n$-products of $d$-dimensional balls.
Namely that for $\theta \in S^{n-1}$, there exists an explicit $c(d)>0$ (for which we will give further discussion) such that, 
\ben
\left|\left\{ x \in \prod_{i=1}^n B(0,r_d):\sum_{j=1}^n \theta_jx_j=0\right\} \right|_{d(n-1)} \leq \hspace{3mm} c(d).
\een


Letting $\{Z_i\}_{i=1}^n$ independent random vectors uniformly distributed on the ball of unit volume in $\mathbb{R}^d$, we can describe this result probabilistically as the fact that for $\theta \in S^{n-1}$
\ben
M(\theta_1 Z_1 + \cdots + \theta_n Z_n)\le  c(d).
\een

A projection matrix from $\mathbb{R}^n$ to a one dimensional subspace, can be realized by taking a representative unit vector $\theta$, and writing $P_\theta x = (\theta_1 x_1 + \cdots + \theta_n x_n) \theta$.  Thus $P_\theta \otimes I_d(Z) = \theta \otimes I_d (\theta_1 Z_1 + \cdots + \theta_n Z_n)$, interpreting $\theta$ here as a linear functional.  But $\theta \otimes I_d$ is a linear isometry from $\mathbb{R}^d$ onto its image.  Indeed, 
\begin{align*}
\|\theta \otimes Id (x) \|_{nd}^2
&=
\| (\theta_1 x, \dots, \theta_n x) \|^2
\\
&=
\sum_i \sum_j (\theta_i x_j)^2
\\
&=
\sum_i \theta_i^2 \|x\|^2_{d}
\\
&=
\|x\|_d^2.
\end{align*}
Thus,
\[
M(\theta_1 Z_1 + \cdots + \theta_n Z_n) = M(P_\theta \otimes I_d(Z)) \leq c(d).
\]
.

 Before we state our theorem let us remark on the constant $c(d)$.  In \cite{Bal86}, K. Ball proved $c(1)= \sqrt{2} $ (which in turn is  $c(1,1)$ in our Theorem~\ref{thm:Generalized infinity EPI}) as the best possible bound by deriving a sharp control of the $L^p$ norm of the Fourier transform of an interval indicator. In fact soon after in \cite{Bal89} Ball obtained Theorem~\ref{thm:Generalized infinity EPI} in the special case that each $X_i$ is distributed uniformly on $[-\frac 1 2, \frac 1 2]$.  In \cite{Brz13} Brzezinski used a similar approach to derive a best possible bound for $c(d)$ when $d \geq 2$, dependent on precise control of the $L^p$ norm of the Fourier transform of a the uniform distribution on the $d$-ball of unit volume, (which turn out to be normalized Bessel functions).  What is more, using estimates on  certain Bessel functions, Brzeziski showed $c(d)$ to be asymptotically sharp by showing the bound obtained for a certain sequence of vectors.
 
 This result can be reformulated probabilistically as 
\[
c(d) = \lim_n M \left( \frac{Z_1 + \cdots +Z_n}{\sqrt n}\right).
\]
Where $Z_i$ are iid and uniformly distributed on the unit volume ball. With this probabilistic formulation, there's an obvious and pleasant approach to obtaining the value of the right hand side.  Using log-concavity with symmetry to locate the maximum of the density and the local central limit theorem (see \cite{Dur10:book} for example),
\begin{align*}
c(d) = M(Z_G)
\end{align*}
where $Z_G$ is normal with covariance matrix $\Sigma$ matching $B_1$.  Hence, 
\begin{align} \label{eq:c(d)}
c(d) = (2 \pi)^{-d/2}|\Sigma|^{-1/2}.
\end{align}
The covariance matrix of a random variable uniformly distributed on a $d$-ball of radius $R$, is $\frac{R^2}{d+2} I_d$, and the radius of a $d$-ball of unit volume is $\Gamma^{\frac 1 d}(1 + \frac d 2) \pi^{-1/2}$.  Combining these we have
\begin{align*}
c(d) 
	 &= 
		c(d,1)
			\\
	&=
		\frac{(1+ \frac d 2)^{\frac{d}{2}}}{\Gamma(1+\frac{d}{2})}.
\end{align*}
Let us record this in the following
\begin{thm}\label{thm:brz}  {\cite{Brz13}}
	For $Z = (Z_1, \dots, Z_n)$, with coordinates independent and uniformly distributed on the $d$-ball of unit volume, and a one dimensional projection matrix $P$ on $\mathbb{R}^n$,
	\begin{align}
	M(P^{(d)}(Z)) \leq \frac{(1 + \frac d 2)^{\frac{d}{2}}}{\Gamma(1+\frac{d}{2})}.
	\end{align}
\end{thm}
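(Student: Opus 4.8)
The plan is to combine the isometry reduction carried out immediately before the statement with Brzezinski's slicing inequality, and then to evaluate the resulting constant by the Gaussian computation sketched above. Write the one-dimensional projection as $P = P_\theta$ for a representative unit vector $\theta \in S^{n-1}$. As already observed, $\theta \otimes I_d$ is a linear isometry of $\mathbb{R}^d$ onto its image and $P^{(d)}(Z) = (\theta \otimes I_d)(\theta_1 Z_1 + \cdots + \theta_n Z_n)$, so $M(P^{(d)}(Z)) = M(\theta_1 Z_1 + \cdots + \theta_n Z_n)$, and it suffices to bound the latter uniformly over unit vectors $\theta$. We may assume each $\theta_i \neq 0$ (coordinates with $\theta_i = 0$ simply drop out), and if $\theta$ is a standard basis vector then the quantity is $M(Z_j) = 1 \le c(d)$. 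In general $\theta_1 Z_1 + \cdots + \theta_n Z_n$ is a sum of independent symmetric log-concave random vectors, so its density $f_\theta$ is symmetric and log-concave, hence maximal at the origin; thus $M(\theta_1 Z_1 + \cdots + \theta_n Z_n) = f_\theta(0)$, and by Lemma \ref{lem:pushforwardformula} (using $\sum_i \theta_i^2 = 1$) this equals the $d(n-1)$-dimensional volume of the slice $\{x \in \prod_{i=1}^n B(0,r_d) : \sum_i \theta_i x_i = 0\}$ figuring in Brzezinski's theorem. That inequality gives $f_\theta(0) \le c(d)$.

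It remains to identify $c(d)$. As computed above, $c(d) = (2\pi)^{-d/2}|\Sigma|^{-1/2}$, where $\Sigma = \tfrac{R^2}{d+2} I_d$ is the covariance of the uniform law on the $d$-ball of unit volume and $R = \Gamma^{1/d}(1+\tfrac d2)\pi^{-1/2}$ is its radius. Then $|\Sigma|^{-1/2} = \bigl((d+2)/R^2\bigr)^{d/2}$ and $R^{-d} = \pi^{d/2}/\Gamma(1+\tfrac d2)$, so
\[
	c(d) = (2\pi)^{-d/2}(d+2)^{d/2}\,\frac{\pi^{d/2}}{\Gamma(1+\tfrac d2)} = \frac{\bigl(1+\tfrac d2\bigr)^{d/2}}{\Gamma(1+\tfrac d2)},
\]
which is the asserted bound.

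The one genuinely substantial ingredient is Brzezinski's slicing inequality, which we take as given; here is how it is obtained and where the difficulty lies. By Fourier inversion, $f_\theta(0) = (2\pi)^{-d}\int_{\mathbb{R}^d}\prod_{i=1}^n \widehat u(\theta_i\xi)\,d\xi \le (2\pi)^{-d}\int_{\mathbb{R}^d}\prod_{i=1}^n |\widehat u(\theta_i\xi)|\,d\xi$, where $u$ is the uniform density on the unit-volume $d$-ball and $\widehat u$ its real radial Fourier transform (a normalized Bessel function). One then applies Hölder with exponents $p_i = 1/\theta_i^2$ (so that $\sum_i 1/p_i = \sum_i \theta_i^2 = 1$), changes variables $\eta = \theta_i\xi$ in each factor, and invokes the sharp estimate on $\int_{\mathbb{R}^d}|\widehat u(\eta)|^{p}\,d\eta$ — Ball's sinc bound \cite{Bal86} for $d=1$ and Brzezinski's Bessel-function estimate \cite{Brz13} for $d\ge 2$ — which is precisely calibrated so that the resulting product collapses to the Gaussian value $(2\pi)^{-d/2}|\Sigma|^{-1/2}$. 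The main obstacle in that argument, handled separately in those references, is the regime where some $\theta_i^2$ is large, so that the corresponding $p_i < 2$; apart from invoking this input, the reasoning above is elementary.
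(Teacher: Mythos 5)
Your proposal is correct and follows essentially the same route as the paper: the result is recorded as a probabilistic restatement of Brzezinski's slicing inequality, the isometry identification $M(P_\theta^{(d)}(Z)) = M(\theta_1 Z_1 + \cdots + \theta_n Z_n)$ is the same reduction the paper carries out, and the identification of the constant $c(d)$ via the covariance $\Sigma = \tfrac{R^2}{d+2}I_d$ of the unit-volume ball matches the paper's computation. The extra scaffolding you supply (symmetry/log-concavity to place the maximum at the origin, Lemma~\ref{lem:pushforwardformula} to express $f_\theta(0)$ as the slice volume, and the sketch of the Fourier--H\"older--Bessel mechanism behind Brzezinski's bound) is consistent with the paper's discussion and does not constitute a different proof.
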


The proof of the above hinges on the following integral approximation, which will be useful in the generalization put forth below.

 \begin{lem}\cite{Brz13} \label{lem:brzest}
	If $\phi_d(\xi)$ denotes the characteristic function of a random variable uniformly distributed on the $d$-ball of unit volume and $p \geq 2$,
	\begin{align}
	(2 \pi)^{-d} \int_{\mathbb{R}^d} | \phi_d(\xi)|^p d \xi 
	\leq 
	c(d) p^{-\frac d 2}.
	\end{align}
	More generally for $Z$ uniformly distributed on a $d$ ball,
	\begin{align}
		(2 \pi)^{-d} \int_{\mathbb{R}^d} | \phi_Z(\xi)|^p d \xi 
			\leq 
				M(Z) c(d) p^{-\frac d 2}
	\end{align}
\end{lem}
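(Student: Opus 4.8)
The plan is to reduce the general $d$-ball statement to the unit-volume case by scaling, and then to treat the unit-volume case through the explicit Bessel representation of $\phi_d$, following Ball and Brzezinski.

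First I would dispose of the scaling. If $Z$ is uniform on a Euclidean $d$-ball, write $Z=\lambda Z_0$ where $Z_0$ is uniform on the $d$-ball of unit volume; then the support of $Z$ has volume $\lambda^d$, so $M(Z)=\lambda^{-d}$, and $\phi_Z(\xi)=\phi_d(\lambda\xi)$. The substitution $\eta=\lambda\xi$ gives
\[
(2\pi)^{-d}\int_{\mathbb R^d}|\phi_Z(\xi)|^p\,d\xi \;=\; \lambda^{-d}(2\pi)^{-d}\int_{\mathbb R^d}|\phi_d(\eta)|^p\,d\eta,
\]
so the second inequality follows from the first with exactly the factor $M(Z)=\lambda^{-d}$.

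For the first inequality I would use that $\phi_d$ is real-valued (the ball is origin-symmetric) and radial, with radial profile a normalization of the Bessel function $J_{d/2}$: with $R_d=\Gamma^{1/d}(1+\tfrac d2)\pi^{-1/2}$ the radius of the unit-volume ball, $\phi_d(\xi)$ is the constant multiple of $J_{d/2}(R_d|\xi|)/(R_d|\xi|)^{d/2}$ normalized so that $\phi_d(0)=1$. Passing to polar coordinates and substituting $t=R_d|\xi|$ converts $(2\pi)^{-d}\int_{\mathbb R^d}|\phi_d(\xi)|^p\,d\xi$ into a fixed constant (absorbing $(2\pi)^{-d}$, the surface area of $S^{d-1}$, and $R_d^{-d}$) times $\int_0^\infty t^{d-1}|j(t)|^p\,dt$, where $j$ is the normalized radial profile, $j(0)=1$ and $j(t)=1-\tfrac{t^2}{2(d+2)}+O(t^4)$ near the origin. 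The quadratic coefficient here is that of $\exp\!\big(-t^2/(2(d+2))\big)$, the radial profile of the characteristic function of $N(0,\Sigma)$ with $\Sigma=\frac{R_d^2}{d+2}I_d$ the covariance of the uniform law on the ball; the same bookkeeping applied to the Gaussian gives, by \eqref{eq:c(d)}, exactly $c(d)p^{-d/2}$. So the statement amounts to majorizing $\int_0^\infty t^{d-1}|j(t)|^p\,dt$ by its Gaussian counterpart. For $d=1$ this is Ball's $L^p$ inequality for $(\sin t)/t$; for $d\ge2$ it is Brzezinski's extension, which I would establish as he does, by choosing a majorant for $|j(t)|^2$ that agrees with the Gaussian to second order at $0$ and whose weighted $L^{p/2}$ norm is still controlled by the Gaussian value, exploiting monotonicity and log-concavity properties of $J_{d/2}$.

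I expect the last step to be the real obstacle: getting the sharp constant $c(d)$ rather than just the order $p^{-d/2}$. A crude split of the $t$-integral --- the small-$t$ region handled by $j(t)\le e^{-ct^2}$, the large-$t$ region by $|j(t)|\le1$ and the decay $|J_{d/2}(t)|\lesssim t^{-1/2}$ --- easily yields $O(p^{-d/2})$ with a worse constant, but pinning down $c(d)$ requires the delicate Bessel estimates of \cite{Brz13} (extending Ball's cube-slicing technique \cite{Bal86}), which I would simply invoke. One may also observe that for an even integer $p=n$ the bound follows directly from Theorem~\ref{thm:brz}: Fourier inversion together with the symmetry and log-concavity of $Z_1+\cdots+Z_n$ identifies $(2\pi)^{-d}\int_{\mathbb R^d}|\phi_d|^n\,d\eta$ with $n^{-d/2}M\big((Z_1+\cdots+Z_n)/\sqrt n\big)$, and the latter is at most $n^{-d/2}c(d)$ by that theorem; however, log-convexity of $p\mapsto\int_{\mathbb R^d}|\phi_d|^p\,d\eta$ only interpolates this to general $p\ge2$ at the cost of a factor $\ge1$ that tends to $1$ as the exponents grow, so it does not recover the sharp constant and the direct Bessel estimate cannot be avoided.
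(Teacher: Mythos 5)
The paper does not actually prove Lemma~\ref{lem:brzest}; it simply cites \cite{Brz13}, so there is no in-house argument for you to match against. That said, your sketch is consistent with how this result is obtained, and the parts you do work out are correct. The scaling reduction of the second inequality to the first is right: with $Z=\lambda Z_0$ you have $M(Z)=\lambda^{-d}$ and $\phi_Z(\xi)=\phi_d(\lambda\xi)$, and the change of variables $\eta=\lambda\xi$ introduces exactly the factor $\lambda^{-d}=M(Z)$, so the ``more generally'' clause is indeed a one-line corollary of the unit-volume case (this is presumably also how \cite{Brz13} obtains it). Your bookkeeping of the radial profile is also right: $j(t)=1-\tfrac{t^2}{2(d+2)}+O(t^4)$ matches the covariance $\Sigma=\frac{R_d^2}{d+2}I_d$, and the Gaussian computation $(2\pi)^{-d}\int\exp(-\tfrac{p}{2}\xi^T\Sigma\xi)\,d\xi=(2\pi)^{-d/2}|\Sigma|^{-1/2}p^{-d/2}=c(d)p^{-d/2}$ confirms that the target constant is precisely the Gaussian value from \eqref{eq:c(d)}. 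Your observation that for even integer $p=n$ the bound follows from Theorem~\ref{thm:brz} via Fourier inversion and the log-concavity/symmetry of $Z_1+\cdots+Z_n$ is a correct and pleasant consistency check, and you are also right that log-convexity of $p\mapsto\int|\phi_d|^p$ loses the sharp constant under interpolation. The one step you do not supply --- the pointwise Bessel majorization that yields the Gaussian constant for all real $p\ge2$ --- is genuinely the hard core of \cite{Brz13}, and you are candid that you are invoking it rather than reproving it; since the paper itself merely cites the lemma, that is the same level of rigor, and your reductions are a useful addition rather than a gap.
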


Before we proceed we will also need use of a geometric Brascamp-Lieb inequality, and an elementary lemma.
\begin{thm}\cite{BH09,BCCT08} \label{thm:GBLI}
	For $i=1,\cdots m$, $c_i >0$ and $A_i : H \to H_{i}$ be linear maps from a Euclidean space of dimension $N$ to a subspace of dimension $n_i$ such that $A_i A_i^t = I_{n_i}$ with 
	\begin{align*}
	\sum_{i=1}^m c_i A_i^t A_i = I_N,
	\end{align*}
	then for all Borel functions $g_i : H_i \to \mathbb{R}^+$,
	\begin{align*}
	\int_{H} \prod_{i=1}^m g_i(A_i x)^{c_i} dx
	\leq
	\prod_{i=1}^m \left( \int_{H_i} g_i \right)^{c_i}.
	\end{align*}
\end{thm}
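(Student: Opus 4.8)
The plan is to establish Theorem~\ref{thm:GBLI} by the heat-flow monotonicity method. Both sides of the claimed inequality are homogeneous of degree $c_i$ under the scaling $g_i\mapsto\lambda_i g_i$, so after rescaling I may assume each $g_i$ is a probability density on $H_i$, whence the right-hand side equals $1$ and the assertion becomes $\int_H\prod_{i=1}^m g_i(A_ix)^{c_i}\,dx\le 1$. If the left-hand side is infinite there is nothing to prove; and by a standard approximation (add a small Gaussian, mollify, truncate, and pass to the limit by monotone/dominated convergence) I may further assume each $g_i$ is smooth, everywhere positive, and rapidly decreasing --- say a positive Schwartz function --- so that all the manipulations below are legitimate.

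Next I would run the heat semigroup on each factor: let $u_i(t,\cdot)$ solve $\partial_t u_i=\Delta u_i$ on $H_i$ with $u_i(0,\cdot)=g_i$, and set
\[
	F(t)=\int_H\prod_{i=1}^m u_i(t,A_ix)^{c_i}\,dx ,
\]
so that $F(0)$ is the quantity to be bounded. The heart of the argument is the claim that $F'(t)\ge 0$. Writing $h_i=\log u_i$, the heat equation gives $\partial_t u_i/u_i=\Delta h_i+|\nabla h_i|^2$; the coisometry condition $A_iA_i^t=I_{n_i}$ yields the identity $(\Delta h_i)(A_ix)=\mathrm{div}_x\!\big(A_i^t(\nabla h_i)(A_ix)\big)$, so an integration by parts (no boundary terms, by the decay assumption) turns the Laplacian contributions into first-order ones. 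Abbreviating $\Psi(x)=\sum_{i}c_i h_i(A_ix)$, so that $\nabla_x\Psi=\sum_i c_iA_i^t w_i$ with $w_i:=(\nabla h_i)(A_ix)$, and collecting terms one arrives at
\[
	F'(t)=\int_H e^{\Psi(x)}\Big(\sum_{i=1}^m c_i|w_i|^2-|\nabla_x\Psi(x)|^2\Big)\,dx .
\]
The integrand is pointwise nonnegative: with $v:=\nabla_x\Psi=\sum_i c_iA_i^t w_i$ one has $|v|^2=\sum_i c_i\langle A_iv,w_i\rangle\le\big(\sum_i c_i|A_iv|^2\big)^{1/2}\big(\sum_i c_i|w_i|^2\big)^{1/2}$ by the weighted Cauchy--Schwarz inequality, while $\sum_i c_i|A_iv|^2=\langle(\sum_i c_iA_i^tA_i)v,v\rangle=|v|^2$ by the frame hypothesis; hence $|v|^2\le\sum_i c_i|w_i|^2$. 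Therefore $F$ is nondecreasing.

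It remains to identify $\lim_{t\to\infty}F(t)=1$. For a probability density $g_i$ the large-time profile of $u_i(t,\cdot)$ is that of the heat kernel $k_t^{(n_i)}(y)=(4\pi t)^{-n_i/2}e^{-|y|^2/4t}$; and for heat-kernel data the product collapses exactly, since $\prod_i k_t^{(n_i)}(A_ix)^{c_i}=(4\pi t)^{-N/2}e^{-|x|^2/4t}=k_t^{(N)}(x)$, where one uses $\sum_i c_in_i=N$ (trace the frame identity) together with $\sum_i c_i|A_ix|^2=|x|^2$. Thus the model value of $F$ is identically $1$, and one upgrades this to general $g_i$ via convergence of $u_i(t,\cdot)$ to the heat kernel in a suitable sense. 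Combining, $F(0)\le\lim_{t\to\infty}F(t)=1$, which is the assertion. I expect the main obstacle to be this $t\to\infty$ analysis --- making the comparison with the heat kernel quantitative enough to pass the limit under the integral sign, and, more broadly, controlling integrability and decay so that the differentiation of $F$ and the integration by parts are rigorously justified; this is exactly why the preliminary reduction to smooth, positive, rapidly decreasing $g_i$ is worth making first. An alternative, closer to Barthe's transport argument, would replace the heat flow by the Brenier maps $\nabla\psi_i$ pushing $g_i$ forward to a Gaussian and rest instead on the determinant inequality $\det\!\big(\sum_i c_iA_i^t B_iA_i\big)\ge\prod_i(\det B_i)^{c_i}$ for positive definite $B_i$; there the difficulty migrates to proving that determinant inequality and to the regularity of the transport maps.
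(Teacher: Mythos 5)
The paper states Theorem~\ref{thm:GBLI} as a citation to \cite{BH09,BCCT08} and does not reprove it, so there is no ``paper's proof'' to compare against; what you have written is a proof sketch of the cited result itself. Your heat-flow monotonicity argument is exactly the standard approach from the Bennett--Carbery--Christ--Tao and Carlen--Lieb--Loss line of work, and the core computations are sound: the identity $(\Delta h_i)(A_ix)=\mathrm{div}_x\big(A_i^t(\nabla h_i)(A_ix)\big)$ does follow from $A_iA_i^t=I_{n_i}$ via the chain rule, the integration by parts produces $-\int e^{\Psi}|\nabla\Psi|^2$, the weighted Cauchy--Schwarz step together with $\sum_ic_iA_i^tA_i=I_N$ gives the pointwise inequality $|\nabla\Psi|^2\le\sum_ic_i|w_i|^2$, and the two trace consequences $\sum_ic_in_i=N$, $\sum_ic_i|A_ix|^2=|x|^2$ are precisely what make the heat kernels collapse exactly in the $t\to\infty$ limit. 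You correctly flag the genuine technical burdens: justifying $F'(t)$ and the boundary-term-free integration by parts, and making the large-$t$ limit rigorous for general data (typically handled by a rescaling argument or by comparison with Gaussians rather than literal convergence). Your closing remark about Barthe's transport alternative, resting on the determinant inequality $\det\big(\sum_ic_iA_i^tB_iA_i\big)\ge\prod_i(\det B_i)^{c_i}$, accurately describes the other standard route. In short, the sketch is correct and is the recognized proof of the result the paper invokes; the remaining work is in the analytic housekeeping you already identify.
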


\begin{lem}\label{lem:LPPlem}
	Let $\theta=(\theta_1,\cdots, \theta_n)\in S^{n-1}$, denote by $P_i:\mathbb{R}^n\rightarrow\mathbb{R}^n$ the orthogonal projection onto $e_i^{\perp}\subset\mathbb{R}^n$. Then for $d\ge 1$, consider $(\mathbb{R}^d)^n$, let $A$ be a measurable subset of $(\langle \theta\rangle^\perp)^d$ with $\mbox{span}(A)=M^d$ for some subspace $M\subset\mathbb{R}^n$ with $\mbox{dim}(M)=m\in \{1,\cdots, n-1\}$. Then we have
	\ben
	|\theta_i|^d|A|_{md}\le |P_i^{(d)}(A)|_{md}
	\een
\end{lem}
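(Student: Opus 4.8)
The plan is to reduce the inequality to a Jacobian computation: $A$ lies in the $md$-dimensional subspace $M^{(d)}:=M\otimes\mathbb{R}^d$ of $(\mathbb{R}^d)^n$, the restriction $P_i^{(d)}|_{M^{(d)}}$ is linear, so $|P_i^{(d)}(A)|_{md}=J\cdot|A|_{md}$ where $J$ is the volume-scaling factor of that restricted map, and it then suffices to show $J\ge|\theta_i|^d$.

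First I would extract the role of the hypotheses. Since $(\langle\theta\rangle^\perp)^d$ is a linear subspace and $\mathrm{span}(A)=M^d$, we get $M^{(d)}\subseteq(\langle\theta\rangle^\perp)^{(d)}$; reading off a single coordinate slice across the $n$ blocks shows $M\subseteq\langle\theta\rangle^\perp$, i.e. the unit vector $\theta$ lies in $M^\perp$. If $e_i\in M$, then $\theta_i=\langle\theta,e_i\rangle=0$ and the claim is the trivial bound $0\le|P_i^{(d)}(A)|_{md}$; so I may assume $e_i\notin M$, in which case $P_i|_M$, and hence $P_i^{(d)}|_{M^{(d)}}$, is injective and $J>0$.

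Next I would compute $J$. Pick an orthonormal basis $v_1,\dots,v_m$ of $M$ and set $a_j=\langle v_j,e_i\rangle$. The Gram matrix of $\{P_iv_j\}$ is $G=I_m-aa^{\transpose}$, with $\det G=1-\sum_j a_j^2=1-\|\Pi_M e_i\|^2=\|\Pi_{M^\perp}e_i\|^2=\mathrm{dist}(e_i,M)^2$, where $\Pi_M,\Pi_{M^\perp}$ denote orthogonal projections; hence the $m$-volume scaling of $P_i|_M$ is $\mathrm{dist}(e_i,M)$. To pass to the $d$-folding I would check that $\{v_j\otimes\epsilon_\ell\}_{j\le m,\ell\le d}$ (with $\epsilon_1,\dots,\epsilon_d$ the standard basis of $\mathbb{R}^d$) is an orthonormal basis of $M^{(d)}$ inside $(\mathbb{R}^d)^n$, and that $P_i^{(d)}(v_j\otimes\epsilon_\ell)=(P_iv_j)\otimes\epsilon_\ell$, so the Gram matrix of the images is the Kronecker product $G\otimes I_d$, with $\det(G\otimes I_d)=(\det G)^d$. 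Therefore $J=\mathrm{dist}(e_i,M)^d$, i.e. $|P_i^{(d)}(A)|_{md}=\mathrm{dist}(e_i,M)^d\,|A|_{md}$.

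Finally I would bound the distance from below using $\theta$: since $\theta\in M^\perp$ and $\|\theta\|=1$,
\[
\mathrm{dist}(e_i,M)=\|\Pi_{M^\perp}e_i\|\ge|\langle\Pi_{M^\perp}e_i,\theta\rangle|=|\langle e_i,\theta\rangle|=|\theta_i|,
\]
and combining this with the previous display gives $|\theta_i|^d|A|_{md}\le|P_i^{(d)}(A)|_{md}$, as claimed. The only mildly delicate step is the tensor bookkeeping for the $d$-folding — verifying orthonormality of $\{v_j\otimes\epsilon_\ell\}$ and the factorization of the Gram matrix as $G\otimes I_d$ — but this is routine; all the remaining ingredients are elementary linear algebra.
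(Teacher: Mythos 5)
Your proof is correct and follows essentially the same route as the paper (a Jacobian/Gram-determinant computation for $P_i$ restricted to $M$, combined with the Kronecker-product factorization $G\otimes I_d$ of the Gram matrix for the $d$-folded map). In fact your version is slightly more careful than the paper's sketch: the paper claims one can choose an orthonormal basis of $M$ with $|P_i(v_1)|=|\theta_i|$ exactly and $P_i(v_j)=v_j$ for $j\ge 2$, but this equality forces $\mathrm{dist}(e_i,M)=|\theta_i|$, which only holds when $m=n-1$ (so that $M^\perp=\langle\theta\rangle$); for $m<n-1$ one only has $|P_i(v_1)|=\mathrm{dist}(e_i,M)\ge|\theta_i|$, which is precisely what your Cauchy--Schwarz step establishes. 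Since the lemma is applied in the inductive proof of Theorem~\ref{thm:genlpp} with $m=n-k<n-1$, your explicit inequality is the right statement, and your treatment of the degenerate case $e_i\in M$ (where $\theta_i=0$) is a sensible completion that the paper's sketch omits.
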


We will only sketch a proof, the mechanics of the  argument are the same as Lemma 3.1 of \cite{LPP15} where more details can be found.

\begin{proof} We may assume that $P_i:\theta^\perp\rightarrow e_i^{\perp}$ is injective and $\theta \neq \pm e_i$. Standard linear algebraic techniques can obtain an orthonormal basis for $M$ $\{v_1,\cdots, v_m\}$ with $|P_i(v_1)|=|\theta_i|$ and $P_i(v_j)=v_j$ for all $j=2,\cdots, k$. Now we will raise from $\mathbb{R}^n$ to $(\mathbb{R}^d)^n$. Define $v_{j,l}:=v_j\otimes\delta_d^l$ where $\delta_d^{l}$ denotes the $d$-dimensional vector with the $l$-th position equals to 1 and other positions equal to 0. It is easy to verify that $|P_i^{(d)}(v_{1,l})|=|\theta_i|$ for all $l=1,\cdots, d$, and $P_i^{(d)}(v_{j,l})=v_{j,l}$ for all $j=2,\cdots, k$ and $l=1, \cdots, d$. hus a similar argument as Lemma 3.1 in \cite{LPP15} provides this lemma. 
\end{proof}


Now let us state the main result of this section.
\begin{thm}\label{thm:genlpp}
	Given $Z = (Z_1, \cdots, Z_n)$ comprised of independent, random vectors such that each $Z_i$ is distributed uniformly on a ball in $\mathbb{R}^d$ with $d\geq 2$, and a projection matrix $P$ from $\mathbb{R}^n$ to a $k$ dimensional subspace, then there exists $\{\gamma_i\}_{i=1}^n$, with $\gamma_i \in [0,1]$ such that $\sum_{i=1}^n \gamma_i =k$ such that
	\be\label{inq:GeneralizedLPP}
	M(P^{(d)} (Z)) \le c(d,k)\prod_{i=1}^n M(Z_i)^{\gamma_i},
	\ee
	where $c(d,k)$ is the minimum of two separately derived constants,
	\be
		  c_1(d,k) = \left(\frac{(1 + \frac d 2 )^{\frac{d}{2}}}{\Gamma(1 + \frac d 2)} \right)^k 
			  \hspace{5mm} 
		c_2(d,k) = \left(\frac{n} {n-k} \right)^{\frac{d(n-k)}{2}}.
	\ee
\end{thm}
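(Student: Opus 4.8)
The plan is to establish the two constants $c_1(d,k)$ and $c_2(d,k)$ by separate arguments, both of which reduce, via a geometric Brascamp--Lieb inequality (Theorem \ref{thm:GBLI}), the $k$-dimensional projection problem to the $1$-dimensional slicing estimates already recorded in Theorem \ref{thm:brz} and Lemma \ref{lem:brzest}. Throughout, normalize so that each $Z_i$ is uniform on a ball; by homogeneity of both sides under individual scalings $Z_i \mapsto \lambda_i Z_i$ (which multiplies $M(Z_i)$ by $\lambda_i^{-d}$ and $M(P^{(d)}Z)$ by $\prod \lambda_i^{-d\gamma_i}$ once $\sum\gamma_i = k$), it suffices to treat the case $M(Z_i) = 1$ for all $i$, i.e. each $Z_i$ uniform on the $d$-ball of unit volume, and then prove $M(P^{(d)}Z) \le c(d,k)$. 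Since $P^{(d)}Z$ is a linear image of a symmetric log-concave measure it is itself symmetric and log-concave, so $M(P^{(d)}Z) = f_{P^{(d)}Z}(0)$, and by Lemma \ref{lem:pushforwardformula} (with $T = P$, so $PP^t = I_k$ and $C(P,A)$ depends only on the chosen basis $A$ of $\ker P$) this maximal density is a constant times an integral of $\prod_i \mathbbm 1_{B_i}$ over $\ker P^{(d)}$; equivalently it is governed by $\int_{(\mathbb R^d)^{n-k}} \prod_{j=1}^n f_j\big(\sum_i v_{i,j} y_i\big)\,dy$ for an orthonormal basis $\{v_i\}$ of $\ker P$.

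For $c_1(d,k)$, the idea is to iterate the codimension-one bound. Write the rank-$k$ projection $P$ as a composition $P_{k} \circ \cdots \circ P_1$ of rank-one orthogonal projections onto successively smaller subspaces — more precisely, pick an orthonormal basis $\theta^{(1)},\dots,\theta^{(k)}$ of the range of $P$ and successively slice. At each slicing step one applies Theorem \ref{thm:brz}/Lemma \ref{lem:brzest}: the conditional density of the current vector along the new direction $\theta^{(r)}$ is bounded by $c(d)\cdot\prod (\text{current maxima})^{1/2}$-type factors, with the geometric Brascamp--Lieb inequality of Theorem \ref{thm:GBLI} used to control the product of the resulting marginal integrals by the product of the full integrals raised to appropriate exponents $c_i$. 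The bookkeeping of these exponents is exactly the content of Lemma \ref{lem:LPPlem}: it guarantees that the weights $\gamma_i$ produced after $k$ iterations lie in $[0,1]$ and sum to $k$, and it converts each projection $P_i$ onto $e_i^\perp$ into the coefficient $|\theta_i|^d$, which is what feeds the Brascamp--Lieb normalization $\sum c_i A_i^t A_i = I$. Carrying this out $k$ times multiplies $k$ copies of $c(d) = c(d,1) = (1+\tfrac d2)^{d/2}/\Gamma(1+\tfrac d2)$, giving $c_1(d,k) = c(d)^k$.

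For $c_2(d,k)$, the approach is the complementary "few summands left" estimate: use that $\ker P$ has dimension $n-k$, choose the basis $A = (v_1,\dots,v_{n-k})$ of $\ker P$, and bound $f_{P^{(d)}Z}(0) = C(P,A)\int_{(\mathbb R^d)^{n-k}} \prod_{j=1}^n \mathbbm 1_{B_j}\big(\textstyle\sum_i v_{i,j} y_i\big)\,dy$ directly by applying the geometric Brascamp--Lieb inequality to the $n$ maps $y \mapsto \sum_i v_{i,j}y_i$ on $(\mathbb R^d)^{n-k}$ — whose Gram structure has $\sum_j v_{i,j}v_{i',j} = \langle v_i,v_{i'}\rangle = \delta_{ii'}$, so after the correct rescaling the Brascamp--Lieb datum holds with coefficients summing in a way controlled by $n/(n-k)$ — yielding a bound proportional to $\prod_j |B_j|^{c_j}$ with the combinatorial factor $(n/(n-k))^{d(n-k)/2}$ coming out of the normalizing determinant $C(P,A)$ together with the Brascamp--Lieb constant. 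Taking $c(d,k) = \min\{c_1(d,k), c_2(d,k)\}$ and re-homogenizing (restoring the $M(Z_i)$ factors) gives \eqref{inq:GeneralizedLPP}.

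The main obstacle I expect is the exponent bookkeeping in the iterated argument for $c_1$: one must verify that at each slicing stage the geometric Brascamp--Lieb hypothesis $A_iA_i^t = I$, $\sum c_i A_i^t A_i = I$ can actually be arranged with the weights that arise, and that after $k$ steps the accumulated weights $\gamma_i$ genuinely satisfy $\gamma_i \in [0,1]$ and $\sum\gamma_i = k$ — this is where Lemma \ref{lem:LPPlem} (lifted from \cite{LPP15}) does the real work, and getting the "raising from $\mathbb R^n$ to $(\mathbb R^d)^n$" step to interact cleanly with the $d\ge 2$ Bessel-function estimate of Lemma \ref{lem:brzest} is the delicate point. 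The $c_2$ bound is more routine: it is essentially a single application of Theorem \ref{thm:GBLI} plus the determinant identity from Lemma \ref{lem:pushforwardformula}.
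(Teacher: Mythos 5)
There is a genuine gap in your sketch of the $c_1(d,k)$ bound, and the role you assign to Lemma~\ref{lem:LPPlem} is the opposite of what it actually does in the paper.

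\textbf{The $c_1$ constant.} Your plan is to ``iterate the codimension-one bound'' by choosing an orthonormal basis $\theta^{(1)},\dots,\theta^{(k)}$ of the range of $P$ and slicing $k$ times. This does not work as stated: after slicing by $\theta^{(1)}$, the conditional (or sliced) measure is no longer a product of independent uniform distributions on balls, so Theorem~\ref{thm:brz}/Lemma~\ref{lem:brzest} cannot be re-applied at the second step. The paper instead makes a \emph{single} Brascamp--Lieb application, in Fourier space: it decomposes $P=\sum_j a_j^2 u_ju_j^T$ with $a_j=|Pe_j|$, $u_j = Pe_j/|Pe_j|$, bounds $M(P^{(d)}Z)$ by $(2\pi)^{-kd}\int_{E^{(d)}}|\phi_{P^{(d)}Z}|$, factors the characteristic function by independence, and applies Theorem~\ref{thm:GBLI} with maps $A_j=(u_j^{(d)})^T$, exponents $c_j=a_j^2$, and $g_j=|\phi_{Z_j}(a_j\,\cdot)|$. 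Each resulting factor is then controlled by Lemma~\ref{lem:brzest} with $p=1/a_j^2$. But Lemma~\ref{lem:brzest} requires $p\ge 2$, i.e.\ $a_j^2\le 1/2$, which is not automatic. The essential structural ingredient you miss is the dichotomy handling this: if some $|Pe_j|^2>1/2$ (a ``dominant coordinate'' of a vector orthogonal to $\ker P$), the paper uses Lemma~\ref{lem:LPPlem} to project away the $j$-th coordinate of $\mathbb R^n$ entirely, paying a factor $2^{d/2}$, reducing $(n,k)$ to $(n-1,k-1)$, and closing the induction via Stirling-type monotonicity $2^{d/2}c_1(d,k-1)\le c_1(d,k)$. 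So Lemma~\ref{lem:LPPlem} is not ``exponent bookkeeping'' ensuring $\gamma_i\in[0,1]$ and $\sum\gamma_i=k$; those facts come for free from $\gamma_j=a_j^2$ being the coefficients of the rank-one decomposition of a rank-$k$ orthogonal projection. Lemma~\ref{lem:LPPlem} is specifically the escape hatch for the case where the Fourier argument cannot be run directly.

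\textbf{The $c_2$ constant.} Your plan is in the right spirit — apply Brascamp--Lieb once to the integral over $\ker P^{(d)}$ — and is close to the paper's argument, which writes $\tilde P$ for the projection onto $\ker P$, sets $c_i=|\tilde Pe_i|$, and applies Theorem~\ref{thm:GBLI} to $g_i=\mathbbm 1_{B_i/c_i}$ with exponents $c_i^2$. However, the factor $(n/(n-k))^{d(n-k)/2}$ does not come out of ``the normalizing determinant $C(P,A)$ together with the Brascamp--Lieb constant.'' It arises from the post-Brascamp--Lieb expression $\prod_i M(Z_i)^{1-c_i^2}\,c_i^{-dc_i^2}$ together with the constraint $\sum_i c_i^2=n-k$: the product $\prod_i c_i^{-dc_i^2}$ is maximized (equivalently, $\prod_i (c_i^2)^{c_i^2}$ is minimized) when all $c_i^2$ are equal to $(n-k)/n$, which gives exactly the combinatorial factor. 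This optimization step is where the exponents $\gamma_i=1-c_i^2$ are produced and where $\sum\gamma_i=k$ is verified; it needs to be stated explicitly rather than folded into a vague ``correct rescaling.''

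In short: your $c_2$ outline is close and could be completed; your $c_1$ outline rests on an iteration that cannot be carried out, replaces the actual Fourier/Brascamp--Lieb step with a vague slicing scheme, omits the essential ``dominant coordinate'' case split that makes Lemma~\ref{lem:brzest} applicable, and mislocates the role of Lemma~\ref{lem:LPPlem}.
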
 

The assumption that $d\ge 2$, is at no loss of generality, the case $d=1$ is solved by \cite{Bal89, LPP15} with 
\[
	c(1,k) = \min \left\{2^{\frac k 2}, \left(\frac{n}{n-k} \right)^{\frac{n-k}{2}}\right\}
	\]

 Let us notice that since $Z = M(Z) \mathbbm{1}_B$, with $B = (B_1, \dots, B_n)$, this statement has an equivalent, more obviously geometric formulation, that for any set of $B_i$ $d$-dimensional balls, with their product $B = \prod_{i=1}^n B_i$, there exist $\beta_i \in [0,1]$ such that $\beta_1 +\cdots + \beta_n = n-k$, such that
\begin{align}
|\{E^{(d)}_k \cap B\}|_{d(n-k)} \leq c_1(d,k) \prod_{i=1}^n |B_i|_d^{\beta_i},
\end{align}	
where $E_k$ is a $k$ co-dimensional subspace, and
	\begin{align}
	\label{def:spacefolding}
		E^{(d)} = E \otimes \mathbb{R}^d \subseteq (\mathbb{R}^d)^n  = \mathbb{R}^n \otimes \mathbb{R}^d
	\end{align}  with the orthogonal projection matrix given by $ P^{(d)}$ mapping from $(\mathbb{R}^d)^n\rightarrow E^d$.

\begin{proof}
	We will prove the result in two parts, as it is enough to find $\gamma_i$ for $c_1$ and then $c_2$.
	
\noindent{\bf Proof of the constant $\left(\frac{(\frac d 2 +1)^{\frac{d}{2}}}{\Gamma\left(\frac{d}{2} +1 \right)}\right)^{k}$.} 

Assuming the result inductively for the case of $n-1$,   we may assume that $P e_i$ is non-zero for all $i$.  In this case we have the following decomposition,
\[
P = PP^T = P I_n P = P \left( \sum_i e_i e^T_i \right) P^T = \sum_i Pe_i(Pe_i)^T.
\]
Writing $a_i = |Pe_i|$ and $u_i = Pe_i/|Pe_i|$ we have,
\be\label{eq:decomP}
P = \sum_{i=1}^n a_i^2 u_i u_i^T.
\ee
From this it follows that $P^{(d)} = \sum_{i=1}^n a_i^2 (u_iu_i^T)^{(d)}= \sum_{i=1}^n a_i^2 \left(u_i^{(d)}(u_i^{(d)})^T\right)$.

Again from the inductive hypothesis, we may assume that all $a_i>0$. We argue in two separate cases, the first will be the case in which $E_k$ lies orthogonal to a unit vector $\theta$ with some coordinate $|\theta_i|>1/\sqrt{2}$ where we will use the geometric formulation above, and the second case in which it does not where we will employ the original formulation.

We proceed in the case that $E_k \subseteq \theta^\perp$ with $\theta$ possessing a ``dominant coordinate", and show that the problem can be projected down so that induction may be applied.  Indeed, take $\theta$ to be perpendicular to $E_k$, with a coordinate $\theta_1$ (without loss of generality) greater than $2^{-1/2}$, then 
if we write $T_i$ as the orthogonal projection to $e_i^\perp$, then by Lemma \ref{lem:LPPlem} for any $\theta_i$ the $i$-th coordinate in a unit vector $\theta$ such that $E_k \subseteq\theta^\perp$
	 \begin{align*}
		 |\{E^{(d)}_{k} \cap B\}|_{dk} \leq |\theta_i|^{-d} |T_i\otimes I_d(E^{(d)}_k \cap B)|_{d(n-k)}
	 \end{align*}
	 
	 \begin{align*}
		 |\{E^{(d)}_k \cap B\}|_{d(n-k)}
			 &\leq
				 \left|E^{(d)}_k \cap \left( \mathbb{R}^d \times \prod_{i=2}^n B_i \right)\right|_{d(n-k)}
				 	\\
			 &\leq
				2^{\frac d 2} \left|T_1 \otimes I_d \left(E^{(d)}_k \cap \left( \mathbb{R}^d \times \prod_{i=2}^n B_i \right)\right)\right|_{d(n-k)}
					\\
			&\leq 
				2^{\frac d 2} \left| (T_1(E_k) \otimes \mathbb{R}^d ) \cap \prod_{i=2}^n B_i \right|_{d(n-k)}.
	 \end{align*} 
	But now we can apply the induction hypothesis, as $T_1(E_k)$
		\begin{align*}
		\left|T_1(E_k) \otimes \mathbb{R}^d  \cap \prod_{i=2}^n B_i \right|_{d(n-k)} 
			&\leq 
				 c(d,k-1) \prod_{i=2}^n |B_i|^{\beta_i},
	\end{align*}
	where $\sum_i \beta_i = n-k$.  Stirling's formula implies that $2^{\frac d 2} c(d,k-1) \leq c_1(d,k-1)$, and $|B_1|^{\beta_1} =1 $ if $\beta_1 =0$. Combining these results we see that 
	\begin{align*}
		|\{E_k^{(d)} \cap B\}|
			&\leq 
				2^{\frac d 2} c_2(d,k-1) 1 \prod_{i=2}^n |B_i|^{\beta_i}
				\\
			&\leq
				 c_2(d,k)  \prod_{i=1}^n |B_i|^{\beta_i}.
	\end{align*}
	Thus we can restrict ourselves to the case that vectors orthogonal to $E_k$ do not have a ``dominant coordinate".  In this case we proceed via a Fourier theoretic argument.
 Recall the definition of $a_j$ and $u_j$ and by \eqref{eq:decomP}, since $a_i$ is the $j^{th}$ coordinate of $u_j$, so we have all $a_j^2 \in (0, 1/2]$. The standard Fourier inversion formula gives us
	\begin{align*}
		M(P^{(d)}(Z)) 
			\leq
				\frac{1}{(2 \pi)^{kd}} \int_{\mathbb{E}_k^{(d)} } |\phi_{P^{(d)}(Z)}(w)| dw.
	\end{align*}
	As is to be expected, the independence of the $Z_i$ can be exploited.  Considering  $e_i\in\mathbb{R}^n$ as a linear map $\lambda \mapsto \lambda e_i$ matrix, we can express 
	\ben
	Z=\sum_{i=1}^n e_i^{(d)} Z_i.
	\een
	If we further treat $w$ as an $nd\times 1$ vector,
	\begin{align*}
		\langle w , P^{(d)} Z \rangle 
			&=
				\sum_{j=1}^n \langle w, P^{(d)} e_j^{(d)} Z_j \rangle
				\\
			&=
				\sum_{j=1}^n \langle w, a_i u_i^{(d)} Z_j \rangle
				\\
			&=
				\sum_{j=1}^n \langle a_j (u_j^{(d)})^t w, Z_j \rangle. 
	\end{align*} then by independence of $Z_i$,
	\begin{align*}
	\phi_{P^{(d)}(Z)}(w) 
		&=
			\mathbb{E}\exp i \left(\sum_{j=1}^n \langle a_j (u_j^{(d)})^t w, Z_j \rangle \right)
				\\
		&=
			\prod_{i=1}^n \mathbb{E} \exp i  \langle a_j (u_j^{(d)})^t w, Z_j \rangle
				\\
		&=
			\prod_{i=1}^n \phi_{Z_j}(a_j (u_j^t)^{(d)} w)
	\end{align*}
	
	Then applying Theorem \ref{thm:GBLI}, to $A_j = (u_j^{(d)})^t$, $g_j(x) = |\phi_{Z_j}(a_jx)|$, $c_j = a_j^2$, so that by the above we have
	\begin{align*}
	M(P^{(d)}(Z))
		&\leq
			(2\pi)^{-kd} \int_{E^{(d)}} \prod_{j=1}^n g_j (A_j w) dw
				\\
		&\leq 
			(2\pi)^{-kd}  \prod_{j=1}^n \left( \int g_j^{ {a^{-2}_j}} (x) dx \right)^{a^2_j}
	\end{align*}
The application of the geometric Brascamp-Lieb is legitimate by the decomposition of the projection to $E_k^{(d)}$, $P^{(d)} = \sum_j a_j^2 u_j^{(d)} (u_j^{(d)})^T$.  Note, $P$ is the identity on $E_k$ so $P^{(d)}$ acts as the identity on $E_k^{(d)}$ by definition and that $u_j$ is a unit vector in $E_k$ implies $(u_j^{(d)})^T u_j^{(d)} = I_d$.
	
	 We can now apply Lemma \ref{lem:brzest}, reminding the reader that $\sum_j a_j^2 = k$.
	\begin{align*}
	(2\pi)^{-kd}  \prod_{j=1}^n \left( \int g_j^{ {a^{-2}_j}} (x) dx \right)^{a^2_j}
	&=
	{(2\pi)^{-kd}} \prod_{j=1}^n \left( \int_{\mathbb{R}^d}|\phi_{Z_j} (a_j z)|^{\frac{1}{a_j^2}}dz\right)^{a_j^2}
	\\
	&=
	\prod_{j=1}^n \left( (2\pi a_j)^{-d} \int_{\mathbb{R}^d}|\phi_{Z_j} ( z)|^{\frac{1}{a_j^2}}dz\right)^{a_j^2}	
	\\
	&\leq
	\prod_{j=1}^n \left( M(Z_j) c(d) a_j^d a_j^{-d} \right)^{a_j^2}	
	\\
	&=
	c^k(d) \prod_{j=1}^n M^{a_j^2}(Z_j)   	
	\end{align*}
	
	Taking $\beta_j:=a_j^2$ completes the first half the proof.

	\noindent{\bf Proof of the constant $\left(\frac{n}{n-k}\right)^{\frac{d(n-k)}{2}}$.}
	 
	Recall that by symmetry, logconcavity, and the representation given in Lemma \ref{lem:pushforwardformula}
	\begin{align*}
	M(P^{(d)}(Z))
	&= |B|^{-1} \int_{(E^d)^\perp}\prod_{i=1}^n  \mathbbm{1}_{B_i}(y)dy
	\end{align*}
	We will again pursue a Brascamp-Lieb argument.  Denote $\tilde{P}$ the orthogonal projection from $\mathbb{R}^n$ onto $E^\perp$, so that $\tilde{P}^{(d)}$ is the orthogonal projection from $(\mathbb{R}^d)^n$ onto $E^d$. Let $c_i:=|\tilde{P}e_i|$, let $\tilde{u}_i:=\tilde{P}e_i/|\tilde{P}e_i|$, then for $y \in (E^d)^\perp$ it follows from $P^{(d)}y = y$, and $P$ symmetric that, $\mathbbm{1}_{B_i}(y) = \mathbbm{1}_{\frac 1 {c_i} B_i}((u_i^{(d)})^T y)$ $\tilde{P}$.  Applying Theorem \ref{thm:GBLI} to $A_i = (u_i^{(d)})^T$ and $g_i = \mathbbm{1}_{B_i/c_i}$,
	We have

	\begin{align*}
	M( P^{(d)}(Z))
		&= 
			\prod_{i=1}^n M(Z_i) \int_{(E^d)^\perp}\prod_{i=1}^n  \mathbbm{1}^{c_i^2}_{(c_i^{-1} B_i)}((u_i^{(d)})^T y)dy
				 \\
		&\leq 
			\prod_{i=1}^n M(Z_i) \left( \int \mathbbm{1}_{(c_i^{-1} B_i)} \right)^{c_i^2}
				\\
		&=
			\prod_{i=1}^n M(Z_i)^{1-c_i^2} c_i^{-d c_i^2}
	\end{align*}
	Note that $\sum_{i=1}^nc_i^2=n-k$, and that the product $\prod_{j=1}^n\left(c_j^2\right)^{c_j^2}$ is minimized when $c_i^2$'s are equal. Thus we have our conclusion
	\begin{align*}
	M(P^{(d)}(Z))
	\le \left(\frac{n}{n-k}\right)^{\frac{d(n-k)}{2}}\prod_{i=1}^n M(Z_i)^{1-c_i^2}.
	\end{align*}
	And note that if we denote $\gamma_i:=1-c_i^2$ then $\sum_{i=1}^n\gamma_i=k$, this provides the second constant. 
\end{proof}

Combining, Theorems \ref{thm:GRGZ} and \ref{thm:genlpp} we have Theorem \ref{thm:Generalized infinity EPI}.  Let us also state the theorem's information theorec formulation.

\begin{thm}\label{thm:genlppppppp}
	Letting $m$ denote the dimension of the kernel of a projection matrix $P$, one can attribute to $P$ a collection of $t_i \in [0, \frac 1 k]$ such that $\sum_{i=1}^n t_i =1$, and   
	\be\label{inq:GeneralizedLPP1}
N_\infty (P^{(d)}(X)) \ge \max \left( \left(\frac m n \right)^{\frac m k }, \frac{\Gamma^{\frac 2 d}( 1 + \frac d 2)}{1+ \frac d 2} \right) \prod_{i=1}^n N_\infty^{t_i}(X_i)
	\ee
	For any random variable $X = (X_1, \dots, X_n)$ composed of $X_i$ independent $\mathbb{R}^d$ vectors.
\end{thm}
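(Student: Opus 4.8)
The plan is to obtain Theorem~\ref{thm:genlppppppp} as a direct reformulation of Theorem~\ref{thm:Generalized infinity EPI}, once one uses the convention that the $\infty$-R\'enyi entropy power of a random vector $Y$ valued in $\mathbb{R}^{D}$ is normalized by the ambient dimension, i.e.\ $N_\infty(Y)=M(Y)^{-2/D}$. Since the projection $P$ has rank $k=n-m$, the vector $P^{(d)}(X)$ is valued in $(\mathbb{R}^d)^{k}\cong\mathbb{R}^{dk}$, so $N_\infty(P^{(d)}(X))=M(P^{(d)}(X))^{-2/(dk)}$, whereas each $X_i\in\mathbb{R}^d$ gives $N_\infty(X_i)=M(X_i)^{-2/d}$; the only point needing care is that these two exponents differ by the factor $k$.

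First I would invoke Theorem~\ref{thm:Generalized infinity EPI}, which is available since it is stated above: there exist $\gamma_i\in[0,1]$, depending only on $P$, with $\sum_{i=1}^n\gamma_i=k$, such that $M(P^{(d)}(X))\le c(d,k)\prod_{i=1}^n M^{\gamma_i}(X_i)$. Raising both sides to the negative power $-2/(dk)$ reverses the inequality, and regrouping the exponents on the right via $-2\gamma_i/(dk)=(-2/d)(\gamma_i/k)$ gives
\[
N_\infty(P^{(d)}(X))\;\ge\;c(d,k)^{-2/(dk)}\prod_{i=1}^n N_\infty(X_i)^{\gamma_i/k}.
\]
Setting $t_i:=\gamma_i/k$ one has $t_i\in[0,\tfrac1k]$, $\sum_{i=1}^n t_i=1$, and the $t_i$ depend only on $P$, exactly as asserted.

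It remains only to identify the constant. For $d\ge 2$ we have $c(d,k)=\min\{c_1(d,k),c_2(d,k)\}$, and since $x\mapsto x^{-2/(dk)}$ is decreasing, $c(d,k)^{-2/(dk)}=\max\{c_1(d,k)^{-2/(dk)},c_2(d,k)^{-2/(dk)}\}$. A one-line exponent computation gives
\[
c_1(d,k)^{-2/(dk)}=\left(\frac{(1+d/2)^{d/2}}{\Gamma(1+d/2)}\right)^{-2/d}=\frac{\Gamma^{2/d}(1+\frac{d}{2})}{1+\frac{d}{2}},\qquad c_2(d,k)^{-2/(dk)}=\left(\frac{n}{n-k}\right)^{-(n-k)/k}=\left(\frac{m}{n}\right)^{m/k},
\]
and substituting into the displayed bound yields \eqref{inq:GeneralizedLPP1}; the case $d=1$ is handled identically, using the $d=1$ branch of Theorem~\ref{thm:Generalized infinity EPI} and its constant $c(1,k)$ in place of $c(d,k)$. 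There is no genuine obstacle here: once the dimension-normalization of $N_\infty$ is applied consistently, the statement is a purely formal consequence of Theorem~\ref{thm:Generalized infinity EPI}, the only care needed being the distinction between the exponents $2/d$ (for the individual $\mathbb{R}^d$-valued summands) and $2/(dk)$ (for the $\mathbb{R}^{dk}$-valued image $P^{(d)}(X)$).
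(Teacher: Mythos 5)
Your proof is correct and is essentially the same one-line argument the paper gives: raise the inequality of Theorem~\ref{thm:Generalized infinity EPI} to the power $-2/(dk)$ and unwind the definitions of $N_\infty$. Your extra observation that the exponents $2/d$ and $2/(dk)$ must not be conflated, and that the $d=1$ branch requires using the constant $c(1,k)$ rather than the $d\ge 2$ formula (so that $c_1(1,k)^{-2/k}=1/2$ rather than $\Gamma^2(3/2)/(3/2)=\pi/6$), is a fair point of care that the paper's stated form of Theorem~\ref{thm:genlppppppp} glosses over.
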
 

This formulation is an immediate consequence of raising 
	\begin{align} \label{eq:IEPI}
	M(P^{(d)} X) \leq c(d,k) \prod_{i=1}^n M^{\gamma_i}(X_i)
	\end{align}
	 to the $-2/dk$ power and recalling definitions.  

Notice the pleasant dichotomy that emerges in the information theoretic formulation. The term $\left(\frac m n \right)^{\frac m k }$ dependents only on the dimension of the projection and is independent of $d$, while $\frac{\Gamma^{\frac 2 d}( 1 + \frac d 2)}{1+ \frac d 2}$ is determined complete by $d$ and is independent of dimension of the projection.


\subsection{The Integers}
The main result of Mattner and Roos \cite{MR08:a}, was the following
\begin{thm}\cite{MR08:a} \label{thm:MR08}
	When $\{Z_i\}_{i=1}^n$ are $\mathbb{Z}$ valued random variables, uniformly distributed on $\{0,1, \dots, l-1\}$, then for $n>2$
	\[
	M(Z_1 + \cdots + Z_n) < \sqrt{\frac{6}{\pi(l^2-1)n}}.
	\] 
\end{thm}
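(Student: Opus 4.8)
The plan is to prove Theorem~\ref{thm:MR08} by Fourier inversion on the circle, comparing the characteristic function of the $n$-fold convolution with a Gaussian of matching variance. Write $\phi(t)=\frac1l\sum_{j=0}^{l-1}e^{ijt}$ for the characteristic function of $Z_1$, so that $|\phi(t)|=\frac1l\bigl|\frac{\sin(lt/2)}{\sin(t/2)}\bigr|$, and set $\sigma^2=\Var(Z_1)=\frac{l^2-1}{12}$. Since $S_n:=Z_1+\cdots+Z_n$ is integer valued, inversion gives $\mathbb{P}(S_n=k)=\frac1{2\pi}\int_{-\pi}^{\pi}\phi(t)^ne^{-ikt}\,dt$, hence
\[
M(S_n)=\max_{k\in\mathbb{Z}}\mathbb{P}(S_n=k)\le\frac1{2\pi}\int_{-\pi}^{\pi}|\phi(t)|^{n}\,dt .
\]
As $\frac1{2\pi}\int_{\mathbb{R}}e^{-n\sigma^2t^2/2}\,dt=\sqrt{\tfrac{6}{\pi(l^2-1)n}}$, it suffices to show that for all $n\ge 3$ and $l\ge 2$ the circle integral $\int_{-\pi}^{\pi}|\phi|^n$ is \emph{strictly} smaller than $\int_{\mathbb{R}}e^{-n\sigma^2t^2/2}$.

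For the central region I would use the Euler product $\frac{\sin x}{x}=\prod_{k\ge1}\bigl(1-\frac{x^2}{k^2\pi^2}\bigr)$, giving $\log\frac{\sin x}{x}=-\sum_{m\ge1}c_mx^{2m}$ with $c_m=\zeta(2m)/(m\pi^{2m})>0$; applying this with $x=lt/2$ and $x=t/2$ and subtracting yields
\[
\log|\phi(t)|=\log\frac{\sin(lt/2)}{lt/2}-\log\frac{\sin(t/2)}{t/2}=-\sum_{m\ge1}c_m(l^{2m}-1)(t/2)^{2m}\le-\frac{l^2-1}{24}\,t^2
\]
on $|t|<2\pi/l$, with strict inequality for $0<|t|<2\pi/l$ (at $|t|=2\pi/l$ the left side is $-\infty$). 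Hence $|\phi(t)|\le e^{-\sigma^2t^2/2}$ on $\{|t|\le 2\pi/l\}$, strictly off the origin. When $l=2$ this covers all of $[-\pi,\pi]$ and the theorem is immediate. For $l\ge3$ one must still handle the tail $2\pi/l\le|t|\le\pi$, where pointwise Gaussian domination fails near $t=\pm\pi$ (there $|\phi|$ is of order $1/l$ while the Gaussian is exponentially small in $l$).

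On the tail, $\sin(lt/2)$ vanishes at the points $2\pi m/l$, so $|\phi|$ is a union of ``bumps'' of decreasing amplitude; putting $t=\pi-s$ shows the bump adjacent to $t=\pi$ obeys $|\phi(\pi-s)|\le\frac1l\cdot\frac{|\cos(ls/2)|}{\cos(s/2)}\le\frac1l$, decaying to $0$, while elsewhere $|\phi(t)|\le\frac1{l\sin(t/2)}$ is bounded away from $1$. Thus $T_n:=\int_{2\pi/l\le|t|\le\pi}|\phi|^n$ is of order at most $l^{-n}n^{-1/2}$, whereas the slack forfeited centrally, $D_n:=\int_{|t|\le2\pi/l}\bigl(e^{-n\sigma^2t^2/2}-|\phi|^n\bigr)$, is of order $n^{-3/2}$ (the fourth cumulant of $Z_1$ equals $-\frac{l^4-1}{120}<0$), and $G_n:=\int_{|t|>2\pi/l}e^{-n\sigma^2t^2/2}$ is exponentially small. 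Since $\int_{-\pi}^{\pi}|\phi|^n\le\int_{\mathbb{R}}e^{-n\sigma^2t^2/2}-G_n-D_n+T_n$, the theorem reduces to $D_n+G_n>T_n$. Asymptotically this is clear because $T_n/D_n$ is of order $n\,l^{-n}\to0$; the main obstacle is tracking the implied constants accurately enough to keep the comparison uniform in $n$ and $l$ while retaining the sharp constant $\sqrt{6/\pi}$. The margin is thinnest around $(n,l)=(3,3)$, where the bound holds with only a few percent to spare, so there one needs a sharp quantitative estimate of $D_n$ — or, equivalently, a direct check on the explicit probability generating function $\bigl(\tfrac{1-x^l}{l(1-x)}\bigr)^n$; this careful bookkeeping is precisely the content of Mattner and Roos's argument.
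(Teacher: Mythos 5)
The paper does not prove Theorem~\ref{thm:MR08}: it is stated with the citation \cite{MR08:a} and used as a black box (together with Lemma~\ref{thm:discrearrange}) in the subsequent discrete result. So there is no paper proof to compare against. Your strategy --- bounding $M(S_n)$ by $\frac1{2\pi}\int_{-\pi}^\pi|\phi|^n$ and comparing with a variance-matched Gaussian, with the Euler-product expansion giving strict Gaussian domination of $|\phi|$ on $|t|\le 2\pi/l$ --- is the standard and natural route, and the central-region analysis (including the clean disposal of $l=2$, where $2\pi/l=\pi$ and the argument closes immediately) is correct.

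The problem is that what you submit is, by your own description, not a proof for $l\ge3$. The entire quantitative content of the theorem --- that the central deficit $D_n$ plus the Gaussian tail $G_n$ dominates the side-lobe contribution $T_n$ with a strict inequality uniformly over $n>2$ and $l\ge 3$, and that the sharp constant $\sqrt{6/\pi}$ survives --- is deferred with the remark that it ``is precisely the content of Mattner and Roos's argument,'' and you flag the tight case $(n,l)=(3,3)$ as unhandled. In addition, the claimed rate $T_n=O(l^{-n}n^{-1/2})$ is incorrect: while $|\phi(\pi)|\le 1/l$, the supremum of $|\phi|$ on $[2\pi/l,\pi]$ occurs at the first side-lobe past $2\pi/l$, where $|\phi|\approx\frac{1}{l\sin(\pi/l)}$ stays bounded away from $0$ (tending to $1/\pi$ as $l\to\infty$), so the correct bound is of order $l^{-1}a^{-n}$ for a fixed $a>1$, not $l^{-n}$. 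This does not spoil the qualitative conclusion that the tail is negligible for large $n$, but it is exactly the sort of constant you would need to track carefully to recover the theorem as stated, so the gap is real.
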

Geometrically this is a precise upper bound on the number of a points in the discrete ``$n$-box" that can occupy a plane orthogonal to the $(1,\dots,1)$ vector.
We have the following immediate extension. 
\begin{thm}
	For $X_i$ integer valued random variables satisfying $M(X_i) \leq \frac 1 k$ for an integer $k$ when considered with respect to the usual counting measure, then for $n>2$,
	\[
	M(X_1 + \cdots + X_n) < \sqrt{\frac{6}{\pi(k^2-1)n}}
	\]
\end{thm}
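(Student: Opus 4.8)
The plan is to obtain the statement as the composition of three results already established above: the extreme-point reduction of Theorem~\ref{thm:MainLC} (or its non-compact extension in Section~\ref{sec:more}), the discrete rearrangement inequality of Lemma~\ref{thm:discrearrange}, and the Mattner--Roos bound of Theorem~\ref{thm:MR08}. We may assume $k\ge 2$, since for $k=1$ the hypothesis $M(X_i)\le 1$ is vacuous and the right-hand side is not finite.

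The first step is to apply Theorem~\ref{thm:MainLC} on $E_i=\mathbb{Z}$ with the counting reference measure and the caps $m_i=1/k$. Since $M(X_i)\le 1/k$, the law of each $X_i$ lies in $\mathcal{P}_{1/k}(\mathbb{Z})$, and the functional $(\mu_1,\dots,\mu_n)\mapsto M\bigl(\varphi_*(\mu_1\otimes\cdots\otimes\mu_n)\bigr)$, with $\varphi(x_1,\dots,x_n)=x_1+\cdots+x_n$, is coordinate-convex and weak-$*$ lower semicontinuous by Lemma~\ref{lem:push} applied to the convex lower semicontinuous functional $M$. The relevant observation is that for the counting measure with $m=1/k$ and $k$ an integer, the leftover atom of weight $c=1-m\lfloor 1/m\rfloor=0$ vanishes, so $\mathcal{P}_{1/k}^{*}(\mathbb{Z})$ is exactly the set of uniform distributions on $k$-point subsets of $\mathbb{Z}$ (this is precisely Example~\ref{ex:integerrogozin}, now with the common size $k$). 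Hence
\[
M(X_1+\cdots+X_n)\ \le\ \sup_{U}\, M(U_1+\cdots+U_n),
\]
the supremum taken over all independent $U=(U_1,\dots,U_n)$ with each $U_i$ uniform on a $k$-element subset of $\mathbb{Z}$.

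The second step is Lemma~\ref{thm:discrearrange} with all $l_i=k$: for every such $U$ one has $M(U_1+\cdots+U_n)\le M(Z_1+\cdots+Z_n)$, where $Z_1,\dots,Z_n$ are independent and uniform on $\{0,1,\dots,k-1\}$. Combining with the previous display gives $M(X_1+\cdots+X_n)\le M(Z_1+\cdots+Z_n)$. The third and final step is Theorem~\ref{thm:MR08} with $l=k$: since $n>2$ and the $Z_i$ are i.i.d.\ uniform on $\{0,\dots,k-1\}$, one has $M(Z_1+\cdots+Z_n)<\sqrt{6/(\pi(k^2-1)n)}$, and chaining the inequalities completes the proof.

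There is no genuinely difficult step here: the content lives in the three cited results and this theorem is their assembly. The points that need care are all bookkeeping: (i) one must take the caps $m_i=1/k$ rather than the sizes $\lfloor M(X_i)^{-1}\rfloor$ that appear in the second bound of Example~\ref{ex:integerrogozin}, so that the reduced extremizers are uniform on sets of a single common size $k$ — this is exactly what lets Lemma~\ref{thm:discrearrange} and Theorem~\ref{thm:MR08}, both formulated for a common $l$, be applied directly (alternatively one could re-run the argument once more to descend from arbitrary $l_i\ge k$ to $k$, but fixing the cap at the outset is cleaner); (ii) the degenerate case $k=1$ is handled separately as above; (iii) for unbounded $X_i$ one invokes the non-compact version of the extreme-point theorem from Section~\ref{sec:more} rather than Lemma~\ref{lem:MainLC} itself.
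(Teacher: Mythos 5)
Your proposal is correct and follows exactly the paper's own proof: Theorem~\ref{thm:MainLC} to reduce to uniform distributions on $k$-point sets, Lemma~\ref{thm:discrearrange} to pass to the contiguous block $\{0,\dots,k-1\}$, and Theorem~\ref{thm:MR08} for the final bound. The extra bookkeeping you flag (choosing the caps $m_i=1/k$ up front, the degenerate $k=1$ case, the non-compact reduction) is sound and makes the argument a bit more careful than the paper's terse one-line proof, but the route is the same.
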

\begin{proof}
	By \ref{thm:MainLC}, $M(X_1 + \cdots + X_n) \leq \sup_U M(U_1 + \cdots + U_n)$.  Applying \ref{thm:discrearrange} we have, $M(U_1 + \cdots + U_n) \leq M(Z_1 + \cdots + Z_n),$ from which our result follows from \ref{thm:MR08}.
\end{proof}

\label{sec:conc}

\nocite{MS73:2}

\bibliographystyle{plain}
\bibliography{$HOME/Dropbox/MAIN/WRITINGS/CommonResources/pustak}

\end{document}